\numberwithin{equation}{section}
\numberwithin{figure}{section}
\newtheorem{thm}{Theorem}[section]
\newtheorem{cor}[thm]{Corollary}
\newtheorem{lem}[thm]{Lemma}
\newtheorem{prop}[thm]{Proposition}
\newtheorem{defn}[thm]{Definition}
\newtheorem{thmx}{Theorem}
\newcommand{\comment}[1]{}
\newcommand{\dS}{d_{\mathrm{S}}}
\newcommand{\dK}{d_{\mathrm{K}}}
\newcommand{\Sing}{\mathrm{Sing}}
\newcommand{\Reg}{\mathrm{Reg}}
\newcommand{\htop}{h_{\mathrm{top}}}
\newcommand{\ZZ}{\mathbb{Z}}
\newcommand{\RR}{\mathbb{R}}
\newcommand{\tl}{\tilde{\lambda}}
\newcommand{\ph}{\varphi}
\newcommand{\PPP}{\mathcal{P}}
\newcommand{\SSS}{\mathcal{S}}
\newcommand{\GGG}{\mathcal{G}}
\newcommand{\RRR}{\mathcal{R}}
\newcommand{\CCC}{\mathcal{C}}
\newcommand{\BBB}{\mathcal{B}}
\newcommand{\AAA}{\mathcal{A}}
\newcommand{\III}{\mathcal{I}}
\newcommand{\JJJ}{\mathcal{J}}
\newcommand{\MMM}{\mathcal{M}}
\newcommand{\KKK}{\mathcal{K}}
\newcommand{\DDD}{\mathcal{D}}
\newcommand{\XXX}{\mathcal{X}}
\newcommand{\sing}{\mathrm{Sing}}
\newcommand{\eps}{\epsilon}
\newcommand{\vg}{\varphi^u}
\newcommand{\mureg}{\mu^{\mathrm{Reg}}}
\newcommand{\wiggle}{\sigma}
\DeclareMathOperator{\inj}{inj}
\newcommand{\FFF}{\mathcal{F}}
\newcommand{\UUU}{\mathcal{U}}
\newcommand{\Per}{\mathrm{Per}}
\newcommand{\overl}{\overline\lambda}
\newcommand{\underl}{\underline\lambda}
\DeclareMathOperator{\supp}{supp}
\DeclareMathOperator{\tr}{tr}
\begin{document}
\title[Equilibrium states for rank 1 geodesic flows] {Unique equilibrium states for geodesic flows in nonpositive curvature}
\author{K.~Burns, V.~Climenhaga,  T.~Fisher, and D.~J.~Thompson}
\address{K.~Burns, Department of Mathematics, Northwestern University, Evanston, IL 60208, \emph{E-mail address:} \tt{burns@math.northwestern.edu}}
\address{ V.~Climenhaga,  Department of Mathematics, University of Houston, Houston, TX 77204, \emph{E-mail address:} \tt{climenha@math.uh.edu}}
\address{T.~Fisher, Department of Mathematics, Brigham Young University, Provo, UT 84602, \emph{E-mail address:} \tt{tfisher@mathematics.byu.edu}}
\address{D.~J.~Thompson, Department of Mathematics, The Ohio State University, Columbus, OH 43210, \emph{E-mail address:} \tt{thompson@math.osu.edu}}

\thanks{V.C.\ is supported by NSF grants DMS-1362838 and 
DMS-1554794.  T.F.\ is supported by Simons Foundation grant \# 239708. D.T.\ is supported by NSF grant DMS-$1461163$.}

\subjclass[2010]{37D35, 37D40, 37C40. 37D25}
\date{\today}
\keywords{Equilibrium states, geodesic flow, topological pressure}
\commby{}

\begin{abstract}
We study  geodesic flows over compact rank 1 manifolds and prove that sufficiently regular potential functions have unique equilibrium states if the singular set does not carry full pressure.  In dimension 2, this proves uniqueness for scalar multiples of the geometric potential on the interval $(-\infty,1)$, which is optimal.  In higher dimensions, we obtain the same result on a neighborhood of 0, and give examples where uniqueness holds on all of $\RR$.  For general potential functions $\varphi$, we prove that the pressure gap holds whenever $\varphi$ is locally constant on a neighborhood of the singular set, which allows us to give examples for which uniqueness holds on a $C^0$-open and dense set of H\"older potentials.
\end{abstract}

\maketitle
\setcounter{tocdepth}{1}

\section{Introduction}\label{s.intro}

We study uniqueness of equilibrium states for the geodesic flow over a compact rank 1 manifold with nonpositive sectional curvature.  In negative curvature, geodesic flow is Anosov and every H\"older potential has a unique equilibrium state.  
In nonpositive curvature, the flow is nonuniformly hyperbolic and may have phase transitions; the challenge is to exhibit a class of potential functions where uniqueness holds.

The first major result in this direction was Knieper's proof of uniqueness of the measure of maximal entropy using Patterson--Sullivan measures \cite{knieper98}. 
We use different techniques, inspired by Bowen's criteria to show uniqueness of equilibrium states \cite{Bow75}. This approach has been generalized by the second and fourth named authors, giving uniqueness of equilibrium states under non-uniform versions of Bowen's hypotheses  \cite{CT4}.  
We give conditions under which these techniques can be applied to geodesic flows on rank 1 manifolds, and demonstrate that these conditions are satisfied for a large class of potential functions.

Throughout the paper, $M= (M^n,g)$ will be a closed connected $C^\infty$ Riemannian manifold with nonpositive sectional curvature and dimension $n$, and  $\mathcal{F}=(f_t)_{t\in\mathbb{R}}$ will denote the geodesic flow on the unit tangent bundle $T^1M$.  There are two continuous invariant subbundles $E^s$ and $E^u$ of $TT^1M$,  each of dimension $n-1$, which are orthogonal to the flow direction $E^c$ in the natural Sasaki metric; these can be interpreted as normal vector fields to the stable and unstable horospheres.  If the curvature is strictly negative, $\FFF$ is Anosov and $TT^1M = E^s \oplus E^c \oplus E^u$ is the Anosov splitting.

In nonpositive curvature, $E_v^s$ and $E_v^u$ may intersect nontrivially. The \emph{rank} of a vector $v\in T^1M$ is $1 + \dim (E^s_v \cap E^u_v)$, which is the dimension of the space of parallel Jacobi vector fields for the geodesic through $v$. 
The \emph{rank of $M$} is the minimum rank over all vectors in $T^1M$.  We assume that $M$ has rank $1$.  
For a rank $1$ manifold, the \emph{regular set}, denoted $\Reg$, is the set of $v\in T^1M$ with rank $1$. The set $\Reg$ is dense since it is open and invariant, and the geodesic flow is topologically transitive. The \emph{singular set}, denoted $\mathrm{Sing}$, is the set of vectors whose rank is larger than 1. If $\Sing$ is empty, then the geodesic flow is Anosov; this includes the negative curvature case. The case when $\Sing$ is nonempty is a prime example of nonuniform hyperbolicity.

We study uniqueness of equilibrium states for the geodesic flow $\mathcal F$. An \emph{equilib\-ri\-um state} for a continuous function $\varphi\colon T^1M \to \RR$, which we call a \emph{potential function}, is an invariant Borel probability measure that maximizes the free energy $h_{\mu}(\FFF) + \int \varphi \,d \mu$, where $h_{\mu}(\FFF)$ is the measure-theoretic entropy with respect to the geodesic flow.  This maximum is denoted by $P(\varphi)$ and is called the \emph{topological pressure} of $\varphi$ with respect to the geodesic flow $\FFF$. In the case when $\varphi=0$, the topological pressure is the topological entropy $\htop(\FFF)$. Since $\FFF$ is entropy expansive, 
equilibrium states exist for any continuous function, but uniqueness is a subtle question beyond the uniformly hyperbolic setting.

The \emph{geometric potential} 
$\vg(v)=-\lim_{t\to 0} \frac{1}{t}\log \det(df_t|_{E^u_v})$
and its scalar multiples $q\vg$ ($q\in \RR$) are of particular interest. When $q=1$, the Liouville measure $\mu_L$ is an equilibrium state for $\vg$; in the Anosov setting, it is the unique equilibrium state. When $q=0$, equilibrium states for $q \vg$ are measures of maximal entropy; uniqueness of the measure of maximal entropy in rank 1  was proved by Knieper \cite{knieper98}. In the case of surfaces without focal points, this result has been established recently using different methods by Gelfert and Ruggiero \cite{GR}.
 When $M$ is a rank 1 surface, the family $q \vg$ contains geometric information about the spectrum of the maximum Lyapunov exponent \cite{BG}. 

We now state and discuss our main theorems. Let $P(\Sing, \varphi)$ denote the topological pressure of the potential $\varphi|_\Sing$ with respect to the geodesic flow restricted to the singular set (setting $P(\sing, \varphi)=-\infty$ if $\Sing=\emptyset$).

\begin{thmx}\label{t.multiples}
Let $\FFF$ be the geodesic flow over a closed rank 1 manifold $M$ and let $\varphi\colon T^1M\to \RR$ be $\varphi=q\vg$ or be H\"older continuous. If $P(\sing, \varphi)<P(\varphi)$, then $\varphi$ has a unique equilibrium state $\mu$. This equilibrium state 
is hyperbolic, fully supported, and is the weak$^\ast$ limit of weighted regular closed geodesics; see \S\ref{s.periodicpressure}.
\end{thmx}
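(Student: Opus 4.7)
The plan is to apply the abstract uniqueness framework of Climenhaga--Thompson \cite{CT4}, which reduces the problem to exhibiting a decomposition of the space of orbit segments $T^1M \times [0,\infty)$ into collections $(\PPP,\GGG,\SSS)$ such that: (i) every orbit segment factors as a prefix in $\PPP$, a core in $\GGG$, and a suffix in $\SSS$; (ii) the good collection $\GGG$ has specification at all scales; (iii) $\varphi$ has the Bowen property on $\GGG$; and (iv) the pressures carried by $\PPP$ and $\SSS$ are strictly less than $P(\varphi)$. The hypothesis $P(\Sing,\varphi)<P(\varphi)$ is precisely what will power condition (iv).

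The natural decomposition is governed by proximity to $\Sing$. Fix a small scale $\eta>0$ and declare $(v,t)\in\GGG(\eta)$ when both the forward orbit from $v$ and the backward orbit from $f_t v$ exit an $\eta$-neighborhood of $\Sing$ within bounded time. The prefix $\PPP$ (respectively, suffix $\SSS$) then consists of initial (respectively, terminal) maximal sub-segments spent inside this neighborhood. To verify (iv), I would show that the exponential growth rate of segments staying $\eta$-close to $\Sing$ is controlled by the pressure on $\Sing$ itself: Birkhoff sums along such segments are close to those on $\Sing$ by continuity of $\varphi$, while the entropic contribution is bounded using upper semi-continuity, so that as $\eta\to 0$ one recovers $P(\Sing,\varphi)$, which by hypothesis is strictly below $P(\varphi)$.

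Specification at all scales for $\GGG$ should follow from the fact that segments with both endpoints uniformly bounded away from $\Sing$ exhibit a quantitative form of hyperbolicity: the horospherical expansion is bounded below in terms of the distance to $\Sing$. Two such segments can then be concatenated by a connecting orbit of bounded length, exploiting the geometric structure of stable and unstable horospheres developed for rank $1$ manifolds by Knieper \cite{knieper98} and others. The Bowen property on $\GGG$ is relatively clean for H\"older $\varphi$ once this uniform hyperbolicity is in hand. For the family $\varphi=q\vg$, it requires a distortion estimate for the unstable Jacobian $\det(df_t|_{E^u})$ along pairs of orbits in $\GGG$ that remain Bowen-close; the uniform separation from $\Sing$ is what ensures the Jacobian does not degenerate.

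The main obstacle I anticipate is twofold. First, specification \emph{at all scales}, rather than at some fixed scale, is a strong demand and will need a careful geometric argument exploiting the rank $1$ hypothesis and the control of horospherical geometry on $\Reg$. Second, the Bowen property for $q\vg$ is delicate because $\vg$ is merely continuous on $T^1M$, with second fundamental forms of horospheres blowing up near $\Sing$; the key is obtaining distortion bounds that depend only on the distance of the endpoints from $\Sing$. Once these three ingredients are assembled, \cite{CT4} yields a unique equilibrium state $\mu$. The remaining conclusions are standard outputs of this framework: $\mu(\Reg)=1$ follows because any $\Sing$-supported invariant measure contributes at most $P(\Sing,\varphi)<P(\varphi)$ to the variational principle; full support follows from specification, which forces every nonempty open set to have positive $\mu$-mass; and the description of $\mu$ as a weak$^\ast$ limit of weighted regular periodic orbit measures is provided by the periodic orbit machinery that accompanies the abstract theorem.
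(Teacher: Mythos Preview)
Your overall strategy---applying the Climenhaga--Thompson machinery via a decomposition $(\PPP,\GGG,\SSS)$ governed by proximity to $\Sing$---is the paper's strategy. However, the decomposition you describe is not the one that works, and the difference matters.

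You take $\PPP$ and $\SSS$ to be maximal initial and terminal sub-segments lying in an $\eta$-neighborhood of $\Sing$, so that $\GGG$ consists of segments whose endpoints lie outside this neighborhood. The problem is that such a segment may still spend arbitrarily long stretches near $\Sing$ in its interior, where hyperbolicity degenerates. The Bowen property for H\"older $\varphi$ on $\GGG$ requires exponential contraction of $d^s(f_\tau v,f_\tau w)$ along the whole segment, and this fails if the segment lingers near $\Sing$ in the middle. Endpoint control is enough for specification (the paper in fact proves specification for the larger class $\CCC(\eta)$ of segments starting and ending in $\Reg(\eta)$), but it is not enough for the Bowen property.

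The paper's fix is to work not with metric distance to $\Sing$ but with the function $\lambda(v)$, the minimum eigenvalue of the second fundamental form of the unstable horosphere at $v$. This quantifies instantaneous hyperbolicity and can vanish on $\Reg$, so it is genuinely finer than distance to $\Sing$. The good collection is
\[
\GGG(\eta) = \Big\{(v,t): \int_0^\tau \lambda(f_s v)\,ds \geq \eta\tau \ \text{and}\ \int_0^\tau \lambda(f_{-s}f_t v)\,ds \geq \eta\tau \ \text{for all } \tau\in[0,t]\Big\},
\]
while $\PPP=\SSS=\BBB(\eta)$ consists of segments with $\int_0^T \lambda(f_t v)\,dt < \eta T$. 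The ``for all $\tau$'' clause is exactly what propagates uniform contraction estimates along the full segment (Lemma~\ref{lem:Geta}), yielding the Bowen property. The pressure bound on $\BBB(\eta)$ proceeds by showing that any weak$^*$ limit of empirical measures along $\BBB(\eta)$ has $\int\lambda\,d\mu\leq\eta$, and that an invariant measure with $\int\lambda\,d\mu=0$ must be supported on $\Sing$ (Corollary~\ref{c.singular}); upper semi-continuity of entropy then gives $P([\BBB(\eta)],\varphi)\to P(\Sing,\varphi)$ as $\eta\to 0$.

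Two smaller points. First, you omit the expansivity hypothesis $P^\perp_{\mathrm{exp}}(\varphi)<P(\varphi)$ required by \cite{CT4}; the paper handles this by showing $\mathrm{NE}(\eps)\subset\Sing$ via the flat strip theorem, so $P^\perp_{\mathrm{exp}}(\varphi)\leq P(\Sing,\varphi)$. Second, the second fundamental forms of horospheres do not blow up near $\Sing$; they degenerate to zero (this is precisely $\lambda\to 0$). The difficulty for $q\vg$ is loss of uniform hyperbolicity, and the paper establishes the Bowen property on $\GGG(\eta)$ for $\vg$ directly via Riccati equation estimates rather than through any H\"older regularity of $\vg$, which remains unknown.
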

The hypothesis $P(\sing, \varphi)<P(\varphi)$ is a sharp condition for having a unique equilibrium state which is fully supported; if $P(\sing, \varphi)=P(\varphi)$, then $\varphi$ has at least one equilibrium state supported on $\Sing$.  
We remark that an ergodic $\mu$ is hyperbolic if and only if $\mu(\Reg)=1$ (see Corollary \ref{c.hyperbolic}), and that the proof of equidistribution for weighted regular closed geodesics in \S\ref{s.geodgeneral} also establishes counting estimates, which are of independent interest.

The proof of Theorem \ref{t.multiples} uses general machinery developed by the second and fourth authors~\cite{CT4}, which was inspired by Bowen's work on uniqueness using the \emph{expansivity} and \emph{specification} properties \cite{Bow75} and its extension to flows by Franco \cite{Franco}. The results in \cite{CT4} use weaker versions of these properties which are formulated at the level of finite-length orbit segments; see \S\ref{s.abstract}. This allows us to avoid issues with asymptotic behavior of orbits that would be hard to control in our setting. The idea is that every orbit segment can be decomposed into `good' and `bad' parts, where the `good' parts satisfy Bowen's conditions, and the `bad' parts carry smaller topological pressure than the whole system.

Bowen's result applies to potentials satisfying a regularity condition that we call the \emph{Bowen property}; our result uses the non-uniform Bowen property from \cite{CT4}, which holds here for all H\"older potentials. Verifying this condition for the potentials $q \vg$ is a significant point in our argument; see \S\ref{sec:Bowen}. It is not  currently known if horospheres are $C^{2+\alpha}$ for rank 1 manifolds in dimension greater than $2$, which is necessary for H\"older continuity of the  unstable distribution. Even in dimension $2$, where horocycles are known to be $C^{2+\frac12}$ by \cite{GW99}, H\"older continuity of the unstable distribution, and thus $\vg$, is an open question.

For the class of potentials under consideration, Theorem \ref{t.multiples} reduces the problem of uniqueness of equilibrium states to checking if the pressure gap $P(\Sing,\ph) < P(\ph)$ holds. The following result establishes this gap,  and hence uniqueness of equilibrium states,  for a large class of H\"older continuous potentials.

\begin{thmx}\label{thm:pressure-gap}
With $\FFF$ and $M$ as above, let $\ph\colon T^1 M \to \RR$ be a continuous function that is locally constant on a neighborhood of $\Sing$.
Then $P(\Sing,\ph) < P(\ph)$.
\end{thmx}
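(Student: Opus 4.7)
The plan proceeds in three steps. First, a reduction: since $\Sing$ is compact with finitely many $\FFF$-invariant connected components $S_i$, and $\varphi$ is locally constant on an open neighborhood of $\Sing$, we may argue componentwise (using $P(\Sing,\varphi) = \max_i P(S_i,\varphi)$) and, after subtracting the appropriate constant, assume $\varphi\equiv 0$ on an open neighborhood $U$ of $S_i$. Continuity then forces $\varphi\equiv 0$ on $\overline U$, and the goal becomes $P(\varphi) > h_s := \htop(\FFF|_{S_i})$.

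Second, a variational reformulation: by the variational principle, it suffices to exhibit an $\FFF$-invariant probability measure $\nu$ with $h_\nu(\FFF)+\int\varphi\,d\nu > h_s$. The natural candidate is a measure supported in the maximal invariant set $\Lambda_U := \{x\in T^1M : f_s(x)\in\overline U\text{ for all }s\in\RR\}\subset\overline U$. Since $\varphi\equiv 0$ on $\overline U$, any such $\nu$ satisfies $\int\varphi\,d\nu = 0$, reducing the problem to proving $\htop(\Lambda_U) > h_s$. The measure $\nu$ may then be taken as a measure of maximal entropy of $\FFF|_{\Lambda_U}$, which exists by upper semicontinuity of the entropy map (a consequence of the entropy expansiveness of $\FFF$ noted in the excerpt).

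Third, the core comparison $\htop(\Lambda_U) > h_s$. The key input is Knieper's entropy gap $\htop(\FFF) > h_s$, combined with the rank-1 accumulation of hyperbolic closed geodesics on the singular set. The plan is to construct a hyperbolic horseshoe $\Lambda'\subset U\cap\Reg$ with $\htop(\Lambda')>h_s$ by applying Katok's horseshoe theorem to Knieper's measure of maximal entropy together with a localization/closing argument near $S_i$. Since $S_i\cup\Lambda'\subset\Lambda_U$, this yields $\htop(\Lambda_U)\geq\htop(\Lambda')>h_s$.

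The main obstacle is this localization step: Knieper's theorem produces high-entropy horseshoes \emph{somewhere} in $T^1M$, but not a priori inside the prescribed neighborhood $U$. Forcing the horseshoe into $U$ requires rank-1-specific tools, such as density of hyperbolic periodic orbits near $\Sing$, quantitative shadowing/closing, and the structure of Knieper's Patterson--Sullivan construction. If direct localization fails, the fallback is the specification/orbit-segment machinery developed in the paper: glue long shadowing segments of singular orbits (which lie in $U$, contributing zero to the Birkhoff sum) with short hyperbolic excursions into the regular set, count the resulting orbit families, and balance the entropy gain against the Birkhoff cost of excursions (which scales with $\|\varphi\|_\infty$ on $T^1M\setminus U$).
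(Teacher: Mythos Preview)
Your main line of attack---reducing to $\htop(\Lambda_U) > h_s$ and then localizing a Katok horseshoe inside $U$---does not work, and the obstacle you flag is fatal rather than technical. There is no reason for the maximal invariant set in a small neighborhood of $\Sing$ to carry more entropy than $\Sing$ itself. Take the simplest case: a rank~1 surface where $\Sing$ is a single closed flat geodesic and $U$ is a thin tube around it. Any orbit that stays in $\overline U$ for all time must be the singular orbit or asymptotic to it, so $\Lambda_U = \Sing$ up to measure and $\htop(\Lambda_U)=0=h_s$. The high-entropy orbits you need cannot stay in $\overline U$; they must repeatedly leave it. Hence the reduction in your second step already throws away the mechanism that produces the gap. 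Relatedly, invoking Knieper's entropy gap as an input is circular: the case $\varphi\equiv 0$ of the theorem \emph{is} Knieper's gap, and the paper advertises this as the first direct proof.

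Your fallback is the correct idea and is what the paper actually does, but the execution is subtle and not merely ``glue and count''. The paper first builds maps $\Pi_n\colon \Sing\to\Reg$ that replace each singular segment $(v,n)$ by a regular segment $(\Pi_n v,n)\in\CCC(\eta)$ whose middle portion stays $\delta$-close to $\Sing$ (Theorem~\ref{thm:sing-to-reg}), and proves a uniform multiplicity bound for $\Pi_n$ (Proposition~\ref{prop:multiplicity}) so that separated singular sets push forward to essentially separated regular sets. Then, rather than finding a single horseshoe, it fixes a small parameter $\alpha$, partitions $[0,4NT]$ into $N$ slots, selects $\alpha N$ of them as ``surgery times'', and uses specification on $\CCC(\eta)$ to concatenate the replaced segments. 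The crucial point is that different surgery patterns are distinguishable from the resulting orbit because $\lambda(f_t w)$ is large only near surgery times (this is property~\eqref{eqn:JAJ}); this yields $\binom{N}{\alpha N}\approx e^{-\alpha\log\alpha\, N}$ genuinely separated families. The Birkhoff cost of the excursions is $O(\alpha N)$ because $\varphi$ is constant on $B(\Sing,2\delta)$ and the excursions occupy a fraction $\approx\alpha$ of the time. Taking $\alpha$ small makes $-\alpha\log\alpha$ dominate $\alpha Q$, giving the gap. None of this uses a pre-existing entropy gap.
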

The case $\ph=0$ recovers Knieper's result that the singular set has smaller entropy than the whole system.  In Knieper's work \cite{knieper98}, this was obtained a posteriori as a consequence of the uniqueness result. Knieper uses the Patterson-Sullivan construction to build a measure of maximal entropy $\mu$, and shows it is unique by exploiting properties of Busemann densities and other asymptotic geometry arguments.  It is built into the construction that $\mu(\Reg)=1$, and it thus follows that $\Sing$ has smaller entropy.  The argument presented here proceeds quite differently. It does not rely on the uniqueness of the measure of maximal entropy, and gives the first \emph{direct} constructive proof of the entropy gap. The main idea, which is explained in detail in \S \ref{s.entropygap}, is to approximate orbit segments in $\Sing$ by orbit segments in $\Reg$ with the specification property. We then use this property to build a collection of orbits with greater topological entropy than the singular set.  

We now state our results for the family of potentials $q \ph^u$. When $M$ is a surface, an easy argument provided in \S \ref{s.mainresults} shows that $P(\Sing, q \ph^u) =0$, and that $P(q \ph^u)>0$ for $q<1$. Thus, the following result is a corollary of Theorem \ref{t.multiples}.

\begin{thmx}\label{t.geometric}
If $M$ is a closed rank 1 surface, then the geodesic flow has a unique equilibrium state $\mu_q$ for the potential $q\vg$ for each $q\in(-\infty, 1)$,
and the function $q\mapsto P(q\vg)$ is $C^1$ on this interval.  
Each $\mu_q$ is hyperbolic, fully supported, and is the weak$^\ast$ limit of weighted regular closed geodesics.
\end{thmx}
It follows from work of Ledrappier, Lima, and Sarig \cite{LS, LLS} that these equilibrium states are Bernoulli, see \S\ref{s.mainresults}. For rank 1 surfaces, this uniqueness result is optimal;
any invariant measure supported on $\Sing$ is an equilibrium state for $ q \vg$ when $q\geq 1$. In higher dimensions, $\Sing$ can have positive entropy, but we can still exploit the entropy gap $\htop(\Sing) < \htop(\FFF)$. An easy argument, which we give in \S \ref{s.mainresults}, gives the following result on $q \vg$ for higher dimensional manifolds as a consequence of the entropy gap.

\begin{thmx}\label{t.highergeometric}
Let $\mathcal F$ be the geodesic flow for a closed rank 1 manifold. There exists $q_0>0$ such that the potential $q\vg$ has a unique equilibrium state $\mu_q$ for each $q\in (-q_0, q_0)$. The function $q \mapsto P(q \vg)$ is $C^1$ on $(-q_0, q_0)$.  
Each $\mu_q$ is hyperbolic, fully supported, and is the weak$^\ast$ limit of weighted regular closed geodesics.
\end{thmx}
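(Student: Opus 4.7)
The plan is to obtain Theorem~D as a direct corollary of Theorems~A and~B together with a continuity/convexity argument on the pressure function. By Theorem~A, once we have the pressure gap $P(\Sing, q\vg) < P(q\vg)$ for $q$ in an interval around $0$, uniqueness of the equilibrium state $\mu_q$ and its three stated properties (giving full measure to $\Reg$, full support, and realization as a weak$^\ast$ limit of weighted regular periodic orbits) follow immediately. So the task reduces to (i) establishing the pressure gap on a neighborhood of $0$, and (ii) upgrading uniqueness to $C^1$-regularity of $q\mapsto P(q\vg)$.

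For (i), the key input is the case $\ph \equiv 0$ of Theorem~B, which says $h_{\mathrm{top}}(\Sing) = P(\Sing, 0) < P(0) = h_{\mathrm{top}}(\FFF)$ (the zero potential is trivially locally constant near $\Sing$). Now I would observe that the maps $q\mapsto P(q\vg)$ and $q\mapsto P(\Sing, q\vg)$ are both continuous in $q$: each is convex on $\RR$ (as the Legendre transform of the appropriate entropy functional) and takes finite values on $\RR$ (the geometric potential $\vg$ is bounded since $E^u$ is continuous), hence continuous. From continuity and the strict inequality at $q=0$, there exists $q_0 > 0$ such that $P(\Sing, q\vg) < P(q\vg)$ for every $q \in (-q_0, q_0)$. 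Applying Theorem~A on this interval yields the unique equilibrium state $\mu_q$ with the desired properties.

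For (ii), I would use the standard convex-analytic fact that a convex function on an interval is differentiable at a point if and only if it has a unique supporting functional there, and this derivative equals that functional. Concretely, the subdifferential of $P$ at $q\vg$ (in the direction of $\vg$) consists of the values $\int \vg \, d\nu$ as $\nu$ ranges over equilibrium states for $q\vg$; since $\mu_q$ is the unique such measure, the one-sided derivatives of $q\mapsto P(q\vg)$ agree and equal $\int \vg \, d\mu_q$. To upgrade differentiability to $C^1$, note that uniqueness combined with upper semicontinuity of entropy on the compact set of invariant probability measures (the flow is entropy expansive, as noted in the introduction) forces $q\mapsto \mu_q$ to be weak$^\ast$ continuous on $(-q_0,q_0)$; hence $q \mapsto \int \vg \, d\mu_q$ is continuous, giving $P \in C^1((-q_0,q_0))$.

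The only step requiring any real care is the continuity of $q\mapsto P(\Sing, q\vg)$ together with the verification that the pressure gap truly is an open condition at $q=0$; once Theorems~A and~B are in hand this is routine, which is presumably why the authors describe the argument as easy. I don't anticipate a substantive obstacle beyond assembling these ingredients.
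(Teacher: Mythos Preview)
Your proposal is correct and follows essentially the same route as the paper: establish the pressure gap at $q=0$ via Theorem~B, use continuity of the two pressure functions to get the gap on an open interval, and then invoke Theorem~A. The only difference is cosmetic: for the $C^1$ claim the paper cites a result of Walters (upper semicontinuity of entropy plus uniqueness of equilibrium states implies $C^1$ pressure), whereas you unpack this argument directly via the subdifferential and weak$^\ast$ continuity of $q\mapsto\mu_q$; these are the same idea.
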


The entropy gap, and hence the $q_0$ provided by this theorem, may be arbitrarily small, see \S\ref{s.mainresults}.
 If $\htop(\Sing)=0$, we observe in \S\ref{s.mainresults} that the pressure gap holds on $(-q_0,1)$. In \S \ref{sec:heintze}, we give 
an example of a 3-dimensional $M$ with nonempty singular set for which the pressure
gap holds for all $q\in \RR$, and thus $q_0=\infty$. It is an open question whether the inequality $P(\Sing, q\vg)< P(q\vg)$ always holds for all $q \in (-\infty, 1)$ when $\dim(M)>2$.

As a further application, we prove in \S\ref{sec:genericity} that if the singular set is a finite union of periodic orbits, then our uniqueness results hold for $C^0$-generic H\"older potentials; this includes the case when $\dim M = 2$ and the metric is real analytic.



The outline of the paper is as follows.
In \S \ref{s.background}, we introduce background material, particularly the existence and uniqueness result from \cite{CT4}. In \S \ref{s.general}, we state our most general theorem on equilibrium states for geodesic flow,  Theorem \ref{t.geodgeneral}.
In \S\S \ref{s.spec}-\ref{s.geodgeneral}, we build up a proof of Theorem \ref{t.geodgeneral}. 
In \S \ref{sec:Bowen}, we investigate regularity of the potentials $q\vg$.
In \S \ref{s.entropygap}, we prove Theorem \ref{thm:pressure-gap}.
In \S \ref{s.mainresults}, we complete the proofs of Theorems \ref{t.multiples}, \ref{t.geometric}, and \ref{t.highergeometric}.
In \S \ref{s.examples}, we apply our results to some examples.

\section{Preliminaries}\label{s.background}

In this section, we review definitions and results concerning pressure, specification, expansivity, geometry, and hyperbolicity.

\subsection{Topological Pressure}

Let $X$ be a compact metric space, $\mathcal{F} =\{f_t\}$ a continuous flow on $X$,  and $\varphi\colon X\to \mathbb{R}$ a continuous function.  We denote the space of $\FFF$-invariant probability measures on $X$ by $\mathcal{M}(\mathcal{F})$, and note that $\mathcal{M}(\mathcal{F})=\bigcap_{t\in\mathbb{R}}\mathcal{M}(f_t)$.  We denote the space of ergodic $\FFF$-invariant probability measures on $X$ by $\mathcal{M}^e(\mathcal{F})$.

We recall the definition of the topological pressure of $\varphi$ with respect to $\mathcal{F}$, referring the reader to \cite{BR75, Wa} for more background.  For $\epsilon>0$ and $t>0$ the \emph{Bowen ball of radius $\epsilon$ and order $t$} is
\[
B_t(x,\epsilon)= \{ y\in M\mid d(f_s x, f_s y)<\epsilon\text{ for all }0\leq s\leq t\}.
\]
Given $\epsilon>0$ and $t\in[0, \infty)$, a set $E\subset X$ is \emph{$(t, \epsilon)$-separated} if for all distinct $x, y\in E$ we have $y\notin \overline{B_t(x, \epsilon)}$.

We write $\Phi(x,t) = \int_{0}^{t} \varphi(f_sx)\,ds$ for the integral of $\varphi$ along an orbit segment of length $t$.  Let
\begin{equation}\label{eqn:Lambda-sep}
\Lambda(\varphi,\epsilon, t) = \sup
\left\{ \sum_{x\in E} e^{\Phi(x, t)} \mid E\subset X \text{ is $(t,\epsilon)$-separated} \right\}.
\end{equation}
Then the \emph{topological pressure of $\varphi$ with respect to $\FFF$} is
\[
P(\mathcal{F}, \varphi) = \lim_{\epsilon\to 0} \limsup_{t\to\infty} \frac 1t \log \Lambda(\varphi,\epsilon, t).
\]
The dependence on $\mathcal{F}$ will usually be suppressed in the notation. 

The \emph{variational principle for pressure} 
states that if $X$ is a compact metric space and $\mathcal{F}$ is a continuous flow on $X$, then 
\[
P(\mathcal{F}, \varphi)=\sup_{\mu\in \mathcal{M}(\mathcal{F})}\left\{ h_{\mu}(\mathcal{F}) +\int \varphi \,d\mu\right\}.
\]
A measure achieving the supremum is an \emph{equilibrium state for $\varphi$}. If the entropy map $\mu \mapsto h_\mu$ is upper semi-continuous then equilibrium states exist for each continuous potential function. This is the case in our setting since the flow is $C^{\infty}$ 
\cite{sN89}; it also follows from entropy-expansivity which is proved in our setting in \cite{knieper98}.


\subsection{Criteria for uniqueness of equilibrium states}\label{s.abstract}

We review the general result proved by the second and fourth authors in~\cite{CT4} concerning the existence of a unique equilibrium state.   

Given a flow $(X,\mathcal{F})$, we think of $X\times [0,\infty)$ as the space of finite-length orbit segments by identifying $(x,t)$ with $\{ f_s(x) : 0\leq s< t\}$.  Given $\mathcal{C}\subset X\times [0, \infty)$ and $t\geq 0$ we let 
$\mathcal{C}_t=\{x\in X\, :\, (x,t)\in\mathcal{C}\}$.
The partition function associated to $\mathcal{C}$ is
\[
\Lambda(\mathcal{C}, \varphi, \delta, t)=\sup\bigg \{ \sum_{x\in E} e^{\Phi(x,t)} :  E\subset \mathcal{C}_t \text{ is }(t, \delta)\text{-separated}\bigg\}.
\]
When $\mathcal{C}=X\times [0,\infty)$ this reduces to \eqref{eqn:Lambda-sep}. 
The \emph{pressure of $\varphi$ on $\mathcal{C}$} is
\[
P(\mathcal{C}, \varphi)=\lim_{\delta\to0} \limsup_{t\to \infty} \frac{1}{t}\log \Lambda(\mathcal{C}, \varphi, \delta, t).
\]
For $\mathcal{C}=\emptyset$ we then define $P(\emptyset, \varphi)=-\infty.$

We can ask for the Bowen property and the specification property, defined below, to hold only on $\CCC$ rather than the whole space.

\begin{defn}\label{def:spec}
A collection of orbit segments $\mathcal{C}\subset X\times [0, \infty)$ has \emph{specification at scale $\rho>0$} if  there exists $\tau=\tau(\rho)$ such that for every $(x_1, t_1)$, $\dots, (x_N, t_N)\in \mathcal{C}$  there exist a point $y\in X$ and times $\tau_1, \dots, \tau_{N-1}\in[0, \tau]$ such that for $s_0=\tau_0=0$ and $s_j=\sum_{i=1}^j t_i + \sum_{i=1}^{j-1} \tau_i$, we have
\[
f_{s_{j-1} + \tau_{j-1}}(y)\in B_{t_j}(x_j, \rho)
\]
for every $j\in \{ 1,\dots, N\}$.  A collection $\mathcal{C}\subset X\times [0, \infty)$ has \emph{specification} if it has specification at all scales.
If $\mathcal{C}= X\times [0, \infty)$ has specification, then we say the flow has specification.
\end{defn}

The definition above extends the specification property for the flow originally studied by Bowen, see \cite{Franco, KH}, and is the property that is used in \cite{CT4}. 
In Theorem \ref{specgeoflow}, we prove a stronger version of this property for a suitable $\CCC$, in which 
the conclusion that `there exist a point $y$ and times $\tau_1, \dots, \tau_{N-1}\in[0, \tau]$' is replaced with the conclusion that `for \emph{every} collection of times $\tau_1, \ldots, \tau_{N-1}$ with $\tau_i \geq \tau$ for all $i$, there exists a point $y$'. That is, we are able to take all the transition times to be exactly $\tau$, or any length at least $\tau$ that we choose.
For our purposes, it is convenient to also use the notation $T_j= s_{j-1}+\tau_{j-1}$ for the time at which the orbit of $y$ is near $x_j$; see Figure \ref{fig:specification} for the relationship between the various times.

\begin{figure}[htbp]
\includegraphics[width=\textwidth]{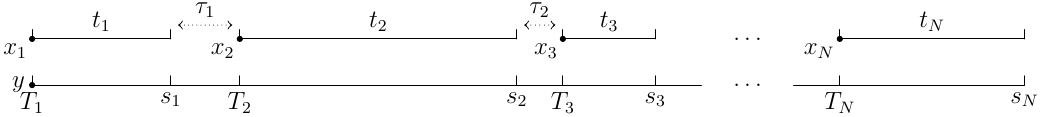}
\caption{Book-keeping in the specification property.}
\label{fig:specification}
\end{figure}

\begin{defn}\label{d.bowen}
We say that $\varphi\colon X\rightarrow \mathbb{R}$ has the \emph{Bowen property on $\mathcal{C}\subset X\times [0, \infty)$} if there are $\eps, K> 0$ such that for all $(x,t)\in \CCC$ and $y\in B_t(x,\eps)$, we have 
$
|\Phi(x,t)-\Phi(y,t)|\leq K.
$
\end{defn}
If $\varphi$ has the Bowen property on $\CCC = X \times [0, \infty)$, then our definition agrees with the original definition of Bowen.

\begin{defn}\label{def.decomp} 
A \emph{decomposition for $X\times [0, \infty)$} consists of three collections $\PPP, \GGG, \SSS\subset X\times [0, \infty)$ for which there exist three functions $p, g, s\colon X\times [0, \infty)\rightarrow [0, \infty)$ such that for every $(x,t)\in X\times [0, \infty)$, the values $p=p(x,t)$, $g=g(x,t)$, and $s=s(x,t)$ satisfy $t=p+g+s$, and 
\[
(x,p)\in \PPP,\quad
(f_p(x), g)\in \GGG,\quad
(f_{p+g}(x), s)\in \SSS.
\]
\end{defn}

The conditions we are interested in depend only on the collections $(\PPP, \GGG, \SSS)$ rather than the functions $p$, $g$, $s$. However, we work with a fixed choice of $(p,g,s)$ for the proof of the abstract theorem to apply.

We will construct a decomposition $(\PPP, \GGG, \SSS)$ such that  $\GGG$ has specification, the function $\varphi$ has the Bowen property on $\GGG$, and the pressure on $[\PPP]\cup [\SSS]$ is less than the pressure of the entire system, where
\[
[\mathcal{P}]:=
\{(x,n)\in X\times \mathbb{N} : (f_{-s}x, n+s+t)\in \mathcal{P}\textrm{ for some }s, t\in [0,1]\}
\]
and similarly for $[\SSS]$.
The reason that we control the pressure of $[\PPP]\cup[\SSS]$ rather than the collection $\PPP \cup \SSS$ is a consequence of a technical step in the proof of the abstract result in \cite{CT4} that required a passage from continuous to discrete time.

For $x\in X$ and $\epsilon>0$ we let the \textit{bi-infinite Bowen ball} be
\[
\Gamma_\epsilon(x)=\{y\in X\, :\, d(f_tx, f_ty)\leq \epsilon \textrm{ for all }t\in\mathbb{R}\}.
\]

\begin{defn}
The \emph{set of non-expansive points at scale $\eps$} is  
\[
\mathrm{NE}(\epsilon):=\{ x\in X \mid \Gamma_\epsilon(x)\not\subset  f_{[-s,s]}(x) \text{ for any }s>0 \},
\]
where $f_{[a,b]}(x) = \{f_tx : a \leq t \leq b\}$.
\end{defn}

\begin{defn} 
Given a potential $\varphi$, the \emph{pressure of obstructions to expansivity} is $P^\perp_{\mathrm{exp}}(\varphi):=\lim_{\eps\to 0} P^\perp_{\mathrm{exp}}(\varphi, \eps)$, where
\[
P^\perp_{\mathrm{exp}}(\varphi, \epsilon)=\sup_{\mu\in \mathcal{M}^e(\mathcal{F})}\left\{
h_\mu(f_1) + \int\varphi\, d\mu\, :\, \mu(\mathrm{NE}(\eps))=1\right\}.
\]
\end{defn}

The point of this definition is that every ergodic measure whose free energy exceeds $P^\perp_{\mathrm{exp}}(\varphi)$ gives zero measure to the non-expansive set, and thus `sees' only expansive behavior. 

We can now state the abstract theorem that we will use to prove our uniqueness results.

\begin{thm}\label{t.abstract}\cite[Theorem A]{CT4}
Let $(X, \mathcal{F})$ be a flow on a compact metric space, and $\varphi:X\rightarrow \mathbb{R}$ be a continuous potential function.  Suppose that $P^{\perp}_{\mathrm{exp}}(\varphi)<P(\varphi)$ and $X\times [0, \infty)$ admits a decomposition $(\PPP, \GGG, \SSS)$ with the following properties:
\begin{enumerate}[label=\textup{{(\Roman{*})}}]
\item $\GGG$ has specification;\label{cond:spec}
\item $\varphi$ has the Bowen property on $\GGG$;
\item $P([\PPP]\cup [\SSS], \varphi)<P(\varphi).$
\end{enumerate}
Then $(X, \mathcal{F}, \varphi)$ has a unique equilibrium state $\mu_\ph$.
\end{thm}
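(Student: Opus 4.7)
The plan is to follow Bowen's classical strategy of establishing a two-sided Gibbs property from specification plus the Bowen property and then deducing uniqueness from ergodic-theoretic comparison, but carried out on the collection $\GGG$ of ``good'' orbit segments rather than on the whole space. First I would pass from the flow to its time-$1$ map $f_1$ at a cost measured by $P^\perp_{\mathrm{exp}}(\varphi)$; since by hypothesis this pressure is strictly less than $P(\varphi)$, every equilibrium state $\mu$ for $\FFF$ is also an equilibrium state for $(f_1,\varphi)$ giving zero mass to the non-expansive set, so from now on I can work at an expansivity scale $\eps>0$ small enough that all relevant Bowen balls are well-behaved. The reason for the integer-time thickening $[\PPP]$, $[\SSS]$ in hypothesis~(III) is precisely this reduction: any orbit segment of integer length whose decomposition components are close to integer length lies in the thickened sets.

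Next I would establish two-sided Gibbs-like bounds on $\GGG$. For the \emph{lower bound}, given a maximal $(t,\delta)$-separated set $E\subset X$, I use the specification property of $\GGG$ at a fixed scale $\rho<\tfrac{\eps}{4}$ to concatenate an arbitrary point $x$ with orbit segments drawn from $\GGG$, producing uniformly separated orbits whose partition sums compare with $\Lambda(\varphi,\delta,t)$; combined with the Bowen property on $\GGG$, this yields
\[
\mu(B_t(x,\delta)) \;\ge\; C_1\,e^{-tP(\varphi)+\Phi(x,t)} \qquad \text{for } (x,t)\in\GGG,
\]
for any equilibrium state $\mu$. For the \emph{upper bound}, one covers $X$ at time $t$ by Bowen balls centered on a maximal separated set and uses expansivity together with the Bowen property to show
\[
\mu(B_t(x,\delta)) \;\le\; C_2\,e^{-tP(\varphi)+\Phi(x,t)} \qquad \text{for } (x,t)\in\GGG.
\]
This step is essentially Bowen's argument, but restricted to $\GGG$.

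The core step is then to promote these bounds on $\GGG$ to effective lower bounds that apply along $\mu$-typical orbits for any equilibrium state $\mu$. Here one invokes hypothesis~(III): the partition sum $\Lambda([\PPP]\cup[\SSS],\varphi,\delta,t)$ grows at an exponential rate strictly smaller than $P(\varphi)$, so by a standard counting argument the measure of the set of points $x$ whose prefix $p(x,t)$ or suffix $s(x,t)$ in the decomposition exceeds a small fraction $\eta t$ decays exponentially faster than $e^{tP(\varphi)}$. Consequently, for $\mu$-a.e.\ $x$ and large $t$, one has $(f_{p}x,g)\in\GGG$ with $g/t\to 1$, and the Gibbs bounds on $\GGG$ extend (up to a universal constant absorbing the short prefix and suffix, using uniform continuity of $\varphi$) to genuine two-sided Gibbs bounds along orbits of $\mu$.

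Finally, uniqueness follows from the standard endgame: if $\mu_1,\mu_2$ are two ergodic equilibrium states, the two-sided Gibbs bounds show they are uniformly comparable on Bowen balls that are typical for either measure, hence mutually absolutely continuous; by ergodicity of $f_1$ (which itself requires a short additional argument using specification of $\GGG$), they must coincide, and any non-ergodic equilibrium state decomposes into ergodic ones, each of which equals this common measure. I expect the main obstacle to be the bookkeeping in the previous paragraph: controlling the decomposition along $\mu$-typical orbits requires balancing the pressure gap on $[\PPP]\cup[\SSS]$ against the multiplicative distortion incurred when one glues a prefix-suffix pair onto a good core, and this is precisely where the passage from continuous to discrete time, and the thickening in the definition of $[\PPP]$ and $[\SSS]$, become technically delicate.
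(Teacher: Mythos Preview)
The paper does not prove this theorem; it is quoted verbatim from \cite[Theorem A]{CT4} and used as a black box. There is therefore nothing in the paper to compare your sketch against.

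That said, your outline is broadly faithful to the strategy of \cite{CT4}, with one correction worth flagging. You state the upper Gibbs bound only for $(x,t)\in\GGG$, but in fact the upper bound $\mu(B_t(x,\delta))\le C_2 e^{-tP(\varphi)+\Phi(x,t)}$ must hold for \emph{all} orbit segments, and this is where the expansivity hypothesis $P^\perp_{\mathrm{exp}}(\varphi)<P(\varphi)$ is actually used (via an almost-expansivity argument that controls Bowen balls at the expansivity scale for $\mu$-typical points). Restricting the upper bound to $\GGG$ would not suffice for the uniqueness endgame, since you need to compare two equilibrium states on sets that are typical for \emph{both} measures, and a priori neither measure need be supported on orbits that lie entirely in $\GGG$. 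The lower Gibbs bound, by contrast, genuinely holds only on the thickened collections $\GGG^M$ (orbit segments whose prefix and suffix have bounded length), and the pressure gap (III) is used exactly as you describe to show that $\GGG^M$ carries full pressure for large $M$---cf.\ Lemma~\ref{lem:ctlems} in the present paper, which extracts precisely these facts from \cite{CT4}.
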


\subsection{Pressure and periodic orbits for geodesic flows} \label{s.periodicpressure} 
For $a<b$, let $\Per_{R}(a, b]$ denote the set of closed regular geodesics with length in the interval $(a,b]$. 
For each such geodesic $\gamma$, let
$\Phi(\gamma)$ be the value given by integrating $\ph$ around $\gamma$; that is, $\Phi(\gamma):=\Phi(v, |\gamma|) = \int_0^{|\gamma|} \ph(f_t v)\,dt$, where $v\in T^1M$ is tangent to $\gamma$ and $|\gamma|$ is the length of $\gamma$.
Given $T,\delta>0$, let
\begin{equation}\label{eqn:CTdelta}
\Lambda^\ast_{\Reg} (\ph, T,\delta) = \sum_{\gamma\in \Per_R(T-\delta,T]} e^{\Phi(\gamma)}.
\end{equation}
For a closed geodesic $\gamma$, let $\mu_\gamma$ be the normalized Lebesgue measure around the orbit.  We consider the measures
\[
\mureg_{T,\delta} = \frac 1{\Lambda^\ast_{\Reg} (\ph, T,\delta)} \sum_{\gamma\in \Per_R(T-\delta,T]} e^{\Phi(\gamma)} \mu_\gamma.
\]
We say that
\emph{regular closed geodesics weighted by $\varphi$ equidistribute} to a measure $\mu$ if $\lim_{T\to\infty} \mureg_{T,\delta} = \mu$ in the weak* topology for every $\delta>0$.
Equidistribution of weighted periodic orbits for equilibrium states  was first investigated for Axiom A flows by Parry \cite{wP88}, and for geodesic flow on manifolds of non-positive curvature by Pollicott \cite{mP96}.

For any $\epsilon>0$ smaller than the injectivity radius of $M$ and any $\delta>0$, choosing $v_\gamma \in T^1M$ tangent to each $\gamma \in\Per_{R}(T-\delta,T]$ gives an $(\epsilon, T)$-separated set \cite[\S6]{knieper98}.   Since $| \Phi(\gamma)- \Phi(v_\gamma, T)| \leq \delta \|\ph\|$, it follows that
\begin{equation}\label{eqn:LCL}
\Lambda^*_\Reg (\ph,T,\delta) \leq e^{\delta \|\ph\|} \Lambda(\ph,\eps,T).
\end{equation}
This shows that
\begin{equation}\label{eqn:limsup}
\limsup_{T\to\infty} \frac 1T \log \Lambda^\ast_{\Reg} (\ph, T,\delta) \leq P(\ph).
\end{equation}
It is straightforward to verify that the value of the $\limsup$ is independent of the choice of $\delta>0$. The expression on the left hand side is the \emph{upper pressure of regular closed geodesics}, and we denote this by $\overline P_{\Reg}^\ast(\ph)$. We also define
\[
\underline P_{\Reg, \delta}^\ast(\ph) = \liminf_{T\to\infty} \frac 1T \log \Lambda^\ast_{\Reg} (\ph, T,\delta).
\]
If this quantity is independent of $\delta>0$, and agrees with  $\overline P^\ast_{\Reg}(\ph)$, we can define the \emph{pressure of regular closed geodesics} to be
\[
P_{\Reg}^\ast(\ph) = \lim_{T\to\infty} \frac 1T \log \Lambda^\ast_{\Reg} (\ph, T,\delta).
\]
The second half of the proof of the variational principle in \cite[Theorem 9.10]{Wa} gives the following.

\begin{lem}\label{lem:equidist}
If $\delta>0, T_k\to\infty$ satisfy $\frac 1{T_k} \log \Lambda^\ast_{\Reg}(\ph, T_k,\delta) \to P(\ph)$ and $\mureg_{T_k,\delta} \to \mu$ as $k\to\infty$, then $\mu$ is an equilibrium state for $\ph$. 
\end{lem}
Thus, if $\underline P_{\Reg, \delta}^\ast(\ph) = P(\ph)$, every limit of the measures $\{\mureg_{T,\delta}\}$ is an equilibrium state, which gives the following proposition.

\begin{prop}\label{prop:equidist}
If
$P^\ast_{\Reg} (\ph) =P(\ph)$ and $\ph$ has a unique equilibrium state $\mu$, then the regular closed geodesics weighted by $\ph$ equidistribute to $\mu$.
\end{prop}

The growth rate in \eqref{eqn:limsup}, with the sum restricted to prime closed geodesics, was studied by Gelfert and Schapira \cite{GS14}, who called it the \emph{regular Gurevic pressure}.
They consider a variety of definitions of topological pressure for geodesic flow on rank 1 manifolds;
we refer the reader to \cite{GS14} for details.



\subsection{Geometry}\label{sec:geometry}

Throughout the paper $M$ denotes a compact, connected, boundaryless smooth manifold with a smooth Riemannian metric $g$, with non-positive sectional curvatures at every point.

For each $v\in TM$ there is a unique constant speed
geodesic denoted $\gamma_v$ such that $\dot{\gamma}_v(0)=v$.  The \emph{geodesic flow} $\mathcal{F}=(f_t)_{t\in\mathbb{R}}$ acts on $TM$  by $f_t(v)=(\dot\gamma_v)(t)$.  The unit tangent bundle $T^1M$ is compact and $\FFF$-invariant; from now on we restrict to the flow on $T^1M$.
We recall some well-known properties of geodesic flow in this setting; see \cite{wB95,pE99} for more details.

We write $d$ for the distance function on $M$ induced by the Riemannian metric. The Riemannian metric on $M$ lifts to the \emph{Sasaki metric} on $TM$. We write  $\dS$ for the distance function this Riemannian metric induces on $T^1M$.
Another distance function on $T^1M$ 
was used by Knieper in \cite{knieper98}:
\begin{equation}\label{eqn:dK}
\dK(v,w) = \max \{d(\gamma_v(t), \gamma_w(t)) \mid t \in [0,1] \}.
\end{equation}
We call $\dK$ the \emph{Knieper metric};  it is not necessarily induced by a Riemannian metric on $TM$. The two distance functions $\dS$ and $\dK$ are uniformly equivalent.  
We will typically consider Bowen balls with respect to the Knieper metric, so
\begin{align*}
B_T(v,\eps) &= \{w\in T^1 M : \dK(f_t w, f_t v) < \eps \text{ for all } 0\leq t \leq T \} \\
&= \{w\in T^1 M : d(\gamma_w(t), \gamma_v(t)) < \eps \text{ for all } 0 \leq t \leq T+1 \}.
\end{align*}

A \emph{Jacobi field} along a geodesic $\gamma$ is a vector field along $\gamma$ satisfying
\begin{equation}\label{eqn:Jacobi}
J''(t) + R(J(t), \dot{\gamma}(t))\dot{\gamma}(t)=0,
\end{equation}
where $R$ is the Riemannian curvature tensor on $M$ and $'$ represents covariant differentiation along $\gamma$.

If $J(t)$ is a Jacobi field along a geodesic $\gamma$ and both  $J(t_0)$ and $J'(t_0)$ are orthogonal to $\dot{\gamma}(t_0)$ for some $t_0$, then $J(t)$ and $J'(t)$ are orthogonal to $\dot{\gamma}(t)$ for all $t$.  Such a Jacobi field is an \emph{orthogonal Jacobi field}.

A Jacobi field $J(t)$ along a geodesic $\gamma$ is \emph{parallel at $t_0$} if $J'(t_0)=0$. A Jacobi field $J(t)$ is parallel if it is parallel for all $t \in \RR$.

Nonpositivity of the sectional curvatures implies that $\|J(t)\|$ and $\|J(t)\|^2$ are convex functions of $t$.



\subsubsection{Invariant foliations}\label{sec:foliations}

We describe three important $\FFF$-invariant subbundles $E^u$, $E^s$, and $E^c$ of $TT^1 M$.  The bundle $E^c$ is spanned by the vector field $V$ that generates the flow $\mathcal{F}$.  To describe $E^u$ and $E^s$, we first write $\JJJ(\gamma)$ for the space of orthogonal Jacobi fields for $\gamma$; given $v\in T^1 M$ there is a natural isomorphism $\xi \mapsto J_\xi$ between $T_vT^1M$ and $\JJJ(\gamma_v)$, which has the property that
\begin{equation} \label{compare}
\|df_t(\xi)\|^2= \|J_\xi(t)\|^2+\|J'_\xi(t)\|^2.
\end{equation}
An orthogonal Jacobi field $J$ along a geodesic $\gamma$ is \emph{stable} if $\|J(t)\|$ is bounded for $t\geq 0$, and \emph{unstable} if it is bounded for $t\leq 0$.  The stable and the unstable Jacobi fields each form linear subspaces of 
$\JJJ(\gamma)$, which we denote by $\JJJ^s(\gamma)$ and $\JJJ^u(\gamma)$, respectively.
The corresponding stable and unstable subbundles of $TT^1M$ are
\begin{align*}
E^u(v)&=\{ \xi \in T_v(T^1M) : J_\xi \in \JJJ^u(\gamma_v) \}, \\
E^s(v)&=\{ \xi \in T_v(T^1M) : J_\xi \in \JJJ^s(\gamma_v) \}.
\end{align*}
We also write $E^{cu} = E^c\oplus E^u$ and $E^{cs} = E^c\oplus E^s$.
The subbundles have the following properties (see \cite{pE99} for details):
\begin{itemize}
\item $\dim(E^u)=\dim(E^s)= n-1$, and $\dim(E^c)=1$;
\item the subbundles are invariant under the geodesic flow;
\item the subbundles depend continuously on $v$, see \cite{pE99, GW99};
\item $E^u$ and $E^s$ are both orthogonal to $E^c$;
\item $E^u$ and $E^{s}$ intersect non-trivially if and only if $v \in \Sing$;
\item $E^\sigma$ is integrable to a foliation $W^\sigma$ for each $\sigma\in \{u,s,cs,cu\}$.
\end{itemize}
It is proved in \cite[Theorem 3.7]{wB82} that the foliation $W^s$ is minimal in the sense that $W^s(v)$ is dense in $T^1M$ for every $v \in T^1M$. Analogously, the foliation $W^u$ is also minimal. It follows from the minimality of $W^s$ that the geodesic flow is topologically mixing \cite[Theorem 3.5]{wB82}.  

%

\subsubsection{$H$-Jacobi fields and the function $\lambda$}\label{sec:lambda-def}

Our hyperbolicity estimates will be given in terms of a function $\lambda\colon T^1M\to [0,\infty)$, which we now describe. 
Let $\gamma$ be a unit speed geodesic with $\gamma(t_0)=p \in M$, 
 and let $H \subset M$ be a hypersurface orthogonal to $\gamma$ at $p$.  
Let $\JJJ_H(\gamma)$ be the set of \emph{$H$-Jacobi fields} obtained by varying $\gamma$ through unit speed geodesics orthogonal to $H$.  This is an $(n-1)$-dimensional Lagrangian subspace of $\JJJ(\gamma)$.  Writing $H^{s,u}$ for the stable and unstable horospheres, we have $\JJJ_{H^{s,u}}(\gamma) = \JJJ^{s,u}(\gamma)$. 
Let $\UUU\colon T_p H\to T_p H$ be the symmetric linear operator defined by $\UUU(v)=\nabla_vN$, where $N$ is the field of unit vectors normal to $H$ on the same side as $\dot \gamma (t_0)$; this determines the second fundamental form of $H$.

\begin{lem}\label{J'-is-U}
If $J$ is  an $H$-Jacobi field along $\gamma$, then $J'(t_0)=\UUU(J(t_0))$.
\end{lem}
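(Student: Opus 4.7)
The plan is to interpret $J$ as the variational field of a smooth variation through unit-speed geodesics perpendicular to $H$, and then compute $J'(t_0)$ by swapping covariant derivatives via the symmetry lemma. Concretely, I would fix a smooth curve $c\colon(-\eps,\eps)\to H$ with $c(0)=p$ and $\dot c(0)=J(t_0)$, let $N$ denote the smooth unit normal field along $H$ chosen (near $p$) so that $N(p)=\dot\gamma(t_0)$, and define the variation
\[
\alpha(s,t) = \exp_{c(s)}\bigl((t-t_0)\,N(c(s))\bigr).
\]
By construction each curve $t\mapsto\alpha(s,t)$ is a unit-speed geodesic orthogonal to $H$, with $\alpha(0,t)=\gamma(t)$, so $J(t)=\partial_s\alpha(0,t)$ is an $H$-Jacobi field representative of the given one (and since the space of $H$-Jacobi fields is $(n-1)$-dimensional and the assignment $J\mapsto J(t_0)\in T_pH$ is an isomorphism by the Lagrangian/transversality property, it suffices to verify the identity for fields arising from such variations).

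Next I would apply the symmetry of the Levi-Civita connection (equality of mixed covariant derivatives) to the variation at $(s,t)=(0,t_0)$:
\[
J'(t_0) \;=\; \frac{D}{\partial t}\bigg|_{t=t_0}\partial_s\alpha(0,t) \;=\; \frac{D}{\partial s}\bigg|_{s=0}\partial_t\alpha(s,t_0).
\]
By our choice of variation, $\partial_t\alpha(s,t_0)=N(c(s))$, so the right-hand side equals $\frac{D}{ds}\big|_{s=0}N(c(s))=\nabla_{\dot c(0)}N=\nabla_{J(t_0)}N=\mathcal U(J(t_0))$, which is precisely the claim.

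The argument is essentially a bookkeeping exercise built on the symmetry lemma, so there is no real obstacle; the only place that needs a bit of care is ensuring that the variation really does sweep out an arbitrary $H$-Jacobi field. This is where one uses that $\JJJ_H(\gamma)$ has dimension $n-1$ (parametrized by $J(t_0)\in T_pH$) and that the variation above produces an $H$-Jacobi field with prescribed value $J(t_0)=\dot c(0)$ at $t_0$, so every element of $\JJJ_H(\gamma)$ arises this way. A minor sign remark is also warranted: the identification $N(p)=\dot\gamma(t_0)$ is the one encoded in the statement's phrase ``on the same side as $\dot\gamma(t_0)$,'' which makes the identity $J'(t_0)=\mathcal U(J(t_0))$ hold without an extra sign.
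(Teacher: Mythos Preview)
Your proof is correct and follows essentially the same approach as the paper: construct a variation of $\gamma$ through unit-speed geodesics normal to $H$, apply the symmetry lemma to swap covariant derivatives, and identify the result with $\nabla_{J(t_0)}N=\UUU(J(t_0))$. You are more explicit than the paper about the construction of the variation via the exponential map and about why every $H$-Jacobi field arises this way, but the core argument is identical.
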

\begin{proof}  An $H$-Jacobi field $J$ along $\gamma$ is determined by a variation $\alpha(s,t)$ of $\gamma$ through unit speed geodesics such that $\alpha(s,t_0) \in H$ and  $\frac{\nabla \alpha}{\partial s}(s,t_0)$ is a field of unit normals to $H$.  Using the symmetry of the Levi-Civita connection we can make the calculation
\[
J'(t_0)=\frac{\nabla}{\partial t}\frac{\partial\alpha}{\partial s}(0,t_0)=\frac{\nabla}{\partial s}\frac{\partial \alpha}{\partial t}(0,t_0)=\nabla_{J(t_0)} N=\UUU(J(t_0)). \qedhere
\]
\end{proof}

The key consequence of Lemma \ref{J'-is-U} is that writing $\lambda_H$ for the minimum eigenvalue of the linear map $\UUU$, 
every $H$-Jacobi field $J$ has
\begin{equation}\label{eqn:JJ'}
\langle J, J \rangle '(t_0)
= 2\langle J, \UUU J\rangle (t_0)
\geq 2 \lambda_H\langle J(t_0), J(t_0)\rangle,
\end{equation}
which gives $(\log \|J\|^2)'(t_0) \geq 2\lambda_H$, and in particular
\begin{equation}\label{eqn:logJ}
(\log\|J\|)'(t_0) \geq \lambda_H.
\end{equation}
Let $\UUU^s_v \colon T_{\pi v} H^s \to T_{\pi v} H^s$ be the symmetric linear operator associated to the stable horosphere $H^s$, and similarly for $\UUU^u_v$.  Then $\UUU_v^u$ and $\UUU_v^s$ depend continuously on $v$, $\UUU^u$ is positive semidefinite, $\UUU^s$ is negative semidefinite, and $\UUU^u_{-v}=-\UUU^s_v$.  

Let $\Lambda$ be the maximum eigenvalue of $\UUU^u_v$ over all $v\in T^1M$.  If $J_\xi$ is a stable or unstable Jacobi field we have
$\|J_\xi'(t)\|\leq \Lambda \|J_\xi(t)\|$ for all $t$.  Thus if $\xi$ is in $E^s$ or $E^u$, 
then by \eqref{compare} and Lemma \ref{J'-is-U}, $\|df_t\xi\|$ and $\|J_\xi(t)\|$ are uniformly comparable in the sense that
\begin{equation}\label{eqn:unif-comp}
\| J_\xi(t)\|^2  \leq \| df_t \xi\|^2\leq (1+\Lambda^2)\|J_\xi(t)\|^2.
\end{equation}

\begin{defn} \label{d.lambda}
For $v \in T^1M$, let $\lambda^u(v)$ be the minimum eigenvalue of $\UUU^u_v$ and let $\lambda^s(v) = \lambda^u(-v)$. Let $\lambda(v) = \min ( \lambda^u(v), \lambda^s(v))$.
\end{defn}

The functions $\lambda^u$, $\lambda^s$, and $\lambda$ are continuous since the map $v\mapsto \UUU^u_v$ is continuous.
By positive (negative) semidefiniteness of $\UUU^{u,s}$, we have $\lambda^{u,s} \geq 0$.  
The following is an immediate consequence of \eqref{eqn:logJ}.

\begin{lem}\label{l.lambda_rate}
Given  $v\in T^1M$, let $J^u$ be an unstable Jacobi field along $\gamma_v$ and $J^s$ be a stable Jacobi field along $\gamma_v$. Then 
\[
\|J^u(T)\|\geq e^{\int_0^T \lambda^u(f_tv)dt}\|J^u(0)\| \textrm{ and } \|J^s(T)\|\leq e^{-\int_0^T \lambda^s(f_tv) dt}\|J^s(0)\|.
\]
\end{lem}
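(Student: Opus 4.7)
The plan is to reduce the integral bound to the pointwise infinitesimal bound \eqref{eqn:logJ}, applied at every time $t \in [0,T]$ with $H$ taken to be the unstable (resp.\ stable) horosphere at $f_t v$. The crucial observation is that an unstable Jacobi field $J^u$ along $\gamma_v$ is an $H^u$-Jacobi field at every base point along $\gamma_v$, not merely at $t=0$. This holds because $\FFF$-invariance of the unstable foliation $W^u$ identifies $J^u$, shifted to base point $f_t v$, with a variation of $\gamma_{f_t v}$ through geodesics normal to $H^u(f_t v) = \pi W^u(f_t v)$. Hence by Lemma \ref{J'-is-U}, $J^{u \prime}(t) = \UUU^u(f_t v) \cdot J^u(t)$ for every $t$.

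Given this, the first inequality is immediate. Since $\lambda^u(f_t v)$ is the minimum eigenvalue of the symmetric positive semidefinite operator $\UUU^u(f_t v)$, the calculation in \eqref{eqn:JJ'}--\eqref{eqn:logJ} gives
\[
(\log \|J^u\|)'(t) \geq \lambda^u(f_t v)
\]
at every $t$ where $J^u(t) \neq 0$. If $J^u(0) = 0$ the conclusion is vacuous; otherwise convexity of $t \mapsto \|J^u(t)\|^2$ combined with the unboundedness-backwards condition prevents $\|J^u\|$ from vanishing on $[0,T]$, so the pointwise inequality holds throughout and integrating from $0$ to $T$ yields the stated bound.

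For the stable case, I would run the same argument replacing $\UUU^u$ by $\UUU^s$: a stable Jacobi field $J^s$ is an $H^s$-Jacobi field at each base point $f_t v$, so $J^{s\prime}(t) = \UUU^s(f_t v) \cdot J^s(t)$. The identity $\UUU^s(v) = -\UUU^u(-v)$ together with $\lambda^s(v) = \lambda^u(-v)$ implies that the maximum eigenvalue of $\UUU^s(f_t v)$ is $-\lambda^s(f_t v)$, so
\[
\tfrac{1}{2}(\log \|J^s\|^2)'(t) = \frac{\langle \UUU^s J^s, J^s\rangle(t)}{\|J^s(t)\|^2} \leq -\lambda^s(f_t v),
\]
and integrating gives the second bound.

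I do not expect any serious obstacle: the content of the lemma is simply that the pointwise horosphere estimate from Lemma \ref{J'-is-U} integrates along the orbit. The only mild subtlety is verifying that the $H$-Jacobi interpretation persists along the whole orbit, which is a consequence of flow-invariance of $W^{u,s}$, and noting that zeros of $\|J^{u,s}\|$ do not interfere with the integration.
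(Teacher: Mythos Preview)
Your proposal is correct and follows the same line as the paper's proof, which simply declares the lemma an immediate consequence of \eqref{eqn:logJ}; you supply the routine details that the paper omits. One small quibble: your justification that a nontrivial $J^u$ does not vanish on $[0,T]$ invokes an ``unboundedness-backwards condition,'' but unstable Jacobi fields are by definition \emph{bounded} for $t\leq 0$; the non-vanishing follows more directly from the relation $J^{u\prime}(t)=\UUU^u(f_tv)J^u(t)$ you already established, since $J^u(t_0)=0$ would then force $J^{u\prime}(t_0)=0$ and hence $J^u\equiv 0$.
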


In \S\ref{sec:lambda} we collect some more properties of the functions $\lambda,\lambda^s,\lambda^u$.

\subsubsection{Leaf metrics}\label{sec:leaf-metrics}
In addition to the metrics $\dS$ and $\dK$ on $T^1M$, we will need to consider for each $v\in T^1M$ the \emph{intrinsic metric} on $W^s(v)$ defined by 
\begin{equation}\label{eqn:ds}
d^s(u, w) = \inf \{ \ell(\pi\zeta) \mid \zeta\colon [0,1]\to W^s(v), \zeta(0)=u, \zeta(1)=w\},
\end{equation}
where $\pi\colon T^1 M \to M$ is the canonical projection, $\ell$ denotes length of the curve in $M$, and the infimum is over all $C^1$ curves $\zeta$ connecting $u$ and $w$ in $W^s(v)$.  In other words, $d^s(u,w)$ is the distance between the footprints $\pi(u)$ and $\pi(w)$ when we restrict ourselves to motion along the horosphere $H^s(v) = \pi W^s(v)$.  
Given $\rho>0$, the \emph{local stable leaf} through $v$ of size $\rho$ is
\[
W_\rho^s(v) := \{w\in W^s(v) : d^s(v,w) \leq \rho\}.
\]
Define $d^u$, $W_\rho^u(v)$ similarly.  Locally, the intrinsic metric on $W^{cs}(v)$ is
\[
d^{cs}(u,w) = |t| + d^s(f_t u, w),
\]
where $t$ is the unique value so $f_t u \in W^s(w)$. This extends to a metric on the whole leaf $W^{cs}(v)$. We define $d^{cu}$, $W_\rho^{cs}(v)$, $W_\rho^{cu}(v)$ in the obvious way.

The minimality of the foliations $W^s$ and $W^u$, together with a standard compactness argument given in \cite[Lemma 8.1]{CFT}, gives the following result.

\begin{lem}\label{lem:unif-dense}
For every $\eps>0$, there exists $R>0$ such that $W_R^u(v)$ and $W_R^s(v)$ are $\eps$-dense in $T^1M$ for every $v\in T^1M$.
\end{lem}


If we restrict $\rho$ to be small, then the intrinsic metrics are uniformly equivalent to $\dS$ and $\dK$.  The following lemma lets us obtain a relationship between the leaf metrics and the dynamical metric
\begin{equation}\label{eqn:dt}
d_t(v,w) = \sup_{\tau\in [0,t]} \dK(f_\tau v, f_\tau w)
=\sup_{\tau\in [0,t+1]} d(\gamma_v(\tau), \gamma_w(\tau)).
\end{equation}

\begin{lem}\label{lem:monotonic-leaf}
For all $v\in T^1M$, $w\in W^s(v)$, $w' \in W^u(v)$, and $t\geq 0$, we have
\begin{align}
\label{eqn:dst}
& e^{-\Lambda t} d^s(v,w) \leq d^s(f_t v, f_t w) \leq d^s(v,w), \\
\label{eqn:dut}
& d^u(v,w') \leq d^u(f_t v, f_t w') \leq e^{\Lambda t} d^u(v,w').
\end{align}
\end{lem}
\begin{proof}
We prove \eqref{eqn:dst}; the proof of \eqref{eqn:dut} is similar.  Let $\zeta\colon [0,1]\to W^s(v)$ be a curve with $\zeta(0) = v$ and $\zeta(1)=w$, and let $\{\gamma_r : r\in [0,1]\}$ be the one-parameter family of geodesics determined by $\gamma_r'(0) = \zeta(r)$.  Each $\gamma_r$ is orthogonal to the stable horospheres of $\dot\gamma_r(t)$, so we obtain a family of stable Jacobi fields $J_r(t) := \frac{\partial}{\partial r} \gamma_r(t) \in \mathcal{J}^s(\zeta_r)$.  Since $0\leq \lambda^s \leq \Lambda$, Lemma \ref{l.lambda_rate} gives $e^{-\Lambda t} \|J_r(0)\| \leq \|J_r(t)\| \leq \|J_r(0)\|$ for all $t\geq 0$.  Since $\ell(f_t \zeta) = \int_0^1 \|\frac{\partial}{\partial r} \gamma_r(t)\| \,dr = \int_0^1 \|J_r(t)\| \,dr$, we obtain $e^{-\Lambda t} \ell(\zeta) \leq \ell(f_t \zeta) \leq \ell(\zeta)$, and taking an infimum over all $\zeta$ completes the proof.
\end{proof}

Given $v\in T^1M$ and $w\in W^{cs}(v)$, it follows from \eqref{eqn:dst} that the function $t\mapsto d^{cs}(f_t v,f_t w)$ is non-increasing, so \eqref{eqn:dK} gives
\begin{equation}\label{eqn:dtdcs}
d_t(v,w) \leq \dK(v,w) \leq d^{cs}(v,w).
\end{equation}
For $w\in W^u(v)$, we use \eqref{eqn:dut} to obtain
\begin{equation}\label{eqn:dKdu}
\begin{aligned}
&\dK(v,w) \leq e^\Lambda d^u(v,w), \\
&d_t(v,w) \leq d^u(f_{t+1} v, f_{t+1} w) \leq  e^\Lambda d^u(f_t v, f_t w).
\end{aligned}
\end{equation}

\section{Decompositions for geodesic flow}\label{s.general}

\subsection{Main theorem} \label{s.decomp}

Now we state our main uniqueness result, which we apply to obtain Theorem \ref{t.multiples}.


\begin{thm}\label{t.geodgeneral}
Let $\varphi\colon T^1M\rightarrow \mathbb{R}$ be continuous. If $P(\mathrm{Sing}, \varphi)< P(\varphi)$, and for all $\eta>0$  the potential $\varphi$ has the Bowen property on 
\[
\GGG(\eta) = \bigg\{(v, t) : \int_0^\tau \lambda(f_s v)\,ds \geq \eta \tau,  \int_0^\tau \lambda(f_{-s}f_{t} v)\,ds \geq \eta \tau ~\forall \tau\in[0,t] \bigg\},
\]
then the geodesic flow has a unique equilibrium state for $\varphi$.  This equilibrium state is fully supported and hyperbolic. The weighted sums of regular closed geodesics satisfy counting estimates given in \eqref{eqn:weighted-sums}, and are equidistributed with respect to $\mu$ as described in \S\ref{s.periodicpressure}.
\end{thm}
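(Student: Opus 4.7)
The plan is to apply Theorem \ref{t.abstract} with the decomposition suggested by the statement itself. Fix a small parameter $\eta > 0$ (to be chosen at the end). For each orbit segment $(v,t)$, let $p = p(v,t)$ be the largest $p \in [0,t]$ such that $\int_0^\tau \lambda(f_s v)\,ds < \eta \tau$ for every $\tau \in [0,p]$, and let $s = s(v,t)$ be the longest final segment with the analogous reverse-time property $\int_0^\tau \lambda(f_{-s}f_t v)\,ds < \eta\tau$ for every $\tau \in [0,s]$. Setting $g = t - p - s$, the middle segment $(f_p v, g)$ lies in $\GGG(\eta)$ by construction. Let $\PPP, \GGG, \SSS$ be the resulting collections. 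Condition (II) — the Bowen property on $\GGG(\eta)$ — is then given by hypothesis.

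For (III) I would use the fact that $\lambda$ is continuous and vanishes identically on $\Sing$, which is compact $\FFF$-invariant. Thus $\{v : \lambda(v) < \delta\}$ is an open neighborhood of $\Sing$ that shrinks to $\Sing$ as $\delta \to 0$. By construction, any $(v,t) \in \PPP(\eta)$ must spend a definite fraction of time in $\{\lambda < \sqrt{\eta}\}$, forcing orbit segments in $\PPP(\eta)$ (and $\SSS(\eta)$) to accumulate on $\Sing$-orbits as $\eta \to 0$. A standard upper-semicontinuity argument for pressure on a shrinking nested family of closed invariant candidates then yields
\[
\lim_{\eta \to 0} P(\PPP(\eta) \cup \SSS(\eta), \varphi) \leq P(\Sing, \varphi) < P(\varphi),
\]
and the passage to $[\PPP] \cup [\SSS]$ only multiplies the partition sums by a factor depending on the oscillation of $\varphi$ on balls of unit time, which is harmless. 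Choosing $\eta$ small makes (III) hold. For the expansivity obstruction, the standard argument in this setting shows that a nontrivial vector in $\Gamma_\eps(v)$ that is not a flow translate of $v$ forces, via convexity of $\|J\|$ along orthogonal Jacobi fields, the existence of a nonzero parallel Jacobi field along $\gamma_v$; hence $v \in \Sing$ for all small $\eps$. Therefore $\mathrm{NE}(\eps) \subset \Sing$ and $P^\perp_{\mathrm{exp}}(\varphi) \leq P(\Sing, \varphi) < P(\varphi)$.

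The hard part — and the main obstacle — is condition (I), specification on $\GGG(\eta)$. The key input is Lemma \ref{l.lambda_rate}, which shows that for $(v,t) \in \GGG(\eta)$ the unstable Jacobi fields expand at rate at least $\eta$ forward from $v$, while stable Jacobi fields contract at rate at least $\eta$ backward from $f_t v$. Together with continuity of the subbundles $E^{s,u}$ and the uniform equivalence of $\dS$, $\dK$, and the intrinsic metrics $d^{s,u}$, this gives uniform local product structure near each endpoint of a $\GGG(\eta)$-segment. Using the topological transitivity of $\FFF$ on $T^1M$ (dense regular orbits) and the minimality of the foliations $W^u, W^s$, I would chain $\GGG(\eta)$-segments together by connecting the unstable leaf of an endpoint to the stable leaf of the next starting point in uniformly bounded time, using convexity of intrinsic leaf-distances under the flow to control the shadowing error. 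The delicate point is that $\GGG(\eta)$-segments are only hyperbolic at their endpoints, not uniformly along their entire length, so one must take care that the shadowing orbit, produced by the product-structure closure, does not drift far from the prescribed segments on the interior; convexity of $d^s$ along $W^s$ and of $d^u$ along $W^u$ is precisely what rescues this. This specification argument is what is promised in \S\ref{s.spec}.

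Granting Theorem \ref{t.abstract}, uniqueness of an equilibrium state $\mu$ follows. Ergodicity of $\mu$ and $h_\mu + \int\varphi\,d\mu = P(\varphi) > P(\Sing,\varphi)$ force $\mu(\Sing) = 0$, hence $\mu(\Reg) = 1$. Full support of $\mu$ follows from the specification property on $\GGG(\eta)$, since specification implies that every open set carries positive measure for any equilibrium state. Finally, specification on $\GGG(\eta)$ combined with the Bowen property on $\GGG$ yields $\Pper(\varphi) = P(\varphi)$ by a standard orbit-counting argument; equidistribution of weighted regular periodic orbits to $\mu$ then follows from uniqueness and the observation recalled in \S\ref{s.periodicpressure}.
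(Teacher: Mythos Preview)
Your overall strategy matches the paper's exactly: apply Theorem~\ref{t.abstract} to the decomposition $(\BBB(\eta),\GGG(\eta),\BBB(\eta))$, verify specification on $\GGG(\eta)$ via uniform local product structure at the endpoints (which lie in $\Reg(\eta)$), bound $P^\perp_{\mathrm{exp}}$ by $P(\Sing,\varphi)$ via the flat strip theorem, and push $P([\BBB(\eta)],\varphi)$ down to $P(\Sing,\varphi)$ as $\eta\to 0$ using upper semicontinuity of entropy. Two points need correction, however.

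\textbf{The decomposition is misdefined.} You take $p(v,t)$ to be the largest $p$ such that $\int_0^\tau \lambda(f_s v)\,ds < \eta\tau$ for \emph{every} $\tau\in[0,p]$; this is the \emph{first} time the running average reaches $\eta$, not the last. With that choice the middle segment need not lie in $\GGG(\eta)$: if $\lambda$ is large on a short initial interval and then vanishes for a long time, your $p$ stops early, and the average along the remaining segment eventually drops below $\eta$. The correct choice (as in the paper) is the largest $p$ with $(v,p)\in\BBB(\eta)$, i.e.\ the \emph{last} time $\int_0^p\lambda < \eta p$; then for every $\tau>0$ one has $\int_0^{p+\tau}\lambda \geq \eta(p+\tau)$ while $\int_0^p\lambda \leq \eta p$, and subtracting gives $\int_p^{p+\tau}\lambda \geq \eta\tau$ as required. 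The same fix applies to $s(v,t)$.

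\textbf{Full support does not follow from specification on $\GGG(\eta)$ alone.} Your sentence ``specification implies that every open set carries positive measure'' is true when specification holds on all of $X\times[0,\infty)$, because then the Gibbs bound applies to every orbit segment. Here the lower Gibbs bound coming out of \cite{CT4} applies only to segments in $\GGG^M$, so it yields $\mu(B(v,\rho))>0$ only when some long $\GGG$-segment \emph{starts} near $v$. An open set near $\Sing$ contains no points of $\Reg(\eta)$, hence no starting points of $\GGG(\eta)$-segments, and your argument gives nothing there. The paper closes this gap in two steps: first it shows (Lemma~\ref{lem:dense}) that $\{\lambda>0\}$ is dense; then it manufactures, for each $v$ with $\lambda(v)>0$ and each $\rho>0$, an arbitrarily long $\GGG(\eta_0)$-segment starting in $B(v,\rho)$ by using specification to glue a short piece of the orbit of $v$ to many copies of a fixed good segment (Lemma~\ref{cor:long-good}). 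Only then does the lower Gibbs bound give $\mu(B(v,2\rho))>0$ on a dense set of $v$. The equidistribution claim has a parallel omission: to get $\Pper(\varphi)\geq P(\GGG^M,\varphi)=P(\varphi)$ you need a closing lemma for $\CCC(\eta)$, which the paper proves separately (Lemma~\ref{closing}) via a Brouwer fixed point argument.
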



The set of potentials having the Bowen property on $\GGG(\eta)$ for all $\eta>0$  contains all H\"older potentials,  all scalar multiples of the geometric potential, and all linear combinations of such potentials; see \S\ref{sec:Bowen}. 


We build up a proof of Theorem \ref{t.geodgeneral} in the next few sections. We start by describing the decomposition we use to apply Theorem \ref{t.abstract}.

\begin{figure}[htbp]
\includegraphics[width=.7\textwidth]{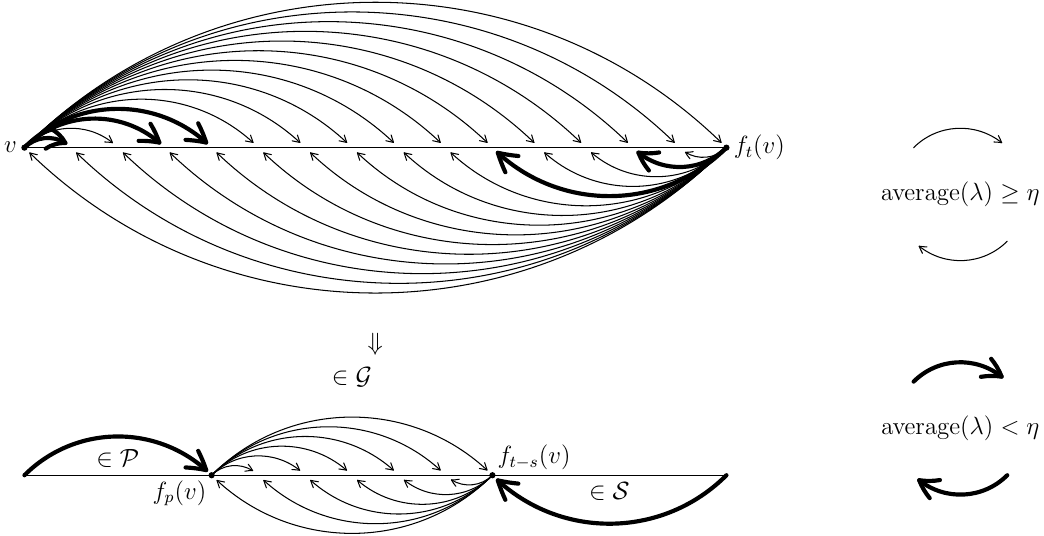}
\caption{Decomposing an orbit segment.}
\label{fig:decomposition}
\end{figure}

Given $\eta>0$, let $\BBB(\eta) := \big\{ (v, T) : \int_0^T\lambda(f_tv)\,dt< \eta T \big\}$.
We define maps 
$p,g,s \colon X \times [0, \infty) \to [0, \infty)$.  Given an orbit segment $(v,t)$, take $p=p(v,t)$ to be the large

st time such that 
$(v,p)\in \BBB(\eta)$.
Let $s=s(v,t)$ be the largest time in $[0,t-p]$ such that  the orbit segment $(f_{t-s}(v), s)$ is in $\mathcal{B}(\eta)$. The function $g$ determines the remaining part of the orbit segment denoted $(f_pv, g)$, so $g=t-p-s$. It is easily checked that $(f_pv, g) \in\GGG(\eta)$; see Figure \ref{fig:decomposition}. Thus the triple $(\BBB(\eta), \GGG(\eta), \BBB(\eta))$ equipped with the functions $(p, g, s)$ determines a decomposition for $X \times [0, \infty)$ in the sense of Definition \ref{def.decomp}. We will show that if $P(\mathrm{Sing}, \varphi)< P(\varphi)$ and if $\eta>0$ is chosen sufficiently small, then the hypotheses of Theorem~\ref{t.abstract} are satisfied using the decomposition $(\BBB(\eta), \GGG(\eta), \BBB(\eta))$. This will guarantee uniqueness of the equilibrium state.
 


\subsection{Properties of $\lambda$}\label{sec:lambda} 
In the following two lemmas, we prove that the function $\lambda\colon T^1M \to [0, \infty)$ vanishes on $\Sing$, and if $\lambda(v)=0$, then there is a nontrivial orthogonal Jacobi field $J$ on $\gamma$ such that $J(t)$ is parallel for all $t\leq 0$ or for all $t\geq 0$.

\begin{lem}\label{lem:Sing-lambda}
The following are equivalent for $v\in T^1 M$.
\begin{enumerate}[label=\textup{(\alph{*})}]
\item $v\in \Sing$.
\item $\lambda^s(f_t v)=0$ for all $t\in \RR$.
\item $\lambda^u(f_t v)=0$ for all $t\in \RR$.
\end{enumerate}
\end{lem}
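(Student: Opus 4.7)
The plan is to prove the cyclic equivalences via the common bridge of orthogonal parallel Jacobi fields along $\gamma_v$: a vector $v$ lies in $\Sing$ precisely when such a nontrivial field exists.

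For (a)$\Rightarrow$(b),(c), given $v\in\Sing$ I would pick a nonzero $\xi\in E^s(v)\cap E^u(v)$. The orthogonal Jacobi field $J=J_\xi$ is then bounded on all of $\RR$, and $\|J\|^2$ is convex by nonpositive curvature, so it is constant. Differentiating $\|J\|^2$ twice and invoking the Jacobi equation gives $0=2\|J'\|^2-2\langle R(J,\dot\gamma)\dot\gamma,J\rangle$ with both terms nonnegative, forcing $J'\equiv 0$, i.e., $J$ is parallel. At every $t$, $J$ remains a stable and an unstable orthogonal Jacobi field along $\gamma_{f_tv}$, so by Lemma \ref{J'-is-U} one has $\UUU^s(f_tv)J(t)=\UUU^u(f_tv)J(t)=J'(t)=0$; since $J(t)\neq 0$, both shape operators have a nontrivial kernel, so $\lambda^s(f_tv)=\lambda^u(f_tv)=0$.

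For (b)$\Rightarrow$(a) (with (c)$\Rightarrow$(a) symmetric), I would assume $\lambda^s(f_tv)=0$ for all $t$. For each $T>0$, applying the $\lambda^s$-analog of Lemma \ref{lem.lsu} at $\gamma_{f_{-T}v}$ produces a nontrivial orthogonal Jacobi field on that geodesic which is parallel for parameter $\geq 0$; reparametrizing by a time shift yields a Jacobi field $\tilde J_T$ along $\gamma_v$ that is parallel on $[-T,\infty)$, normalized so that $\|\tilde J_T\|\equiv 1$ there. The initial data $(\tilde J_T(0),\tilde J_T'(0))$ then lie in the compact set $\{(u,0):u\in T_{\pi v}M,\,u\perp v,\,\|u\|=1\}$, so I extract a subsequence $T_n\to\infty$ along which $\tilde J_{T_n}$ converges, by continuous dependence on initial data for the linear Jacobi ODE, to a Jacobi field $J^*$ on $\gamma_v$. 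For each fixed $r\in\RR$ one has $r\geq -T_n$ eventually, hence $\|J^*(r)\|=\lim_n\|\tilde J_{T_n}(r)\|=1$; applying the convexity/curvature calculation from the previous paragraph to $J^*$ then forces $J^*$ to be parallel, yielding a nontrivial orthogonal parallel Jacobi field along $\gamma_v$, i.e., $v\in\Sing$.

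The main obstacle is manufacturing the limit field $J^*$: it is essential both that the initial data $(\tilde J_T(0),\tilde J_T'(0))$ remain in a compact subset of $T_v(T^1M)$ (which requires the unit-norm normalization together with the fact that $0\in[-T,\infty)$, forcing $\tilde J_T'(0)=0$), and that the constant-unit-norm property survives the pointwise limit, so that the convex-second-derivative argument from the first paragraph can be reused to upgrade $\|J^*\|\equiv 1$ to $J^*$ being parallel. Once $J^*$ is built, the remaining bookkeeping is routine, and the symmetric implication (c)$\Rightarrow$(a) is handled by applying Lemma \ref{lem.lsu} directly at $\gamma_{f_{+T}v}$ to produce Jacobi fields parallel on $(-\infty,T]$ and then sending $T\to\infty$ in the opposite direction.
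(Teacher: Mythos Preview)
Your proof is correct and follows essentially the same route as the paper: a parallel Jacobi field kills both $\lambda^s$ and $\lambda^u$ along the orbit, and conversely a compactness-and-limit argument on the Jacobi fields produced by Lemma~\ref{lem.lsu} at times $-T\to-\infty$ yields a globally parallel field. The only minor difference is that in the limit step you first pass the norm condition $\|J^*\|\equiv 1$ to the limit and then re-run the convexity computation, whereas the paper passes $J_{T_k}'(r)=0$ directly to the limit; both are fine.
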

\begin{proof}
If $v\in \Sing$, then there is a parallel Jacobi field $J(t)$ along $\gamma_v(t)$, which gives $\lambda^s=\lambda^u=0$.
Since $\Sing$ is invariant, this gives (b) and (c).  

Now we show that (b) implies (a). If $\lambda^s(f_t v)=0$ for every $t\in \RR$, then for every $T\geq 0$ there is a stable Jacobi field $J_T$ along $\gamma_v$ that is parallel (with unit length) for $t\geq -T$.  By compactness we get a sequence $T_k\to\infty$ for which $J_{T_k}(0)$ and $J_{T_k}'(0)$ converge to some $J(0),J'(0) \in T_{\pi v} M$; the corresponding Jacobi field $J$ is parallel for all time, so $v\in \Sing$. The proof that (c) implies (a) is similar.
\end{proof}

\begin{lem}\label{lem.lsu}
Given $v\in T^1M$, the following are equivalent:
\begin{enumerate} [label=\textup{(\alph{*})}]
\item $\lambda^u(v)=0$;
\item For all $t\leq 0$, $\lambda^u(f_tv)=0$;
\item there is a nontrivial orthogonal Jacobi field $J$ on $\gamma_v$ such that $J(t)$ is parallel 
for all $t\leq 0$.
\end{enumerate}
The analogous result holds for $\lambda^s$ and $t\geq 0$.
\end{lem}


\begin{proof}
It is immediate that (c) $\Rightarrow$ (b) $\Rightarrow$ (a), so we just have to prove that (a) $\Rightarrow$ (c).  If $\lambda^u(v) = 0$, then there is a nonzero $w\in T_{\gamma(0)}{H^u(v)}$ with $\UUU^u_w=0$.  The corresponding $H^u(v)$-Jacobi field has $J'(0)=0$ by Lemma \ref{J'-is-U}, and is bounded for $t\leq 0$, so by convexity $\|J(t)\|$ is constant for $t\leq 0$.
Differentiating $\|J(t)\|^2$ gives $0 = \langle J', J \rangle = \langle \UUU_{f_t v}^u J, J\rangle$ for $t\leq0$. Since $\UUU^u$ is positive semidefinite symmetric, it follows that $\UUU_{f_t v}^u J = 0$, so $J(t)$ is parallel for $t\leq 0$.
\end{proof}



We also have the following quantitative version of Lemma \ref{lem:Sing-lambda}, and two corollaries which are useful for our topological pressure estimates.

\begin{prop}\label{prop:proximity}
For any $\delta>0$, there are $\eta>0$ and $T>0$ such that if $\lambda^s(f_t v) \leq \eta$ for all $t\in [-T,T]$, then $\dK(v,\Sing) < \delta$.  A similar result holds for $\lambda^u$.
\end{prop}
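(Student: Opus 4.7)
The plan is to argue by contradiction using a standard compactness-and-continuity argument, exploiting Lemma \ref{lem:Sing-lambda} which characterizes $\Sing$ as the set where $\lambda^s$ vanishes along the entire orbit.

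Suppose the conclusion fails for some $\delta>0$. Then for every $n$, taking $\eta_n = 1/n$ and $T_n = n$, we can find a vector $v_n \in T^1M$ with $\lambda^s(f_t v_n) \leq 1/n$ for all $t \in [-n,n]$, yet $\dK(v_n, \Sing) \geq \delta$. By compactness of $T^1 M$, after passing to a subsequence we may assume $v_n \to v_\infty$ for some $v_\infty \in T^1 M$. Since $\Sing$ is closed and $\dK$ is continuous, the condition $\dK(v_n, \Sing) \geq \delta$ passes to the limit, giving $\dK(v_\infty, \Sing) \geq \delta$, so in particular $v_\infty \notin \Sing$.

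Next I would fix any $t \in \RR$ and observe that once $n > |t|$ we have $t \in [-T_n, T_n]$, hence $\lambda^s(f_t v_n) \leq 1/n$. Continuity of the flow gives $f_t v_n \to f_t v_\infty$, and continuity of $\lambda^s$ (noted right after Definition \ref{d.lambda}) then yields $\lambda^s(f_t v_\infty) = 0$. Since $t \in \RR$ was arbitrary, $\lambda^s$ vanishes along the entire orbit of $v_\infty$, so by Lemma \ref{lem:Sing-lambda} we conclude $v_\infty \in \Sing$, contradicting $\dK(v_\infty, \Sing) \geq \delta$. The analogous statement for $\lambda^u$ follows by the same argument, using condition (c) of Lemma \ref{lem:Sing-lambda}.

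There is no real obstacle here; the whole proof is a soft compactness argument and the essential content has already been packaged into Lemma \ref{lem:Sing-lambda}. The only point to watch is that we genuinely need the interval $[-T,T]$ to grow without bound as $\eta \to 0$ (rather than choosing $T$ first): one cannot deduce membership in $\Sing$ from $\lambda^s$ being small only on a fixed finite time window, since $\lambda^s$ can be small along long arcs of regular orbits shadowing $\Sing$. The quantifier order in the statement (first $\delta$, then jointly $\eta$ and $T$) reflects exactly this, and the contradiction argument above handles both parameters simultaneously.
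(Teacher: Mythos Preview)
Your proof is correct and follows essentially the same approach as the paper: both are soft compactness arguments that reduce to Lemma \ref{lem:Sing-lambda}. The paper phrases it as an open-cover argument (the sets $A(T,\eta)=\{v:\lambda^s(f_tv)>\eta \text{ for some } |t|\leq T\}$ cover the compact set $\{v:\dK(v,\Sing)\geq\delta\}$), while you use the equivalent sequential-compactness formulation, but the content is identical.
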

\begin{proof}
For $\eta>0$, consider the open set $A(\eta) = \{v\in T^1 M :$ there exists $t\in [-\eta^{-1}, \eta^{-1}]$ such that $\lambda^s(f_t v) > \eta\}$. By Lemma \ref{lem:Sing-lambda}, $\Reg =\bigcup_{\eta>0} A(\eta)$. Let $K = \{v\in T^1 M : d(v,\Sing) \geq \delta\}$.  Since $\{A(\eta)\}_{\eta>0}$ is an open cover for the compact set $K$,  there exists a finite subcover.  Since the sets $A(\eta)$ are nested, this implies that $K\subset A(\eta)$ for some choice of $\eta$. 
\end{proof}


\begin{cor}\label{lem:to-Sing}
Let $\lambda(v)=0$. Then $\dK(f_tv, \Sing) \to 0$ as $t \to \infty$ or $\dK(f_tv, \Sing) \to 0$ as $t \to - \infty$.
\end{cor}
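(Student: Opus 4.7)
The plan is to reduce the corollary to the quantitative proximity estimate of Proposition \ref{prop:proximity} combined with the one-sided invariance statement that follows Lemma \ref{lem.lsu}. Since $\lambda(v)=\min(\lambda^u(v),\lambda^s(v))$ and both functions are nonnegative, the hypothesis $\lambda(v)=0$ splits into two cases: $\lambda^s(v)=0$ or $\lambda^u(v)=0$. I will show that the first case forces $\dK(f_tv,\Sing)\to 0$ as $t\to+\infty$, and a symmetric argument handles the second case with $t\to-\infty$.

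Assume $\lambda^s(v)=0$. The observation recorded right after Lemma \ref{lem.lsu} gives $\lambda^s(f_sv)=0$ for every $s\geq 0$. Fix $\delta>0$ and let $\eta,T$ be the constants supplied by Proposition \ref{prop:proximity} for $\lambda^s$. For any $t\geq T$ and any $\tau\in[-T,T]$ we have $t+\tau\geq 0$, so $\lambda^s(f_{t+\tau}v)=0\leq\eta$. Applying the proposition at the point $f_tv$ yields $\dK(f_tv,\Sing)<\delta$. Since $\delta>0$ was arbitrary, $\dK(f_tv,\Sing)\to 0$ as $t\to\infty$, as required.

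The case $\lambda^u(v)=0$ is entirely analogous: one uses $\lambda^u(f_sv)=0$ for all $s\leq 0$ together with the $\lambda^u$-version of Proposition \ref{prop:proximity}, applied at $f_tv$ for $t\leq -T$, to conclude $\dK(f_tv,\Sing)\to 0$ as $t\to-\infty$. No serious obstacle is anticipated here; all the work is already hidden in Proposition \ref{prop:proximity} (which in turn uses the equivalence in Lemma \ref{lem:Sing-lambda} that vanishing of $\lambda^s$ or $\lambda^u$ along the \emph{entire} orbit characterizes $\Sing$). The only point to be careful about is translating the one-sided vanishing of $\lambda^{s,u}$ into the two-sided hypothesis required by the proposition, which is handled by choosing $|t|\geq T$ so that the window $[t-T,t+T]$ lies in the appropriate half-line.
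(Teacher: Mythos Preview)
Your proof is correct and follows essentially the same approach as the paper: split into the cases $\lambda^s(v)=0$ and $\lambda^u(v)=0$, use the one-sided invariance from Lemma~\ref{lem.lsu} to get vanishing of $\lambda^{s}$ (resp.\ $\lambda^u$) along the forward (resp.\ backward) orbit, and then apply Proposition~\ref{prop:proximity} at $f_t v$ for $|t|\geq T$ so that the window $[t-T,t+T]$ lies in the relevant half-line. Your write-up is slightly more explicit about this last point than the paper's, but the argument is the same.
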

\begin{proof}
By Lemma \ref{lem.lsu}, we have $\lambda^s(f_tv)=0$ for all $t\geq 0$, or  $\lambda^u(f_tv)=0$ for all $t\leq 0$. Suppose we are in the first case. Given $\delta>0$, by Proposition \ref{prop:proximity},  there exists $\eta,T>0$ such that if $\lambda^s(f_tw)\leq \eta$ for all $t\in [-T,T]$, then $\dK(w,\Sing)<\delta$.  Thus for every $\tau\geq T$, we conclude that $\dK(f_\tau v,\Sing) < \delta$. Thus $\dK(f_tv, \Sing) \to 0$ as $t \to \infty$. A similar argument applies in the case that $\lambda^u(f_tv)=0$ for all $t\leq 0$ to show that $\dK(f_tv, \Sing) \to 0$ as $t \to - \infty$.
\end{proof}

\begin{cor}\label{c.singular}
Let $\mu$ be an invariant measure such that $\lambda(v)=0$ for $\mu$-almost every $v$.  Then $\mathrm{supp}(\mu)\subset \Sing$.
\end{cor}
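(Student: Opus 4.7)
My plan is to combine Corollary \ref{lem:to-Sing} with the Poincar\'e recurrence theorem. Let $A = \{v \in T^1 M : \lambda(v) = 0\}$, so by hypothesis $\mu(A) = 1$. By Corollary \ref{lem:to-Sing}, we can write $A = A^+ \cup A^-$, where
\[
A^+ = \{v \in A : \dK(f_t v, \Sing) \to 0 \text{ as } t \to +\infty\}, \quad A^- = \{v \in A : \dK(f_t v, \Sing) \to 0 \text{ as } t \to -\infty\}.
\]
Note that $A^+$ is flow-invariant: if $v \in A^+$ and $s \in \RR$, then $\dK(f_t(f_s v), \Sing) = \dK(f_{t+s} v, \Sing) \to 0$ as $t \to \infty$. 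Similarly $A^-$ is flow-invariant.

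Since $(T^1 M, \FFF)$ is a continuous flow on a compact space preserving $\mu$, the Poincar\'e recurrence theorem gives a full-$\mu$-measure set $R$ such that every $v \in R$ is both forward and backward recurrent, i.e.\ there exist sequences $t_n^\pm \to \pm\infty$ with $f_{t_n^\pm} v \to v$. I claim that $A^+ \cap R \subset \Sing$: indeed, for $v \in A^+ \cap R$, pick $t_n \to +\infty$ with $f_{t_n} v \to v$; then
\[
\dK(v, \Sing) = \lim_{n \to \infty} \dK(f_{t_n} v, \Sing) = 0,
\]
so $v \in \overline{\Sing} = \Sing$ (recall $\Reg$ is open, hence $\Sing$ is closed). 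An identical argument using $t_n \to -\infty$ shows $A^- \cap R \subset \Sing$.

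Combining these inclusions, $A \cap R = (A^+ \cup A^-) \cap R \subset \Sing$, and since $\mu(A \cap R) = 1$ we conclude $\mu(\Sing) = 1$. Because $\Sing$ is closed and $\supp(\mu)$ is by definition the smallest closed set of full $\mu$-measure, this yields $\supp(\mu) \subset \Sing$. There is no real obstacle here: the work was already done in establishing Corollary \ref{lem:to-Sing}, and the only additional ingredient is the standard fact that almost every point of an invariant probability measure on a compact space is both forward and backward recurrent.
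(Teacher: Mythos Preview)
Your proof is correct and follows essentially the same route as the paper: combine Corollary~\ref{lem:to-Sing} with Poincar\'e recurrence to conclude that $\mu$-a.e.\ point with $\lambda(v)=0$ lies in $\Sing$, hence $\mu(\Sing)=1$ and $\supp(\mu)\subset\Sing$. The paper phrases the recurrence step as the contrapositive (a regular point with $\lambda=0$ cannot be both forward and backward recurrent), but the argument is the same.
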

\begin{proof}
By Corollary \ref{lem:to-Sing}, if $\lambda(v)=0$ and $v\in\Reg$, then $v$ cannot be both forward recurrent and backward recurrent. 
Since $\mu$-a.e.\ $v$ is both forward and backward recurrent, 
we see that $\mu(\Reg)=0$.
\end{proof} 

\begin{cor}\label{c.hyperbolic}
Any ergodic measure $\mu$ with $\mu(\Reg)=1$ is hyperbolic.
\end{cor}
\begin{proof}
Let $Z$ be the set of $v\in T^1M$ such that there exists 
$\xi \in T_v(T^1M)$ orthogonal to the flow direction with $\lim_{t \to \infty} \frac{1}{t} \log \|df_t \xi\| =0$.  Given such a $\xi$, the unstable Jacobi field $J_\xi$ along $\gamma_v$ satisfies $\lim_{t \to \infty} \frac{1}{t} \log \|J_\xi(t)\| =0$ by \eqref{eqn:unif-comp}.  By Lemma  \ref{l.lambda_rate}, it follows that $\lim_{t \to \infty} \frac{1}{t}\int_0^t \lambda(f_sv)\,ds =0$ for every $v\in Z$. If $\mu$ is not hyperbolic, then $\mu(Z)=1$, and thus $\frac 1t \int_0^t \lambda(f_sv)\,ds \to 0$ for $\mu$-a.e.\ $v$.  By the ergodic theorem, this implies that $\int \lambda\,d\mu = 0$, which in turn implies that $\lambda(v)=0$ for $\mu$-a.e.\ $v$ since $\lambda\geq 0$. By Corollary \ref{c.singular}, $\supp(\mu)\subset \Sing$. This contradicts the hypothesis that $\mu(\Reg)=1$, so we conclude that $\mu$ is hyperbolic.
\end{proof}

\subsection{Uniform estimates}\label{sec:Geta}
For $\eta>0$, we define
\begin{equation}
\Reg(\eta) = \{v: \lambda(v) \geq \eta\}.
\end{equation} Note that if $(v,t)\in \GGG( \eta)$ for some $t>0$, then $\lambda(v)\geq \eta$ and $\lambda(f_t v)\geq \eta$, and thus $v\in \Reg(\eta)$ and $f_tv \in \Reg(\eta)$. Note that $\Reg(\eta_1) \subset \Reg(\eta_2)$ if $\eta_1 \geq \eta_2$ and each $\Reg(\eta)$ is compact. 

\begin{lem} \label{l.angle}
For all $\eta >0$, there exists $\theta>0$ so that for any $v \in \Reg(\eta)$,
we have $\measuredangle(E^u(v),E^s(v)) \geq \theta$.
\end{lem}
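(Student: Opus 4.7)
The plan is a direct compactness argument exploiting the continuity of $\lambda$ and of the bundles $E^u, E^s$. The first step is to observe that $\Reg(\eta) = \lambda^{-1}([\eta,\infty))$ is closed in the compact space $T^1M$, hence compact, since $\lambda$ is continuous (Definition~\ref{d.lambda} and the remark immediately after it). Moreover $\Reg(\eta) \subset \Reg$: by Corollary~\ref{lem.lambda}, $\lambda$ vanishes on $\Sing$, so $\lambda(v)\geq \eta > 0$ forces $v \notin \Sing$.

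Next, consider the minimum-principal-angle function $\Theta(v) := \measuredangle(E^u(v),E^s(v))$, with the convention that $\Theta(v)=0$ precisely when $E^u(v)\cap E^s(v) \neq \{0\}$. By the list of properties of $E^u,E^s$ in \S\ref{sec:geometry} (namely the bullet point stating that $E^u$ and $E^s$ intersect if and only if $v \in \Sing$), this vanishing occurs exactly on $\Sing$; hence $\Theta > 0$ on $\Reg$, and in particular on all of $\Reg(\eta)$.

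The key remaining point is continuity of $\Theta$ on $T^1M$. Since $E^u$ and $E^s$ are continuous subbundles of $TT^1M$ of the common fixed dimension $n-1$, this is standard: the smallest principal angle between two equidimensional subspaces of an inner product space depends continuously on the pair in the Grassmannian topology. Granting this, $\Theta$ is a continuous, strictly positive function on the compact set $\Reg(\eta)$ and therefore attains a strictly positive minimum; take $\theta$ to be this minimum.

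I do not expect a serious obstacle: the whole proof is a short compactness argument, and the only mildly delicate ingredient is the continuity of the angle function, which follows from continuity of the subbundles together with the fact that $\dim E^u = \dim E^s = n-1$ is constant on $T^1M$. Everything else is read off directly from the preliminaries.
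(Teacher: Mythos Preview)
Your proof is correct and follows essentially the same approach as the paper: the paper's proof is the one-line observation that the angle is continuous in $v$ and positive on $\Reg(\eta)$, which by compactness of $\Reg(\eta)$ gives the uniform lower bound. You have simply spelled out the details of this compactness argument.
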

\begin{proof}
The angle is continuous in $v$ and positive on $\Reg(\eta)$.
\end{proof}
\begin{lem}\label{lem:dense}
$\{v : \lambda(v)>0\} = \bigcup_{\eta>0} \Reg(\eta)$ is dense in $T^1 M$.
\end{lem}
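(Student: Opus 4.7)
The set-theoretic equality $\{v:\lambda(v)>0\}=\bigcup_{\eta>0}\Reg(\eta)$ is immediate from the definition $\Reg(\eta)=\{v:\lambda(v)\geq\eta\}$, so the content of the lemma is density. Equivalently, I need to show that the closed set $Z:=\{v\in T^1M:\lambda(v)=0\}$ has empty interior. The plan is to exhibit, in every nonempty open set, a point of $\Reg$ that is both forward and backward recurrent, and then invoke Corollary \ref{lem:to-Sing} to rule out $\lambda=0$.

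Recall from \S\ref{sec:geometry} that $\Reg$ is open and dense in $T^1M$, and that the set $D$ of vectors with both forward and backward dense orbits under $\FFF$ is a dense $G_\delta$. Therefore, given any nonempty open set $U\subset T^1M$, the intersection $U\cap\Reg\cap D$ is a dense $G_\delta$ in $U\cap\Reg$, hence nonempty. Pick $v\in U\cap\Reg\cap D$.

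Since $v\in\Reg$, we have $\delta_0:=\dK(v,\Sing)>0$. Since $v$ has dense forward and backward orbits in the compact space $T^1M$, it is both forward and backward recurrent, so there exist sequences $t_n^\pm\to\pm\infty$ with $f_{t_n^\pm}v\to v$, and in particular $\dK(f_{t_n^\pm}v,\Sing)\to\delta_0$. Now suppose for contradiction that $\lambda(v)=0$. By Corollary \ref{lem:to-Sing}, either $\dK(f_t v,\Sing)\to 0$ as $t\to+\infty$ or as $t\to-\infty$; in either case this contradicts the recurrence of $v$ to the $\delta_0/2$-neighborhood of itself in forward/backward time. Hence $\lambda(v)>0$, so $v\in U\cap\bigcup_{\eta>0}\Reg(\eta)$, proving density.

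The argument is essentially a corollary of the tools already assembled (Corollary \ref{lem:to-Sing}, density of $\Reg$, and topological transitivity of the geodesic flow), so there is no genuine obstacle; the only subtle point is making sure to intersect the dense $G_\delta$ of bi-recurrent vectors with the open dense set $\Reg$ before applying Corollary \ref{lem:to-Sing}.
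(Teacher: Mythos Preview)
Your proof is correct and follows essentially the same argument as the paper: both combine Corollary~\ref{lem:to-Sing} with the existence of points having forward- and backward-dense orbits. The paper is marginally more economical---it takes a single such $v$ and notes that its entire (dense) orbit lies in $\{\lambda>0\}$---and your intersection with $\Reg$ is redundant, since any $v$ with dense orbit already lies in $\Reg$ ($\Sing$ being closed and invariant).
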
 
\begin{proof}
Suppose the lemma is false. Then $\{v \in T^1M: \lambda^u(v) = 0\} \cup \{v \in T^1M: \lambda^s(v) = 0\}$ has interior. Since these sets are closed, at least one of them has interior. Assume $\{v \in T^1M: \lambda^u(v) = 0\}$ has interior; the other case is similar.
If $(X, \FFF)$ is a transitive flow whose non-wandering set is $X$, as is the case in our setting \cite[Remark 4.16]{eb73}, then there exists  $x_0 \in X$  with dense forward orbit \cite[Theorem 5.10]{Wa}. Thus, there exists a vector $w$ whose forward  orbit under the geodesic flow is dense in $T^1M$ and therefore enters $\{v \in T^1M: \lambda^u(v) = 0\}$ for arbitrarily large $t$. It follows from Lemma \ref{lem:Sing-lambda} that $w \in \Sing$. But $w \in \Reg$ because its forward orbit also enters $\Reg$.
\end{proof}



To go from the Jacobi field estimates in Lemma \ref{l.lambda_rate} to local estimates near orbit segments in $\GGG(\eta)$, we can use uniform continuity of $\lambda$.
Let $\Lambda$ be the maximum eigenvalue of $\UUU^u(v)$ taken over all $v\in T^1M$, as in \S \ref{sec:lambda-def}.
Given $\eta>0$, let $\delta=\delta(\eta)>0$ be small enough so that for  any $v,w\in T^1M$,
\begin{equation}\label{eqn:delta0}
\dK(v,w) < \delta e^{\Lambda} \Rightarrow |\lambda(v) - \lambda(w)| \leq \frac\eta2.
\end{equation}  
In particular, by \eqref{eqn:dtdcs} and \eqref{eqn:dKdu},
this applies if $w\in W_\delta^s(v)$ or $w\in W_\delta^u(v)$.
Define $\tl\colon T^1 M \to [0,\infty)$ by $\tl(v) = \max(0, \lambda(v) - \frac\eta2)$, and observe that 
\begin{equation}\label{eqn:delta}
\lambda(w) \geq \tl(v) \text{ for every } v,w\in T^1M \text{ with } \dK(v,w) < \delta.
\end{equation}
In particular, if $w\in B_T(v,\delta)$ then
\begin{equation}\label{eqn:delta-2}
\int_0^T \lambda(f_t w)\,dt \geq \int_0^T \tl(f_t v)\,dt \geq \int_0^T \lambda(f_t v)\,dt - \frac\eta2T.
\end{equation}

\begin{lem}\label{lem:integrate}
Given $\eta >0$ and $\delta= \delta(\eta)$ such that \eqref{eqn:delta0} holds, $v\in T^1M$, and $w,w'\in W_\delta^s(v)$, we have the following for every $t\geq 0$:
\begin{equation}\label{eqn:Ws-contract-1}
d^s(f_t w, f_t w') \leq d^s(w,w') e^{-\int_0^t \tl(f_\tau v)\,d\tau}.
\end{equation}
Similarly, if $w,w'\in W_\delta^u(v)$, then for every $t\geq 0$ we have
\begin{equation}\label{eqn:Wu-contract-1}
d^u(f_{-t} w, f_{-t} w') \leq d^u(w,w') e^{-\int_0^t \tl(f_{-\tau} v)\,d\tau}.
\end{equation}
\end{lem}
\begin{proof}
We prove \eqref{eqn:Ws-contract-1}; \eqref{eqn:Wu-contract-1} is similar.
Recalling the definition of $d^s$ in \eqref{eqn:ds}, let $\zeta \colon [0,1] \to W^s_\delta(v)$ be a curve that connects $w$ and $w'$. It follows from Lemma \ref{lem:monotonic-leaf} that the curve $f_t \zeta$ lies in $W_\delta^s(f_tv)$. 
We want to compare the lengths $\ell(\pi \zeta)$ and $\ell(\pi f_{t} \zeta)$.  For each $r\in [0,1]$, the vector $\zeta(r) \in T^1 M$ determines a geodesic $\gamma_r$ that is normal to the stable horosphere $\pi W_\delta^s(v)$; this one-parameter family of geodesics gives a family of stable Jacobi fields $J_r \in \JJJ^s(\gamma_r)$.  By Lemma \ref{l.lambda_rate}, these satisfy
\[
\|J_r(t)\| \leq e^{-\int_0^t \lambda(\dot\gamma_r(\tau))\,d\tau} \|J_r(0)\|
\leq e^{-\int_0^t \tl(f_\tau v)\,d\tau} \|J_r(0)\|.
\]
Integrating over $r\in [0,1]$ gives
$
\ell(\pi f_t\zeta) 
\leq e^{-\int_0^t \tl(f_\tau v)\,d\tau} \ell(\pi\zeta).
$
By \eqref{eqn:ds}, taking an infimum over all such $\zeta$ gives \eqref{eqn:Ws-contract-1}.
\end{proof}

When $(v,T)\in \GGG(\eta)$, 
the following  is an immediate corollary of \eqref{eqn:delta}, \eqref{eqn:delta-2}, and Lemma \ref{lem:integrate}.

\begin{cor}\label{lem:Geta}
Given $\eta >0$ and $\delta= \delta(\eta)$  such that \eqref{eqn:delta0} holds and $(v,T)\in \GGG(\eta)$, every $w\in B_T(v,\delta)$ has $(w,T)\in \GGG(\frac\eta2)$.  Moreover,  
for every $w,w'\in W_\delta^s(v)$ and $0\leq t\leq T$ we have
\begin{equation}\label{eqn:Ws-contract}
d^s(f_t w, f_t w') \leq d^s(w,w') e^{-\frac{\eta}{2} t},
\end{equation}
and for every $w,w'\in f_{-T}W_\delta^u(f_{T}v)$ and $0\leq t\leq T$, we have
\begin{equation}\label{eqn:Wu-contract}
d^u(f_t w, f_t w') \leq d^u(f_{T}w,f_{T}w') e^{-\frac{\eta}{2}(T - t)}.
\end{equation}
\end{cor}

For $v$ with $\lambda(v)$ uniformly positive, uniform continuity of $\lambda$ gives a small but definite amount of contraction/expansion in a neighborhood of $v$, and we obtain the following consequence of Lemma \ref{lem:integrate}.

\begin{cor}\label{hyplem}
Given $\eta >0$
 and $\delta= \delta(\eta)$ such that \eqref{eqn:delta0} holds, for every $t_0>0$ there exists $\alpha \in (0,1)$ such that if 
$v\in \Reg(2\eta)$ and $v'\in B(v,\delta)$, we have
\begin{equation}\label{eqn:contracts}
d^s(f_t w, f_tw') \leq \alpha d^s(w,w') \text{ for all } 
w,w'\in W_\delta^s(v') \text{ and } t\geq t_0,
\end{equation}
and similarly
\begin{equation}\label{eqn:expands}
d^u(f_{-t} w, f_{-t}w') \leq \alpha d^u(w,w') \text{ for all } 
w,w'\in W_\delta^u(v') \text{ and } t\geq t_0.
\end{equation}
\end{cor}
\begin{proof}  
Let $t_1>0$ be such that $\dK(v,f_\tau v) <\delta$ for all $v\in T^1M$ and $|\tau|<t_1$. Then for $v'\in B(v,\delta)$ and $|\tau|<t_1$, $f_\tau v' \in \Reg(\eta)$  since by \eqref{eqn:delta0}, $|\lambda(v)-\lambda(f_tv')| \leq |\lambda(v)-\lambda(v')|+ |\lambda(v')-\lambda(f_tv')| \leq \eta$. Thus, 
both $\int_0^t \tl(f_\tau v')\,d\tau$ and $\int_0^t \tl(f_{-\tau} v')\,d\tau$ are bounded below by $\min\{t_1, t\} \frac\eta2$. Applying Lemma \ref{lem:integrate} proves \eqref{eqn:contracts} and \eqref{eqn:expands} with $\alpha = e^{-\min\{t_0, t_1\} \frac \eta 2}$.
\end{proof}

We establish a uniform growth property for local stable and unstable manifolds of vectors in a compact subset of the regular set.

\begin{prop}\label{prop:weak-expansivity}
For every $\epsilon,R > 0$ and every compact $X\subset \Reg$,
there exists $T > 0$ such that for every $w \in X$ and $t \geq T$, we have $f_t W^u_\epsilon(w) \supset W^u_R(f_tw)$ and $f_{-t} W^s_\epsilon(w) \supset W^s_R(f_{-t}w)$.
\end{prop}
\begin{proof}
We prove the first assertion; the second is similar. 
Consider the compact set
\[
Z = \{(v,w) \in T^1M \times T^1M \mid w \in X, v \in W^u(w), d^u(v,w) = \epsilon\}.
\]
Since $f_t$ is a homeomorphism between $W^u(w)$ and $W^u(f_t w)$ for each $t>0$, it suffices to exhibit $T$ such that for any $(v,w)\in Z$, we have $d^u(f_tv, f_tw) \geq R$ for all $t\geq T$.  For $t>0$, define $\Delta_t \colon Z\to (0,\infty)$ by $\Delta_t(v,w) = d^u(f_tv, f_t w)$.  Then each $\Delta_t$ is continuous, and $t\mapsto \Delta_t(v,w)$ is nondecreasing for each $(v,w)\in Z$ by Lemma \ref{lem:monotonic-leaf}.  If $\lim_{t\to\infty} \Delta_t(v,w) < \infty$, then $d^u(f_tv,f_tw)$ is bounded for all $t\in \RR$. The Flat Strip Theorem \cite[Proposition 1.11.4]{Ebe:96}
implies that $\gamma_v$ and $\gamma_w$ bound a flat strip, which cannot happen since $w\in \Reg$.  It follows that the functions $\Delta_t$ converge pointwise to $\infty$ monotonically on the compact domain $Z$. By Dini's theorem, this convergence must be uniform.  In particular, there exists $T>0$ such that $\Delta_t(v,w) \geq R$ for all $(v,w)\in Z$ and $t\geq T$, which completes the proof.
\end{proof}

Since $\Reg(\eta)$ is compact, the following corollary is immediate.
\begin{cor}\label{cor:weak-expansivity}
For every $\eta,\epsilon,R > 0$,
there exists $T > 0$ such that for every $w \in \Reg(\eta)$ and $t \geq T$, we have $f_t W^u_\epsilon(w) \supset W^u_R(f_tw)$ and $f_{-t} W^s_\epsilon(w) \supset W^s_R(f_{-t}w)$.
\end{cor}

\section{The specification property and closing lemma} \label{s.spec}

The following result verifies condition \ref{cond:spec} from Theorem \ref{t.abstract} by proving the specification property for a collection of orbit segments that contains $\GGG(\eta)$. We prove a stronger specification property than needed for Theorem \ref{t.abstract}. The stronger version, which is discussed after Definition \ref{def:spec}, is required for our equidistribution results in \S \ref{s.geodgeneral}.

\begin{thm} \label{specgeoflow} 
 Let $\FFF$ be the geodesic flow on a closed rank $1$ manifold $M$, and let $\eta>0$. Let $\CCC(\eta)$ be the set of orbit segments for $(T^1M, \FFF)$ that both start and end in $\Reg(\eta)$, that is:
\[
\CCC(\eta) = \{ (x, t) \in T^1M \times (0, \infty) : x \in \Reg(\eta) \text{ and } f_tx \in \Reg(\eta) \}.
\]
Then for every $\rho>0$, there exists $T>0$ such that given $(v_1,t_1),\dots, (v_k,t_k) \in \CCC(\eta)$ and $T_1,\dots, T_k \in \RR$ with $T_{j+1} - T_j \geq t_j + T$ for all $1\leq j < k$, there is $w\in T^1M$ such that for all $1\leq j\leq k$, we have $f_{T_j}w \in B_{t_j}(v_j,\rho)$. 
\end{thm}
We remark that we need the specification property for $\CCC(\eta)$, rather than $\GGG(\eta)$, in our proof of the pressure gap in \S \ref{s.entropygap}.
The proof of Theorem \ref{specgeoflow} is based on uniformity of the local product structure for the foliations $W^u$, $W^{cs}$ at the endpoints of orbits in $\CCC(\eta)$.
To make this idea precise, we define local product structure at a point for  a fixed scale and distortion constant. 
We work with the Knieper metric $\dK$ from \eqref{eqn:dK} and the leafwise metrics $d^s$ and $d^u$ from \eqref{eqn:ds}.  Throughout, $B(v,\delta)$ denotes the ball in the Knieper metric $\dK$.
\begin{defn}\label{def:lps}
The foliations $W^u$, $W^{cs}$ have local product structure $($LPS$)$  with constant $\kappa\geq 1$ in a $\delta$-neighborhood of $v\in T^1M$ if for every $\eps\in (0,\delta]$ and all $w_1,w_2 \in B(v,\eps)$, the intersection $W_{\kappa \eps}^u(w_1) \cap W_{\kappa \eps}^{cs}(w_2)$ contains a single point, which we denote by $[w_1,w_2]$, and if moreover we have
\begin{align*}
d^u(w_1, [w_1,w_2]) &\leq \kappa \dK(w_1,w_2), \\
d^{cs}(w_2, [w_1,w_2]) &\leq \kappa \dK(w_1,w_2).
\end{align*}
\end{defn}
It is clear that if $W^u$, $W^{cs}$ have LPS with constant $\kappa$ in a $\delta$-neighborhood of $v\in T^1M$, then for every $\eps \in (0,\delta]$, they have LPS with constant $\kappa$ in an $\eps$-neighborhood of $v$. We also have the following elementary lemma.
\begin{lem}\label{lem:lps}
 Let $w\in B(v,\delta/2)$ and suppose that $W^u$, $W^{cs}$ have LPS with constant $\kappa$ in a $\delta$-neighborhood of $v\in T^1M$. Then $W^u$, $W^{cs}$ have LPS with constant $2\kappa$ in a $\delta/2$-neighborhood  of $w$.
\end{lem}
\begin{proof}
Observe that if $\eps\in (0,\delta/2]$ and $w\in B(v,\delta/2)$, then for every $w_1,w_2\in B(w,\eps) \subset B(v,\delta)$, the hypothesis gives a unique point $[w_1,w_2] \in W_{\kappa\delta}^u(w_1) \cap W_{\kappa\delta}^{cs}(w_1)$; moreover, $d^u(w_1,[w_1,w_2]) \leq \kappa \dK(w_1,w_2) \leq \kappa(2\eps)$, with a similar bound on $d^{cs}(w_2,[w_1,w_2])$. Thus, $[w_1,w_2] \in W_{2\kappa\eps}^u(w_1) \cap W_{2\kappa\eps}^{cs}(w_2)$.
\end{proof}

\begin{lem}\label{lem:lps2}
For every $\eta>0$, there exist $\delta>0$ and $\kappa\geq 1$ such that at every $v\in \Reg(\eta)$, the foliations $W^u, W^{cs}$ have LPS with constant $\kappa$ in a $\delta$-neighborhood of $v$. 
\end{lem}
\begin{proof}
Lemma \ref{l.angle} gives a uniform lower bound on $\measuredangle(E^u(v),E^s(v))$ for $v \in \Reg(\eta)$. Since $E^c$ is orthogonal to $E^u$, this also gives a uniform lower bound on the angle between $E^u$ and $E^{cs}$.  Since horospheres are uniformly $C^2$ \cite{HI77}, the leaves $W^u,W^{cs}$ are uniformly $C^1$, and the result follows.
\end{proof}
We can define local product structure for the foliations $W^{cu}, W^s$ analogously to Definition \ref{def:lps}, and the statement of Lemma \ref{lem:lps2} is obtained analogously for the foliations $W^{cu}, W^s$.

\begin{prop} \label{prop:denseleaf}
Given $\eta>0$, there exists $\delta>0$ such that for any $\rho\in (0,\delta]$, there exists $T>0$ such that for every $v,w\in \Reg(\eta)$ and $v'\in B(v,\delta)$, $w'\in B(w,\delta)$, we have $f_t(W_\rho^u(v')) \cap W_\rho^{cs}(w') \neq\emptyset$ for every $t\geq T$.
\end{prop}
\begin{proof}
By Lemma \ref{lem:lps2}, there exist $\delta>0$ and $\kappa \geq 1$ such that at every $v\in \Reg(\eta)$, the foliations $W^u, W^{cs}$  have LPS with constant $\kappa$ in a $2\delta$-neighborhood of $v$.
 By Lemma \ref{lem:lps}, the foliations $W^u, W^{cs}$  have LPS with constant $2\kappa$ in a $\rho$-neighborhood of $v'$ and $w'$ for any $\rho \in(0, \delta]$. By Lemma \ref{lem:unif-dense}, there exists $R>0$ such that $W_{R-\rho}^u(f_t v')$ is $\rho/(2\kappa)$-dense in $T^1M$ for every $v'\in T^1M$ and $t\in \RR$, so by the local product structure, we have $W_R^u(f_t v') \cap W_\rho^{cs}(w') \neq \emptyset$.  By Corollary \ref{cor:weak-expansivity}, there exists $T>0$ such that $f_t(W_\rho^u(v')) \supset W_R^u(f_t v')$ for every $v\in \Reg(\eta)$, $v'\in B(v,\delta)$, and $t\geq T$, which completes the proof.
 \end{proof}

In particular, if $(v, s), (w, t)  \in \CCC(\eta)$, then Proposition \ref{prop:denseleaf} applies at $f_sv$ and $w$. We are now ready to prove the specification property on $\CCC(\eta)$.


\begin{proof}[Proof of Theorem \ref{specgeoflow}]
Fix $\eta,t_0>0$.  By Corollary \ref{hyplem} there are $\eps>0$ and $\alpha\in (0,1)$ such that whenever $v$ is within $\eps$ of $\Reg(\eta)$, for every  $w,w'\in W_\delta^u(v)$, and $t\geq t_0$, we have 
\begin{equation}\label{eqn:u-contracts}
d^u(f_{-t} w, f_{-t} w') \leq \alpha d^u(w,w').
\end{equation}
Let $\delta>0$ be as in Proposition \ref{prop:denseleaf} and fix $0 < \rho < \min(\delta,\eps)$.  Let
\begin{equation}\label{eqn:rho'}
\rho' = \rho e^{-\Lambda} (1-\alpha)/2.
\end{equation}
By Proposition \ref{prop:denseleaf}, there exists $T\geq t_0$ such that $f_t(W_{\rho'}^u(v))$ intersects $W_{\rho'}^{cs}(w)$ whenever $t\geq T$ and  $v,w$ are within $\delta$ of $\Reg(\eta)$.  

Given any
$(v_1, t_1), \dots, (v_k, t_k)\in \CCC(\eta)$ and $T_1,\dots, T_k \in \RR$ with $T_{j+1} - T_j \geq t_j + T$, we construct points $w_j$ iteratively 
such that $f_{T_i}w \in B_{t_i}(v_i,\rho)$ for all $i \in\{1, \ldots, j\}$. Then $w_k$ will satisfy the conclusion of Theorem \ref{specgeoflow}.  Without loss of generality we assume that $T_1=0$.

Start by letting $w_1 = v_1$ and $s_1 = t_1$. Since $T_2 = T_2 - T_1 \geq t_1 + T = s_1 + T$, we can apply Proposition \ref{prop:denseleaf} at $f_{s_1}w_1\in \Reg(\eta)$ and $v_2\in \Reg(\eta)$ to obtain
\[
(f_{T_2 - s_1} (W_{\rho'}^u(f_{s_1}w_1))) \cap W_{\rho'}^{cs}(v_2) \neq 0.
\]
In particular, there exists $w_2\in W^u(v_1)$ such that
\[
f_{s_1}w_2 \in W_{\rho'}^u(f_{s_1}w_1)
\quad\text{ and }\quad
f_{T_2}w_2 \in W_{\rho'}^{cs}(v_2).
\]
We iterate this procedure to obtain a sequence of points $w_j \in W^u(v_1)$ such that writing $s_j = T_j + t_j$, we have
\begin{align}
\label{eqn:zj}
f_{s_j}w_{j+1} &\in W_{\rho'}^u(f_{s_j}(w_j))
&&\text{and} &
f_{T_{j+1}}w_{j+1}&\in W_{\rho'}^{cs}(v_{j+1}).
\end{align}
To guarantee that such points and times exist for all $1\leq j\leq k$, we observe that once $w_j$ is chosen, we have 
\[
f_{s_j}w_j  = f_{t_j} (f_{T_j}w_j) \in f_{t_j} W_{\rho'}^{cs}(v_j) \subset W_{\rho'}^{cs}(f_{t_j} v_j).
\]
 Since $\rho' < \delta$, this gives $\dK(f_{s_j}w_j, f_{t_j} v_j) < \delta$. Using the fact that $T_{j+1} - s_j = T_{j+1} - T_j - t_j \geq T$, Proposition \ref{prop:denseleaf} provides $w_{j+1}$ satisfying \eqref{eqn:zj}.

We prove by induction in $j-i$ that given any $1\leq i \leq  j < k$, we have
\begin{equation}\label{eqn:du-ij}
d^u(f_{s_i} w_{j+1}, f_{s_i} w_j) \leq \alpha^{j-i} \rho'.
\end{equation}
Note that the base case $i=j$ follows immediately from \eqref{eqn:zj}.  For the inductive step we need the following lemma.

\begin{lem}\label{lem:get-near-vi}
Suppose $\ell \in \{0,1,\dots, k\}$ has the property that \eqref{eqn:du-ij} holds for all $1\leq i \leq j < k$ with $j-i \leq \ell$.  Then for any $1\leq i\leq j \leq k$ with $j-i\leq \ell$, we have
$d_{t_i}(f_{T_i} w_j, v_i) < \rho < \eps$.
\end{lem}
\begin{proof}
Using the triangle inequality and the fact that $d^u$ and $d^{cs}$ are respectively nondecreasing and nonincreasing, we get
\begin{equation}\label{eqn:dti}
d_{t_i}(f_{T_i} w_j, v_i)
\leq d^{cs}(f_{T_i} w_i, v_i) + e^\Lambda d^u(f_{s_i} w_j, f_{s_i} w_i).
\end{equation}
By the second half of \eqref{eqn:zj}, we have 
\begin{equation}\label{eqn:dcs}
d^{cs}(f_{T_i} w_i, v_i) \leq \rho'.
\end{equation}
Using the hypothesis on \eqref{eqn:du-ij}, we have
\[
d^u(f_{s_i} w_j, f_{s_i} w_i) \leq \sum_{m=i}^{j-1} d^u(f_{s_i} w_{m+1}, f_{s_i} w_m) \leq \sum_{m=i}^{j-1} \alpha^{m-i} \rho'
\leq \rho'(1-\alpha)^{-1}.
\]
Together with \eqref{eqn:dti}, \eqref{eqn:dcs}, and the definition of $\rho'$ in \eqref{eqn:rho'}, this gives
\[
d_{t_i}(f_{T_i} w_j,v_i) \leq \rho' + e^{\Lambda} \rho'(1-\alpha)^{-1} < \rho/2 + \rho/2 = \rho,
\]
which proves Lemma \ref{lem:get-near-vi}.
\end{proof}

Now suppose that $\ell \in \{0,1,\dots, k-1\}$ is such that \eqref{eqn:du-ij} holds for all $1\leq i\leq j < k$ with $j-i \leq \ell$.  Fix $1\leq i\leq j < k$ with $j-i = \ell+1$.  It follows from Lemma \ref{lem:get-near-vi} that $f_{s_{i+1}} w_j \in B(f_{t_{i+1}} v_{i+1},\eps)$.  Moreover, since $\alpha^\ell \rho' \leq \rho' < \rho < \delta$, \eqref{eqn:du-ij} gives $f_{s_{i+1}} w_{j+1} \in W_\delta^u(f_{s_{i+1}} w_j)$.
Observing that $f_{t_{i+1}} v_{i+1} \in \Reg(\eta)$ and $s_{i+1} - s_i \geq T \geq t_0$, we can apply \eqref{eqn:u-contracts} to obtain
\begin{equation}\label{eqn:ij-alpha}
d^u(f_{s_i} w_{j+1}, f_{s_i} w_j) \leq \alpha
d^u(f_{s_{i+1}} w_{j+1}, f_{s_{i+1}} w_j)
\leq \alpha^{j-i} \rho'.
\end{equation}
This completes the inductive step and shows that \eqref{eqn:du-ij} holds for all $1\leq i\leq j < k$.  Applying Lemma \ref{lem:get-near-vi} with $j=k$ proves that $w_k$ satisfies the conclusion of Theorem \ref{specgeoflow}. Since $\rho$ can be taken arbitrarily small, this completes the proof.
\end{proof}

We now prove a closing lemma for orbit segments in $\CCC(\eta)$. 

\begin{lem} \label{closing}
For all $\rho,\eta,\wiggle>0$, there exists $T$ so that for every $(v,t)\in \CCC(\eta)$, there exists $w\in B_t(v,\rho)$ and $\tau\in [T-\wiggle,T+\wiggle]$ such that $f_{t+\tau} w = w$.
\end{lem}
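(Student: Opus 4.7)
The plan is a two-stage argument: specification produces an approximately closed orbit, and the uniform hyperbolic structure at the endpoints allows us to close it up to an honest periodic orbit.

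Stage one uses specification. Fix $\rho \in (0, \eps)$ small (to be specified) and apply Theorem~\ref{specgeoflow} to the pair $(v,t), (v,t) \in \CCC(\eta)$ at scale $\rho$. This produces a point $w \in T^1M$ and a transition time $\tau_1 \in [0, \tau(\rho)]$ such that $w \in B_t(v, \rho)$ and $F(w) := f_{t+\tau_1}(w) \in B_t(v, \rho)$. In particular $w$ and $F(w)$ lie within Knieper distance $\rho$ of $v$ and within $2\rho$ of each other, so the orbit segment $(w, t+\tau_1)$ is approximately closed.

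Stage two leverages the hyperbolicity inherited from $v, f_t v \in \Reg(\eta)$. By Lemma~\ref{l.angle} and the discussion preceding Corollary~\ref{cor:denseleaf}, there is a uniform LPS scale $\delta(\eta) > 0$ with distortion $\kappa(\eta)$ at every point of $\Reg(\eta)$; by continuity of $\lambda$ this persists in a neighborhood of $v$, and in particular at $w$. Moreover Lemma~\ref{hyplem} shows that $df_{t+\tau_1}$ expands unstable Jacobi fields and contracts stable ones at $v$ by factor at least $1 + \eta t$. Choose $\rho$ small relative to $\delta(\eta)$ and to $1/(1+\eta t)$. Then $F$ acts on the LPS chart at $w$ as a near-hyperbolic map with $w$ as an approximate fixed point. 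A standard flow-version Anosov closing argument --- set up a Poincar\'e section $\Sigma$ transverse to the flow at $v$, build a return map $R \colon \Sigma_0 \to \Sigma$ that tracks the flow for time approximately $t+\tau_1$ and projects back to $\Sigma$ along the flow direction, and apply the hyperbolic fixed point theorem to $R$ --- produces a true fixed point $p$ of $R$ within $O(\rho)$ of $w$. This $p$ lies on a closed orbit of period $t + \tau$ for some $\tau$ close to $\tau_1$, and shrinking $\rho$ guarantees $p \in B_t(v, \eps)$. Since $\tau \le \tau(\rho) + O(\rho)$, we may set $T(\eps) = \tau(\rho) + 1$.

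The main obstacle is the hyperbolic closing in stage two: because the orbit segment $(v,t)$ may pass through $\Sing$, there is no uniform hyperbolicity along its length, only at the two endpoints. The hyperbolicity supplied by Lemma~\ref{hyplem} is only polynomial (linear in $t$) growth extracted from the regularity at the base point, so $\rho$ has to be chosen small relative to $1/(1+\eta t)$ for the fixed-point argument to contract. For short orbit segments where $\eta t$ does not yet dominate, one can first concatenate $N$ copies of $(v,t)$ using Proposition~\ref{cor:spec} and close up the resulting longer approximately closed orbit; the total length $N(t+\tau_1)$ can be made large enough that Lemma~\ref{hyplem} bites, and the resulting periodic orbit of period $\approx N(t+\tau_1)$ still shadows $(v,t)$ at scale $\eps$ on its first loop.
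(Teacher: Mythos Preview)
Your two-stage outline---specification to produce an approximately closed orbit, then a closing argument---is the right shape, but Stage~2 as written has a real gap, and the paper closes that gap by a different mechanism.

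The difficulty you correctly identify is that the only hyperbolicity available is at the endpoints $v, f_t v \in \Reg(\eta)$, and Lemma~\ref{hyplem} gives only linear growth $1+\eta t$. Your proposed fix is to let $\rho$ depend on $1/(1+\eta t)$, or else to concatenate $N$ copies of $(v,t)$ when $t$ is short. The first option breaks uniformity of $T$ in $t$, since $\tau(\rho)$ would then depend on $t$. The second option could in principle work, but you have not actually carried out the closing step: invoking ``a standard flow-version Anosov closing argument'' is not enough here, because the flow is \emph{not} Anosov along $(v,t)$, and the usual proofs of that lemma iterate hyperbolicity along the whole orbit, not just at one point. You would need to show explicitly that the return map on your section has a fixed point and that the resulting periodic orbit $\eps$-shadows $(v,t)$ (not just starts near $v$); neither is automatic from the endpoint estimate alone.

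The paper sidesteps all of this by inserting a \emph{fixed} reference segment $(v_0,t_0)\in\CCC(\eta)$, chosen once and for all with a definite expansion factor $\alpha>1$ on $W^u$ and contraction $\alpha^{-1}$ on $W^s$. Specification is used to build an orbit that shadows $(v,t)$, then $n$ consecutive copies of $(v_0,t_0)$, then $(v,t)$ again, where $n$ is chosen so that $\alpha^n > 2\kappa$. The hyperbolicity now comes entirely from the $n$ passes through $(v_0,t_0)$ and is therefore uniform in $(v,t)$; the transition time is bounded by $n(t_0+T_0)+T_0$, again independent of $(v,t)$. The closing is then done by two applications of the Brouwer fixed point theorem: first to the map $u \mapsto W^s_\eps(w_0) \cap W^{cu}_\eps(f_{t+\tau}u)$ on a local stable disc, producing $w_1 \in W^{cu}(f_{t+\tau}w_1)$, and then to $f_{-t-\tau-r}$ on a local unstable disc through $w_1$. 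The shadowing $w\in B_t(v,\eps)$ follows directly from the $W^s$/$W^u$ relationships.

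So the key idea you are missing is: do not try to extract the closing hyperbolicity from $(v,t)$ itself; manufacture it uniformly by splicing in a fixed good segment.
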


\begin{proof}
We follow the proof of the Anosov closing lemma based on the Brouwer fixed point theorem. By Lemma \ref{lem:lps2}, there exists $\delta_0>0$ and $\kappa \geq 1$ such that for $(v,t)\in \CCC(\eta)$, the foliations $W^u,W^{cs}$ have local product structure with constant $\kappa$ in a $\delta_0$-neighborhood of both $v$ and $f_tv$, and so do the foliations $W^s$, $W^{cu}$.

Fix $(v_0,t_0) \in \CCC(\eta)$.  By Corollary \ref{hyplem}, there exists $\eps>0$ and $\alpha\in (0,1)$ such that if $v'\in B(v_0,\eps) \cup B(f_{t_0} v_0,\eps)$, then for all $w,w'\in W_\eps^s(v')$ and $t\geq t_0$, we have
\begin{equation}\label{eqn:per-contracts}
d^s(f_t w, f_t w') \leq \alpha d^s(w,w'),
\end{equation}
and for all $w,w' \in W_\eps^u(v')$ and $t\geq t_0$, we have
\begin{equation}\label{eqn:per-expands}
d^u(f_{-t} w, f_{-t} w') \leq \alpha d^u(w,w').
\end{equation}
Let $\delta>0$ be as in Proposition \ref{prop:denseleaf}.  Without loss of generality, we assume that $\rho \leq \min(\eps, \delta_0, \delta,\wiggle)$.

By Theorem \ref{specgeoflow}, $\CCC(\eta)$ has specification at scale $\rho/(16\kappa)$; let $T_0$ be the transition time.  Let $n\in \mathbb{N}$ satisfy $2\kappa \alpha^n <1$.  By the specification property, there is a point $w_0$ whose forward orbit $\rho/(16\kappa)$-shadows first $(v,t)$, then $(v_0,t_0)$, then $(v_0,t_0)$ again, and so on until $(v_0,t_0)$ has been shadowed $n$ times, and then finally shadows $(v,t)$ once more.  In particular, we have $d_t(v,w_0) < \rho/(16\kappa)$, and $f_{t+T}(w_0) \in B(r,\rho/(16\kappa))$ for $T=n (t_0 + T_0) + T_0$, so $\dK(w_0,f_{t+T}w_0) < \rho/(8\kappa)$.  Assume that $n$ is chosen large enough that $T > 1+ \sigma$.


Fix $w_0$ and consider the map $W_{\rho/4}^{s}(w_0) \to W_{\rho/4}^s(w_0)$ defined by $u\mapsto W_{\rho/4}^{s}(w_0) \cap W_{\rho/4}^{cu}(f_{t+T}u)$.  This is well-defined because for all $u\in W_{\rho/4}^s(w_0)$, we can apply \eqref{eqn:per-contracts} at each of the $n$ times the orbit segment shadows $(v_0,t_0)$ to obtain $d^s(f_{t+T}u,f_{t+T}w_0) \leq \alpha^n d^s(u,w_0) \leq \alpha^n \rho < \rho/(8\kappa)$. Since $\dK \leq d^s$, this gives
\[
\dK(f_{t+T}u,w_0) \leq 
d^s(f_{t+T}u, f_{t+T}w_0) + \dK(f_{t+T}w_0, w_0) 
\leq
\rho/(4\kappa),
\]
and thus by the local product structure the map is well-defined.
By continuity of the map, the Brouwer fixed point theorem gives $w_1 \in W_{\rho/4}^s(w_0)$ with $w_1\in W_{\rho/4}^{cu}(f_{t+T}w_1)$, and thus  $w_1\in W_{\rho/4}^u(f_{t+T+r}w_1)$ for some $|r|<\rho\leq \wiggle$.  
Writing $\tau = T+r \in [T-\wiggle,T+\wiggle]$, we observe that 
\begin{equation}\label{eqn:w1-iterated}
W_{\rho/4}^u(w_1) \subset W_{\rho/2}^u(f_{t+\tau} w_1),
\end{equation}
and so given any $u\in W_{\rho/4}^u(w_1)$,  repeated application of \eqref{eqn:per-expands} gives
\[
f_{-(t+\tau)}(u) \in W_{\alpha^n \rho/2}^u(w_1) \subset W^u_{\rho/4}(w_1),
\]
so $f_{-(t+\tau)}$ sends $W_{\rho/4}^u(w_1)$ to itself continuously. The Brouwer fixed point theorem gives  $w = f_{t+\tau} w \in W_{\rho/4}^u(w_1)$.   Since the period of this orbit is in the required range, it only remains to check that $d_t(v,w) < \rho$, which we do by using the triangle inequality, \eqref{eqn:dtdcs}, \eqref{eqn:dKdu}, $\tau > 1$, and \eqref{eqn:w1-iterated} to get
\begin{align*}
d_t(v,w) &\leq d_t(v,w_0) + d_t(w_0,w_1) + d_t(w_1,w) \\
&\leq \frac{\rho}{16\kappa} + d^s(w_0,w_1) + d^u(f_{t+\tau} w_1, f_{t+\tau} w) \\
&\leq \frac{\rho}{16} + \frac{\rho}4 + d^u(f_{t+\tau} w_1, w)
\leq \frac{\rho}{16} + \frac{\rho}4 + \frac{\rho}2 < \rho.\qedhere
\end{align*}
\end{proof}

We remark that combining Theorem \ref{specgeoflow} and Lemma \ref{closing} yields a version of the specification property for $\CCC(\eta)$ where we can approximate a finite sequence of orbit segments by a \emph{periodic} orbit segment. We also have the following corollary for orbit segments $(v,t)$ that enter $\CCC(\eta)$ in both the time intervals $[0, M]$ and $[t-M, t]$. 

\begin{cor} \label{closingGM}
For all $\rho,\eta,\wiggle, M>0$, there exists $T$ so that for every $(v,t)$ with the property that there exists $p, s \in [0, M]$ such that $(f_pv, f_{t-p-s}v) \in \CCC(\eta)$, there exists $w\in B_t(v,\rho)$ and $\tau\in [T-\wiggle,T+\wiggle]$ such that $f_{t+\tau} w = w$ and $w \in \Reg$.

\end{cor}
\begin{proof}
Given $\rho>0$, by continuity of the flow and $\lambda$ and compactness of $T^1M$, there exists $\rho'>0$ such that
$\dK(v, w)< \rho'$ implies that $\dK(f_t v, f_tw)< \rho$ for every $t \in [-M, M]$, and if $d_K(v, w)< \rho'$ and $\lambda(v)> \eta$ then $\lambda (w)> 0$. Let $(v,t)$ and $p, s \leq M$ satisfy $(v', t') =(f_pv, t-p-s) \in \CCC(\eta)$.  Fix $\delta>0$. By Lemma \ref{closing}, we know that there exists $w'$ with $f_{t'+\tau}w'=w'$, where $\tau \in[T(\rho')-\delta, T(\rho')+\delta]$, and $d_{t'}(v', w')< \rho'$. Since $\lambda(v)> \eta$ and $d_K(v', w')< \rho'$, we have $\lambda(w')>0$ and thus $w'$ is a regular periodic point. 
\end{proof}

\section{Pressure estimates} \label{s.pressure}
In this section, we prove that the hypotheses of Theorem \ref{t.multiples} guarantee the pressure gap conditions $P^{\perp}_{\mathrm{exp}}(\varphi)<P(\varphi)$ and $P([\PPP] \cup [\SSS],\ph)<P(\ph)$ in Theorem \ref{t.abstract}.
Here, $\PPP=\SSS = \BBB(\eta)$ where $\BBB(\eta)$ is the collection introduced in \S \ref{s.decomp} and $\eta>0$ is sufficiently small.

\subsection{General estimates} 

We start with a general result for a continuous flow $\FFF$ on a compact metric space $X$,  which relates pressure for a collection of orbit segments to the free energies for an associated collection of measures. 
Given a collection $\CCC$ of orbit segments, let $\MMM(\CCC)$ denote the set of $\FFF$-invariant measures on $X$ that are obtained as limits of convex combinations of empirical measures along orbit segments in $\CCC$.  That is, for each $(x,t)\in \CCC$ define the empirical measure $\mathcal{E}_{x,t}$ by
\[
\int \psi\,d\mathcal{E}_{x,t} = \frac 1t\int_0^t \psi(f_s x)\,ds,
\]
for all $\psi\in C(X)$.
Consider for each $t\geq 0$ the convex hull
\[
\MMM_t(\CCC) = \Bigg\{ \sum_{i=1}^k a_i \mathcal{E}_{x_i, t} : a_i \geq 0,\ \sum a_i = 1,\ (x_i,t) \in \CCC \Bigg\}.
\]
We use the following set of $\FFF$-invariant Borel probability measures:
\begin{equation}\label{eqn:MC}
\MMM(\CCC) 
= \Big\{\lim_{k\to\infty} \mu_{t_k} : t_k\to\infty, \mu_{t_k}\in \MMM_{t_k}(\CCC) \Big\}.
\end{equation}
Note that $\MMM(\CCC)$ is non-empty as long as $\CCC$ contains arbitrarily long orbit segments (which happens whenever $P(\CCC,\ph) > - \infty$).

\begin{prop} \label{p.pressureestimate}
If $\varphi$ is a continuous potential, then
\[
P(\mathcal{C}, \varphi)\leq \sup_{\mu\in \mathcal{M}(\mathcal{C})} P_\mu(\varphi),
\]
where
we write $P_\mu(\ph) = h_\mu(\FFF) + \int\ph\,d\mu$
for convenience.
\end{prop}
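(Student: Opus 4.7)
This is a variational principle restricted to orbit segments in $\CCC$, and the plan is to adapt Misiurewicz's classical argument to the restricted setting. It suffices to show that for every $\delta > 0$ there exists $\mu_\delta \in \MMM(\CCC)$ with
\[
P_{\mu_\delta}(\varphi) \geq \limsup_{t \to \infty} \frac{1}{t} \log \Lambda(\CCC, \varphi, \delta, t),
\]
since the right-hand side is monotone in $\delta$ with limit $P(\CCC, \varphi)$ as $\delta \to 0$; taking the supremum over $\mu \in \MMM(\CCC)$ and then letting $\delta \to 0$ yields the proposition.

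To produce $\mu_\delta$, fix $\delta > 0$, pick a sequence $t_k \to \infty$ along which the $\limsup$ is realized, and for each $k$ pick a $(t_k, \delta)$-separated set $E_k \subset \CCC_{t_k}$ whose partition function $Z_k := \sum_{x \in E_k} e^{\Phi(x, t_k)}$ is at least $\tfrac{1}{2} \Lambda(\CCC, \varphi, \delta, t_k)$. Form the weighted atomic probability measure $\nu_k = Z_k^{-1} \sum_{x \in E_k} e^{\Phi(x, t_k)} \delta_x$ and average along the flow to obtain
\[
\mu_k := \frac{1}{t_k}\int_0^{t_k} (f_s)_* \nu_k \, ds = \sum_{x \in E_k} \frac{e^{\Phi(x, t_k)}}{Z_k} \cdot \frac{1}{t_k}\mathcal{E}_{x, t_k}.
\]
By construction $\mu_k$ is a (suitably normalized) convex combination of empirical measures along orbit segments $(x, t_k) \in \CCC$, so $\mu_k \in \MMM_{t_k}(\CCC)$; any weak$^*$ accumulation point $\mu$ of the $\mu_k$ lies in $\MMM(\CCC)$ by definition, and is $\FFF$-invariant by the standard averaging argument.

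To bound $P_\mu(\varphi)$ from below, apply Misiurewicz's partition argument. Choose a finite measurable partition $\xi$ of $X$ with $\mu(\partial \xi) = 0$ whose diameter is small enough (together with a joint refinement by finitely many flow-translates $f_s \xi$, $s \in F \subset [0,1]$, if necessary) that, relative to the time-$1$ map $f_1$, distinct points of $E_k$ lie in distinct atoms of $\xi^{\lfloor t_k \rfloor} := \bigvee_{i=0}^{\lfloor t_k \rfloor - 1} f_{-i}\xi$. The definition of $\nu_k$ yields the identity
\[
H_{\nu_k}(\xi^{\lfloor t_k \rfloor}) + \int \Phi(\cdot, t_k) \, d\nu_k = \log Z_k.
\]
The classical convexity manipulation (slicing $[0, t_k]$ into blocks and applying concavity of $H$) converts this into a bound of the form
\[
\frac{1}{t_k} H_{\mu_k}(\xi^n) + \int \varphi \, d\mu_k \geq \frac{1}{t_k} \log Z_k - o(1)
\]
for suitable $n$. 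Passing $k \to \infty$ using $\mu(\partial \xi) = 0$ for weak$^*$-continuity of the partition entropy term and weak$^*$ convergence for the $\varphi$-integral gives $h_\mu(\FFF, \xi) + \int \varphi \, d\mu \geq \limsup_k \frac{1}{t_k}\log Z_k$; refining $\xi$ upgrades this to $P_\mu(\varphi) \geq \limsup_t \frac{1}{t}\log \Lambda(\CCC, \varphi, \delta, t)$.

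The main obstacle is the familiar continuous-time wrinkle in Misiurewicz's argument: $(t, \delta)$-separation is a flow condition, while partition refinements are governed by the time-$1$ map. Bridging this requires choosing $\xi$ so that atoms of $\xi^{\lfloor t_k \rfloor}$ are genuinely smaller than $\delta$-Bowen balls in the flow metric $\dK$, which in turn uses compactness of $T^1M$ and uniform continuity of $\FFF$ on $[0,1]$. Once this is arranged uniformly in $k$, the remainder is a standard computation.
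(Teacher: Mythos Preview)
Your proposal is correct and follows essentially the same route as the paper: construct the weighted empirical measures $\mu_k = Z_k^{-1}\sum_{x\in E_k} e^{\Phi(x,t_k)}\,\tfrac{1}{t_k}\mathcal{E}_{x,t_k}\in\MMM_{t_k}(\CCC)$, pass to a weak$^*$ limit $\mu\in\MMM(\CCC)$, and invoke the second half of the variational principle to get $P_\mu(\ph)\geq P(\CCC,\ph,\delta)$. The only difference is packaging: the paper simply cites \cite[Theorem~9.10]{Wa} for the Misiurewicz estimate, whereas you sketch the partition argument and explicitly flag the continuous-time wrinkle---helpful detail, but not a different method.
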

\begin{proof}   
For an arbitrary fixed $\epsilon>0$, and any $t>0$,  let $E_t$ be a $(t,\epsilon)$-separated set for $\CCC_t$ of maximal cardinality with 
\[
\log \sum_{y\in E_t} e^{\Phi(y, t)}> \log \Lambda(\mathcal{C}, \varphi, \epsilon, t)-1.
\]
Then there is $t_k\to\infty$ such that
\begin{equation}\label{eqn:Etk}
\lim_{k\to\infty} \frac 1{t_k} \log \sum_{y\in E_{t_k}} e^{\Phi(y,t_k)}
\geq 
P(\CCC,\ph,\eps).
\end{equation}
Consider the measures
\[
\mu_t =
\frac{\sum_{y\in E_t} e^{\Phi(y, t)}\mathcal{E}_{y,t}}{\sum_{y\in E_t}e^{\Phi(y, t)}}.
\]
By construction, $\mu_t \in \MMM_t(\CCC)$.
Passing to a subsequence if necessary, we can assume that $\mu_{t_k} \to \mu\in \MMM(\CCC)$.
The second half of the proof of the variational principle \cite[Theorem 9.10]{Wa} shows that 
\[
h_\mu + \int \varphi \,d \mu \geq \liminf_{k \to \infty}\frac{1}{t_k} \log \sum_{y\in E_{t_k}} e^{\Phi(y,t_k)},
\] 
so \eqref{eqn:Etk} gives $P_\mu(\ph) \geq P(\CCC,\ph,\eps)$.
Taking $\eps>0$ arbitrarily small gives the required result.
\end{proof}

In general, the inequality in Proposition \ref{p.pressureestimate} may be strict. For example, let $\nu$ be an ergodic measure with positive entropy, $x$ be a generic point for $\nu$ in the sense that $\lim_{t\to\infty} \mathcal{E}_{x,t} = \nu$, and $\mathcal{C} = \{(x,t) : t>0\}$.  We have $P(\CCC,\ph) = \int\ph\,d\nu < h_\mu + \int\ph\,d\nu = P_\nu(\ph)= \sup_{\mu\in \MMM(\CCC)} P_\mu(\ph)$.

\subsection{Pressure estimates for bad orbits}\label{sec:pressure-estimates-for-bad-orbits}

Now we consider the geodesic flow and  estimate the pressure of the `bad' orbit segments.

\begin{prop}\label{p.pressure}
With $\BBB(\eta)$ as in \S\ref{s.decomp} and $\ph\colon T^1M\to\RR$ continuous, we have
$\lim_{\eta\to 0} P([\BBB(\eta)],\ph) = P(\Sing,\ph)$.  In particular, if $P(\mathrm{Sing}, \varphi)< P(\varphi)$, then there exists some $\eta>0$ such that $P([\BBB(\eta)], \varphi)<P(\varphi)$.
\end{prop}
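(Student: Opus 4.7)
The goal is to sandwich $\lim_{\eta\to 0}P([\BBB(\eta)],\ph)$ between $P(\Sing,\ph)$ and itself, after which the last sentence of the proposition is immediate from $P(\Sing,\ph)<P(\ph)$.

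The easy direction is the lower bound. Since $\lambda\equiv 0$ on $\Sing$ (Corollary \ref{lem.lambda}), every orbit segment $(v,t)$ with $v\in\Sing$ satisfies $\int_0^t\lambda(f_sv)\,ds=0<\eta t$, so $\Sing\times[0,\infty)\subset\BBB(\eta)\subset[\BBB(\eta)]$ for every $\eta>0$. Monotonicity of the partition function in the collection then gives $P(\Sing,\ph)\leq P([\BBB(\eta)],\ph)$, hence $\liminf_{\eta\to 0}P([\BBB(\eta)],\ph)\geq P(\Sing,\ph)$.

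For the upper bound the plan is to push measures that nearly attain the pressure of $[\BBB(\eta)]$ onto $\Sing$ as $\eta\to 0$. Apply Proposition \ref{p.pressureestimate} to obtain, for each $\eta>0$, a measure $\mu_\eta\in\MMM([\BBB(\eta)])$ with $P_{\mu_\eta}(\ph)\geq P([\BBB(\eta)],\ph)-\eta$. The key estimate is that $\int\lambda\,d\mu_\eta\leq\eta$. To see this, take any $(x,n)\in[\BBB(\eta)]$: by definition there exist $s,t\in[0,1]$ with $(f_{-s}x,n+s+t)\in\BBB(\eta)$, so
\[
\int_0^n\lambda(f_ux)\,du\leq\int_0^{n+s+t}\lambda(f_u f_{-s}x)\,du<\eta(n+s+t)\leq \eta(n+2).
\]
Hence the (normalized) empirical measure satisfies $\int\lambda\,d\mathcal{E}_{x,n}\leq\eta+2\eta/n$, and the same bound (with the error shrinking to $0$) passes to convex combinations and then to the weak$^\ast$ limits defining $\MMM([\BBB(\eta)])$. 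Since $\lambda$ is continuous, $\int\lambda\,d\mu_\eta\leq\eta$.

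Now extract a weak$^\ast$ subsequential limit $\mu_\eta\to\mu$ as $\eta\to 0$. Continuity of $\lambda$ gives $\int\lambda\,d\mu=0$, so by Corollary \ref{c.singular} we have $\supp\mu\subset\Sing$ and in particular $\mu\in\MMM(\FFF|_\Sing)$. Because the geodesic flow is $C^\infty$, the entropy map is upper semi-continuous, so $P_\mu(\ph)\geq\limsup_{\eta\to 0}P_{\mu_\eta}(\ph)\geq\limsup_{\eta\to 0}P([\BBB(\eta)],\ph)$. The variational principle applied to $(\Sing,\FFF|_\Sing,\ph|_\Sing)$ gives $P_\mu(\ph)\leq P(\Sing,\ph)$, completing the upper bound. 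The ``in particular'' statement is then immediate: if $P(\Sing,\ph)<P(\ph)$, choose $\eta$ small enough that $P([\BBB(\eta)],\ph)<P(\ph)$.

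The main obstacle is the careful bookkeeping in the estimate $\int\lambda\,d\mu_\eta\leq\eta$, where one must absorb the boundary contribution introduced by the thickening operation $[\cdot]$ (the shift by $s\in[0,1]$ and the extension by $t\in[0,1]$) into a vanishing error as the orbit length $n$ tends to infinity. Everything else is a standard application of upper semi-continuity of entropy together with the criterion for measures to be supported on $\Sing$ provided by Corollary \ref{c.singular}.
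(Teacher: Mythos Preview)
Your proof is correct and follows essentially the same approach as the paper's: both use Proposition~\ref{p.pressureestimate} to bound $P([\BBB(\eta)],\ph)$ by free energies of measures with $\int\lambda\,d\mu\leq\eta$, then invoke Corollary~\ref{c.singular} and upper semi-continuity of entropy. The only cosmetic differences are that the paper packages the argument via the compact sets $\MMM_\lambda(\eta)=\{\mu:\int\lambda\,d\mu\leq\eta\}$ and their convergence to $\MMM(\Sing)$, rather than extracting a single near-optimal sequence $\mu_\eta$, and that the paper cites $h$-expansivity (rather than $C^\infty$ smoothness) for upper semi-continuity of the entropy map.
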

\begin{proof}
Since the function $\lambda$ vanishes on $\Sing$, we have $\Sing\times \mathbb{N} \subset [\BBB(\eta)]$ for all $\eta>0$, which immediately gives $P(\Sing,\ph) \leq P([\BBB(\eta)],\ph)$. 
Thus it suffices to show that for every $\eps>0$ we have $P([\BBB(\eta)],\ph) < P(\Sing,\ph)+\eps$ whenever $\eta>0$ is sufficiently small.

To this end, consider for each $\eta>0$ the set of measures $\MMM_\lambda(\eta) = \{ \mu \in \MMM(T^1M) : \int\lambda\,d\mu \leq \eta\}$.  Given $(v,t)\in [\BBB(\eta)]$, we have
\[
\int_0^t \lambda(f_s v)\,ds \leq t\eta + 2\|\lambda\|,
\]
where the last term comes from the fact that we are considering $[\BBB(\eta)]$ instead of $\BBB(\eta)$.  By convexity, we have
$
\int\lambda \,d\mu_t \leq \eta + \frac 2t \|\lambda\|
$
for every $\mu_t \in \MMM_t([\BBB(\eta)])$, and thus every $\mu\in \MMM([\BBB(\eta)])$ satisfies $\int\lambda\,d\mu \leq \eta$, proving the inclusion $\MMM([\BBB(\eta)]) \subset \MMM_\lambda(\eta)$.  By Proposition \ref{p.pressureestimate} we have
\[
P([\BBB(\eta)],\ph) \leq \sup_{\mu\in \MMM([\BBB(\eta)])} P_\mu(\ph)
\leq \sup_{\mu\in\MMM_\lambda(\eta)} P_\mu(\ph),
\]
and so it suffices to show that for every $\eps>0$ this last quantity can be made smaller than $P(\Sing,\ph)+\eps$ by taking $\eta>0$ sufficiently small.

Note that $\MMM_\lambda(\eta)$ is compact in the weak* topology by continuity of $\lambda$.  Moreover, $\MMM(\Sing) \subset \MMM_\lambda(\eta)$ for all $\eta>0$, and by Lemma~\ref{c.singular}, we see that every $\mu$ with $\int\lambda\,d\mu=0$ is supported on $\Sing$, whence we conclude that
\begin{equation}\label{eqn:Msing}
\MMM(\Sing) = \bigcap_{\eta>0} \MMM_\lambda(\eta).
\end{equation}
Let $D$ be a metric on $\MMM(T^1 M)$ compatible with the weak* topology.  Since $\MMM_\lambda(\eta)$ is compact for each $\eta>0$, \eqref{eqn:Msing} gives
\[
D(\MMM_\lambda(\eta), \MMM(\Sing)) \to 0 \text{ as } \eta\to0.
\]
By \cite[Proposition 3.3]{knieper98}, $f_t$ is  $h$-expansive, so the entropy function $\mu \mapsto h(\mu)$ is upper semi-continuous, as is $\mu \mapsto P_\mu(\varphi)$. Thus, for any $\epsilon >0$, there exists $\gamma >0$ so that $D(\mu, \nu) < \gamma$ implies $P_{\mu}(\varphi) < P_{\nu}(\varphi) + \epsilon$. Choosing $\eta$ small enough so that 
$
D(\MMM_\lambda(\eta), \MMM(\mathrm{Sing})) < \gamma,
$
we obtain
\[
\sup_{\mu\in \MMM_\lambda(\eta)} P_\mu(\varphi) \leq \sup_{\mu\in \MMM(\mathrm{Sing})} P_\mu(\varphi)+ \epsilon
=P(\Sing,\ph)+\eps.
\]
Since $\eps>0$ was arbitrary, this completes the proof.
\end{proof}

\subsection{Pressure of obstructions to expansivity}\label{sec:Pexp}
We use the following lemma to prove that  $P^{\perp}_{\mathrm{exp}}(\varphi)\leq P(\sing,\ph)$. Let $\inj(M)$ be the injectivity radius of $M$.  

\begin{lem}\label{lem:ne-sing}
If $0 < \eps < \inj(M)/3$, then $\mathrm{NE}(\eps) \subset \Sing$.
\end{lem}
\begin{proof}
Given $v\in T^1M$, let $\tilde\gamma_v$ be the lift of $\gamma_v$ to the universal cover.  If $w\in \Gamma_\eps(v)$, then the choice of $\epsilon$ guarantees that $d(\tilde\gamma_v(t),\tilde\gamma_w(t)) \leq \eps$ for all $t\in \RR$.  If $\gamma_v$ and $\gamma_w$ are distinct geodesics, then
%
by the flat strip theorem \cite[Proposition 1.11.4]{Ebe:96}, they bound a flat strip in the universal cover. Thus, $v$ has a parallel Jacobi field, and hence $v\in \Sing$. Thus, $\mathrm{NE}(\eps) \subset \Sing$.  In particular, $\mu(\Sing)=1$.
\end{proof}


\begin{prop} \label{p.exp}
For a continuous potential $\ph$, $P^{\perp}_{\mathrm{exp}}(\ph) \leq P(\sing,\ph)$.
\end{prop}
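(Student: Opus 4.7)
The plan is to combine the preceding lemma with the variational principle applied to the restricted flow on $\Sing$. Fix $\eps > 0$ and let $\mu \in \MMM^e(\FFF)$ be any ergodic measure with $\mu(\mathrm{NE}(\eps)) = 1$. By the preceding lemma, $\mu$ is supported on $\Sing$, so $\mu \in \MMM(\Sing) \subset \MMM(\FFF|_{\Sing})$. Noting that $h_\mu(f_1) = h_\mu(\FFF)$ by the usual identification of flow entropy with time-one-map entropy, the variational principle for pressure applied to the continuous flow $\FFF|_\Sing$ on the compact invariant set $\Sing$ yields
\[
h_\mu(f_1) + \int \ph\, d\mu = h_\mu(\FFF) + \int \ph\, d\mu \leq P(\Sing, \ph).
\]

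Taking the supremum over all such ergodic $\mu$ gives $P^\perp_{\mathrm{exp}}(\ph, \eps) \leq P(\Sing, \ph)$, and since this bound is uniform in $\eps$, sending $\eps \to 0$ yields $P^\perp_{\mathrm{exp}}(\ph) \leq P(\Sing, \ph)$. There is no real obstacle here: the substantive content has already been packaged into the preceding lemma (which uses the flat strip theorem to force non-expansive points into $\Sing$), so the remaining step is just a one-line appeal to the variational principle.
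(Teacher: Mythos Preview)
Your proof is correct and follows essentially the same approach as the paper: both invoke the preceding lemma to conclude that any ergodic measure with $\mu(\mathrm{NE}(\eps))=1$ lies in $\MMM(\Sing)$, then bound its free energy by $P(\Sing,\ph)$ via the variational principle. Your version spells out the passage $\eps\to 0$ and the identification $h_\mu(f_1)=h_\mu(\FFF)$ a bit more explicitly, but the argument is the same.
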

\begin{proof}
By Lemma \ref{lem:ne-sing} and the Variational Principle, for any $\eps>0$,
\begin{align*}
P^\perp_{\mathrm{exp}}(\varphi, \epsilon)& =\sup_{\mu\in \mathcal{M}^e(\mathcal{F})}\left\{
h_\mu(f_1) + \int\varphi\, d\mu\, :\, \mu(\mathrm{NE}(\eps))=1\right\} \\
& \leq \sup_{\mu\in \mathcal{M} (\Sing)}\left\{
h_\mu(f_1) + \int\varphi\, d\mu\ \right \} = P(\Sing, \ph). \qedhere
\end{align*} 
\end{proof}

\section{Completing the proof of Theorem \ref{t.geodgeneral}} \label{s.geodgeneral}

Let $(\PPP, \GGG, \SSS) = (\BBB(\eta), \GGG(\eta), \BBB(\eta))$ be the decomposition described in \S \ref{s.decomp}. By Theorem~\ref{specgeoflow}, $\GGG(\eta)$ has specification for all $\eta>0$.  By Proposition~\ref{p.pressure}, 
for sufficiently small $\eta$ we have
$P([\PPP]\cup[\SSS], \varphi) = P([\BBB(\eta)], \varphi) < P(\varphi)$. By Proposition~\ref{p.exp}, $P^{\perp}_{\mathrm{exp}}(\varphi)\leq P(\sing,\ph)$. This verifies the hypotheses of Theorem~\ref{t.abstract}, and thus we conclude that $\varphi$ has a unique equilibrium state $\mu$.  

We now prove the remaining properties of $\mu$ stated in Theorem \ref{t.geodgeneral}.  Since $\mu$ is a unique equilibrium state, it must be ergodic (see \cite[Theorem 9.13]{Wa} for details), and thus either $\mu(\Sing)=0$ or $\mu(\Sing)=1$. Suppose the second case holds. Then by the variational principle, it would follow that $P(\Sing, \ph) \geq h_{\mu}(\FFF) + \int \ph\, d\mu=P(\ph)$. This contradicts the hypothesis of the theorem, and thus $\mu(\Reg)=1$. By Corollary \ref{c.hyperbolic}, it follows that $\mu$ is hyperbolic.

To prove $\mu$ is fully supported, and that weighted regular periodic orbits equidistribute to $\mu$, we recall details from \cite{CT4}. Given a decomposition $(\PPP, \GGG, \SSS)$ and $M>0$, we write $\GGG^M$ for the set of orbit segments $(x, t)$ whose decomposition satisfies $p(x,t), s(x,t) \leq M$. When the hypotheses of Theorem \ref{t.abstract} are satisfied, $\GGG^M$ has the following properties.

\begin{lem} \label{lem:ctlems}
There exists $M,C,\eps>0$ such that for all $t>0$,
\begin{equation} \label{lambdagm}
\Lambda(\GGG^M,\eps,t) > C e^{tP(\ph)} .
\end{equation}
Thus for sufficiently large $M$,  we have $P(\GGG^M, \varphi) = P(\varphi)$. We have the lower Gibbs property on $\GGG^M$: for all $\rho>0$, there exists $Q , T, M>0$ such that for every $(v, t) \in \GGG^M$ with $t \geq T$,
\[
\mu(B_t(v,\rho))\geq Q e^{-tP(\ph) + \Phi(v,t)}.
\]
As a consequence of the lower Gibbs property, if $(v, t) \in \GGG$ and $t$ is sufficiently large, then $\mu(B(v,\rho))>0$.
\end{lem}
\begin{proof}
Lemma 4.12 of \cite{CT4} shows that there exists $M,C,\eps>0$ such that $\Lambda(\GGG^M,\eps,t) > Q e^{tP(\ph)}$ for all $t>0$. For sufficiently large $M$,  it follows that  $P(\GGG^M, \varphi) = P(\varphi)$. The lower Gibbs property for $\GGG^M$ is provided by \cite[Lemma 4.16]{CT4}. That lemma has a hypothesis that $\rho> 2 \delta$, where $\delta$ is a scale at which $\GGG$ is required to have specification, and at which the pressure gap holds, see \cite[Remark 4.13]{CT4}; both of these conditions hold here for arbitrarily small $\delta$, so \cite[Lemma 4.16]{CT4} applies for all $\rho>0$.
Finally, note that $\GGG \subset \GGG^M$ for all $M$. Thus, if $(v,t)\in \GGG$ and $t\geq T$, then 
$\mu(B(v, \rho)) \geq \mu(B_t(v,\rho))\geq Q e^{-tP(\ph) + \Phi(v,t)}>0$.
\end{proof}

We also need the following consequence of Theorem \ref{specgeoflow}.

\begin{lem}\label{cor:long-good}
Given $\eta,\rho>0$, there exists $\eta_0>0$ such that for every $v\in \Reg(\eta)$ and every $T>0$, there are $t\geq T$ and $w\in B(v,\rho)$ such that $(w,t)\in \GGG(\eta_0)$.
\end{lem}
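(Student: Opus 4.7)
The idea is to construct $w$ whose orbit first shadows $v$ briefly, then---after a bounded transition---follows a fixed closed geodesic $\gamma_0\subset\Reg$ for a long time $S\geq T$. Along $\gamma_0$ the function $\lambda$ has a uniform positive lower bound $\eta_0'$, which dominates the time-integrals once $S$ is large enough relative to the transition length.

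Fix once and for all a closed geodesic $\gamma_0\subset\Reg$ (which exists in the rank $1$ setting; see e.g.\ \cite{wB95}), and let $\eta_0':=\min_{u\in\gamma_0}\lambda(u)>0$. By uniform continuity of $\lambda$, pick $\delta_0>0$ so that $\dK(u,u')<\delta_0$ implies $|\lambda(u)-\lambda(u')|<\min(\eta,\eta_0')/4$, and set $\rho_0:=\min(\rho/2,\delta_0)$. Compactness of $\Reg(\eta)$ plus uniform continuity yields a single $t_v>0$ such that $\lambda(f_s v)\geq\eta/2$ for all $s\in[0,t_v]$ and all $v\in\Reg(\eta)$. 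Setting $\eta^*:=\min(\eta_0',\eta/2)$, one checks that $(v,t_v),(v_0,S)\in\CCC(\eta^*)$ for every $v\in\Reg(\eta)$, $v_0\in\gamma_0$, and $S>0$. Let $\tau$ be the transition time for specification of $\CCC(\eta^*)$ at scale $\rho_0$ provided by Theorem \ref{specgeoflow}.

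Given $T>0$ and $v\in\Reg(\eta)$, fix any $v_0\in\gamma_0$ and set $S:=\max(T,\,t_v+\tau)$. Specification applied to $(v,t_v),(v_0,S)\in\CCC(\eta^*)$ yields $w\in T^1M$ and $\tau_1\in[0,\tau]$ with
\[
w\in B_{t_v}(v,\rho_0)\subset B(v,\rho)\qquad\text{and}\qquad f_{t_v+\tau_1}(w)\in B_S(v_0,\rho_0).
\]
Set $t:=t_v+\tau_1+S\geq T$. Shadowing at scale $\delta_0$ and the choice of $\delta_0$ give $\lambda(f_s w)\geq\eta/4$ for $s\in[0,t_v]$ and $\lambda(f_s w)\geq 3\eta_0'/4$ for $s\in[t_v+\tau_1,\,t]$, while on the transition interval $[t_v,t_v+\tau_1]$ we use only $\lambda\geq 0$. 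Splitting $\tfrac{1}{T'}\int_0^{T'}\lambda(f_s w)\,ds$ into at most these three ranges and noting that the M\"obius function $r\mapsto(a+br)/(c+r)$ attains its infimum on $[0,\infty)$ at an endpoint, one computes that for every $T'\in[0,t]$ this ratio is at least
\[
\eta_0:=\min\!\Bigl(\tfrac{\eta\,t_v}{4(t_v+\tau)},\,\tfrac{3\eta_0'}{8}\Bigr),
\]
which depends only on $\eta$ and $\rho$. The backward averages at $f_t w$ are handled by the symmetric computation; here the choice $S\geq t_v+\tau$ ensures $S/(S+\tau)\geq 1/2$ and $S/(S+t_v+\tau)\geq 1/2$, yielding the same bound $\eta_0$.

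The main obstacle is the transition window of length $\tau_1\leq\tau$, during which $\lambda$ is completely uncontrolled, while membership in $\GGG(\eta_0)$ demands a lower bound on the average over \emph{every} initial and terminal length $T'\in[0,t]$---including values just past the transition. This is what forces both the uniform choice of $t_v$ and the requirement $S\geq t_v+\tau$: the short bad piece must be balanced by a comparably long good piece on both sides, regardless of where in $\Reg(\eta)$ the starting point $v$ sits.
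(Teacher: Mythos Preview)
Your argument is correct and follows the same overall strategy as the paper's proof: glue a short initial segment $(v,t_v)$ to a long ``good'' piece via specification, then verify the $\GGG(\eta_0)$ condition by estimating the running averages of $\lambda$ piecewise. The paper differs in its choice of good piece: rather than a single long segment along a closed regular geodesic, it fixes one orbit segment $(u,t_0)\in\GGG(\eta/2)$ and concatenates $k$ copies of it (with $kt_0\geq T$), invoking Lemma~\ref{lem:Geta} to pass the $\GGG(\eta/2)$ property to each shadowed copy. Your approach has only one transition window, which makes the average estimates cleaner and allows you to write down $\eta_0$ explicitly; the paper leaves this as ``not hard to show.'' The cost is that you appeal to the existence of a closed regular geodesic, whereas the paper only needs a single finite segment in $\GGG(\eta/2)$, a slightly more elementary input. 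Either route is perfectly adequate here.
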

\begin{proof}
By Lemma \ref{lem:Geta}, we can decrease $\rho$ if necessary and assume that  if $(u,t)\in \GGG(\eta/2)$ and $u'\in B_t(u,\rho)$, then $u'\in \GGG(\eta/4)$.  Let $\tau$ be the transition time for the specification property for $\GGG(\eta/2)$ at scale $\rho$.
Let $v\in \Reg(\eta)$.  Then using the modulus of continuity for $\lambda$, we can find a fixed $\epsilon>0$ (independent of $v$) so that $(v,\epsilon) \in \GGG(\eta/2)$.  Fix $(u,t_0)\in \GGG(\eta/2)$, and let $k\in \mathbb{N}$ be such that $kt_0 \geq T$.  By the specification property, we can find a point $w$ that shadows $(v,\epsilon)$, and then shadows $k$ copies of $(u,t_0)$.  Then for each $j\geq 1$, $(f_{s_j+\tau_j} w, t_0)\in \GGG(\eta/4)$. Using this fact, and the definition of $\GGG$,  it is not hard to show the existence of a constant $\eta_0$ so that $(w,t)\in \GGG(\eta_0)$ where $t = kt_0 + \epsilon + \sum_{j=1}^{k-1}\tau_j$, and $\eta_0$ depends only on $\rho,\eta,\tau$.
\end{proof}

We are now ready to prove the following.
\begin{prop}\label{prop:support}
The unique equilibrium state $\mu$ provided by Theorem \ref{t.geodgeneral} is fully supported.
\end{prop}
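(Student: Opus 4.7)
The plan is to show $\mu(U)>0$ for every non-empty open set $U\subset T^1M$, using the lower Gibbs bound from Lemma \ref{lem:ctlems} at a point $w$ sitting deep inside $U$ that admits a long good orbit segment. The tools are all in place: density of uniformly regular points, the shadowing-based Lemma \ref{cor:long-good} producing long $\GGG(\eta_0)$-segments near any regular point, and the lower Gibbs property on $\GGG$ for sufficiently long times.

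First, fix $v_0\in U$ and $\rho>0$ small enough that $B(v_0,3\rho)\subset U$. By Lemma \ref{lem:dense}, $\bigcup_{\eta>0}\Reg(\eta)$ is dense in $T^1M$, so we can choose $\eta>0$ and $v\in\Reg(\eta)\cap B(v_0,\rho)$. Apply Lemma \ref{cor:long-good} at this $v$ and scale $\rho$ to obtain a constant $\eta_0>0$ (depending on $\eta,\rho$) with the property that for every $T>0$ there is $t\geq T$ and $w\in B(v,\rho)$ with $(w,t)\in\GGG(\eta_0)$. By the triangle inequality, $w\in B(v_0,2\rho)$, and hence $B(w,\rho)\subset B(v_0,3\rho)\subset U$.

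Now invoke Lemma \ref{lem:ctlems}: there exists $T_0=T_0(\rho)$ such that whenever $(w,t)\in\GGG(\eta_0)$ with $t\geq T_0$, the lower Gibbs bound gives $\mu(B(w,\rho))\geq Qe^{-tP(\ph)+\Phi(w,t)}>0$. Choosing $T\geq T_0$ in the previous paragraph and applying this estimate yields $\mu(U)\geq\mu(B(w,\rho))>0$.

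Since $U$ was arbitrary, $\supp\mu=T^1M$. The only substantive obstacle is verifying that we can simultaneously place $w$ inside $U$ and arrange $t\geq T_0$ while remaining in $\GGG(\eta_0)$; this is precisely what Lemma \ref{cor:long-good} delivers once we have density of $\Reg(\eta)$ and the specification property of Theorem \ref{specgeoflow} at our disposal, so no genuinely new work is required.
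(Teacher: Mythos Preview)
Your proof is correct and follows essentially the same approach as the paper: use density of $\{\lambda>0\}$ (Lemma \ref{lem:dense}), then Lemma \ref{cor:long-good} to produce arbitrarily long $\GGG(\eta_0)$-segments near a given regular point, and finally the lower Gibbs bound from Lemma \ref{lem:ctlems} to get positive $\mu$-measure. The paper phrases it as showing $\mu(B(v,2\rho))>0$ for $v$ in a dense set, while you show $\mu(U)>0$ directly for arbitrary open $U$; these are equivalent.

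One small point worth making explicit: Lemma \ref{lem:ctlems} applies to a decomposition satisfying the hypotheses of Theorem \ref{t.abstract}, and the $\eta_0$ produced by Lemma \ref{cor:long-good} need not coincide with the $\eta$ originally chosen in the proof of Theorem \ref{t.geodgeneral}. The paper handles this by observing that the decomposition $(\BBB(\eta_0),\GGG(\eta_0),\BBB(\eta_0))$ itself satisfies those hypotheses (specification and the Bowen property hold for every $\eta_0>0$, and the pressure gap $P([\BBB(\eta_0)],\ph)<P(\ph)$ holds after possibly shrinking $\eta_0$, which is harmless since $\GGG(\eta_0)\subset\GGG(\eta')$ for $\eta'<\eta_0$). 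You implicitly rely on this but do not state it; adding one sentence to that effect would make the argument airtight.
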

\begin{proof}
Let $\Reg' = \{v : \lambda(v)>0\}$.  We show that $\mu(B(v,2\rho))>0$ for every $v\in \Reg'$ and $\rho>0$.
By Lemma \ref{lem:dense}, $\Reg'$ is dense, and by Lemma \ref{cor:long-good}, for every $v\in \Reg'$ and $\rho>0$ there exists $\eta_0>0$ such that for every $T>0$, there are $t\geq T$ and $w\in B(v,\rho)$ such that $(w,t)\in \GGG(\eta_0)$.  The decomposition $(\BBB(\eta_0),\GGG(\eta_0),\BBB(\eta_0))$ satisfies the conditions of Theorem \ref{t.abstract}, and so Lemma \ref{lem:ctlems} applies.  We are free to assume that $(w,t)$ is chosen with $t$ as large as we like,  so  Lemma \ref{lem:ctlems} shows that $\mu(B(v,2\rho)) \geq \mu(B(w,\rho))>0$.  
\end{proof}

We now address growth rates and equidistribution for regular closed geodesics.

\begin{prop} \label{prop:perasymp}
For all $\delta>0$, there exists $\beta>0$ so that the regular closed geodesics satisfy
\begin{equation}\label{eqn:weighted-sums}
\frac{\beta}T e^{TP(\ph)} \leq \Lambda^\ast_\Reg (\ph, T,\delta) \leq \beta^{-1} e^{TP(\ph)}
\end{equation}
for all sufficiently large $T$, where  $\Lambda^\ast_\Reg (\ph, T,\delta)$ is defined in \eqref{eqn:CTdelta}.
\end{prop}
\begin{proof}
The upper bound follows from \cite[Lemma 4.11]{CT4} and \eqref{eqn:LCL}.  To prove the lower bound, we first note that by \eqref{lambdagm}, there exists $C, M$ such that for every $t>0$ there exists a $(t,\eps)$-separated set $E_t \subset (\GGG^M)_t$ satisfying
$\sum_{v\in E_t} e^{\int_0^t \ph(f_s v)\,ds} \geq C e^{tP(\ph)}$.
Since $\ph$ has the Bowen property with respect to $\GGG$, then there exists $\rho, K>0$ such that
\begin{equation}\label{eqn:GM}
\sum_{v\in E_t} \inf_{w\in B_t(v,\rho)} e^{\int_0^t \ph(f_s w)\,ds} \geq C e^{-K} e^{tP(\ph)}.
\end{equation}
Without loss of generality, we assume $\rho < \eps/3$. We approximate each $v\in E_t$ by a regular closed geodesic using Corollary \ref{closingGM}.  That is, there exists a $T_0$ so that we can define a map $v\mapsto w$ so that
\begin{equation}\label{eqn:w-from-v}
f_{t+\tau} w =w \text{ for some } \tau \in (T_0-\delta,T_0], \quad \text{and}\quad
d_t(w,v) < \rho.
\end{equation}
Since $\rho < \eps/3$ and the set $E_t$ is $(t,\eps)$-separated, the map $v\mapsto w$ is injective on $E_t$, and its image $E'_t$ is $(t,\rho)$-separated.  Thus \eqref{eqn:GM} gives
\begin{equation}\label{eqn:E't}
\sum_{w\in E'_t} e^{\int_0^t \ph(f_s w)\,ds} \geq C e^{-K} e^{tP(\ph)}.
\end{equation}
Every $w\in E'_t$ is tangent to a regular closed geodesic $\gamma \in \Per_R(t+T_0-\delta,t+T_0]$, where $\Per_R$ is as  in \S\ref{s.periodicpressure}, and we have
\begin{equation}\label{eqn:Phi-phi}
\Phi(\gamma) \geq \int_0^t \ph(f_s w)\,ds - T_0\|\ph\|.
\end{equation}
Because $E'_t$ is $(t,\rho)$-separated, each $\gamma \in \Per_R(t+T_0-\delta,t+T_0]$ has at most $(t+T_0)/\rho$ elements of $E'_t$ tangent to it, so \eqref{eqn:E't} and \eqref{eqn:Phi-phi} give
\begin{equation}\label{nonprimegrowthrate}
\sum_{\gamma \in \Per_R(t+T_0-\delta,t+T_0]} e^{\Phi(\gamma)}
\geq \frac{\rho}{t+T_0} e^{-T_0\|\ph\|} C e^{-K} e^{tP(\ph)}.
\end{equation}
Now given any sufficiently large $T$, we can set $t=T - T_0$ and apply \eqref{nonprimegrowthrate} to get
\begin{equation}\label{eqn:nonprimes}
\sum_{\gamma \in \Per_R(T-\delta,T]} e^{\Phi(\gamma)} \geq \frac{\rho}{T} e^{-T_0(\|\ph\| + P(\ph))} C e^{-K} e^{TP(\ph)} = \frac{\beta'}T e^{TP(\ph)},
\end{equation}
where $\beta' = \rho e^{-T_0(\|\ph\|+P(\ph))} C e^{-K}$.
\end{proof}
\begin{prop}\label{prop:periodic}
The unique equilibrium measure $\mu$ is the weak$^\ast$ limit of  weighted regular periodic orbit measures.
\end{prop}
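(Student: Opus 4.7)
The plan is to derive this proposition as an immediate consequence of the identity $\Pper(\varphi) = P(\varphi)$. As noted in \S\ref{s.periodicpressure}, once this equality holds and the equilibrium state is unique (which we have just established), Gelfert and Schapira's observation gives that every weak$^\ast$ limit of the weighted regular periodic orbit measures is an equilibrium state for $\varphi$, so by uniqueness the limit must be $\mu$. Since $\Pper(\varphi) \leq P(\varphi)$ is part of the inequality chain in \cite{GS14}, the task reduces to proving $\Pper(\varphi) \geq P(\varphi)$.

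To generate enough regular closed geodesics, I would feed the full-pressure collection $\GGG^M$ from Lemma \ref{lem:ctlems} into the closing lemma (Lemma \ref{closing}). Fix $\eta_0 > 0$ small enough that Theorem \ref{t.abstract} applies to the decomposition $(\BBB(\eta_0), \GGG(\eta_0), \BBB(\eta_0))$, and take $M$ so that $P(\GGG^M, \varphi) = P(\varphi)$. Given $\varepsilon > 0$, small $\delta > 0$, and large $t$, choose a $(t,\delta)$-separated set $E \subset \GGG^M_t$ whose partition sum exceeds $e^{t(P(\varphi) - \varepsilon)}$. Each $(v,t) \in E$ decomposes as prefix--good--suffix with prefix and suffix of length at most $M$; the middle segment $(v', t') := (f_{p(v,t)}v, t - p(v,t) - s(v,t))$ lies in $\GGG(\eta_0) \subset \CCC(\eta_0)$, with $t' \geq t - 2M$. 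Lemma \ref{closing} then produces a periodic point $w_v \in B_{t'}(v', \eps)$ of period $\tau_v \in [t', t' + T]$, for any preassigned $\eps > 0$.

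Two properties of the resulting closed geodesic $\gamma_v$ are essential. First, $\gamma_v$ is regular: since $v' \in \Reg(\eta_0)$ and $\lambda$ is continuous, for $\eps$ small enough $w_v \in \Reg$, and because $\Reg$ is flow-invariant the entire orbit of $w_v$ lies in $\Reg$, so $\gamma_v \in \Per(t + T, \Reg)$. Second, the Bowen property of $\varphi$ on $\GGG$, together with the bound $p + s \leq 2M$ and $\tau_v - t' \leq T$, yields $|\Phi(\gamma_v) - \Phi(v,t)| \leq C$ for a constant $C = C(M, T, K, \|\varphi\|_\infty)$ independent of $v$ and $t$.

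The main obstacle is controlling the multiplicity of the map $v \mapsto \gamma_v$: distinct $(t,\delta)$-separated segments could in principle close up onto the same geodesic, either at different phase shifts or through different time-reparametrizations within the interval $[t', t' + T]$. I would handle this by taking $\eps \ll \delta$ and arguing that if $v_1, v_2 \in E$ close to the same periodic orbit, then $w_{v_1}$ and $w_{v_2}$ differ by a flow-time $r$ that must satisfy $|r| < \delta/2$ for all but $O(\tau/\delta)$ choices (otherwise $(t,\delta)$-separation is violated after accounting for the shadowing); this yields a sub-exponential multiplicity factor $m(t)$. Summing $e^{\Phi(\gamma_v)} \geq e^{-C} e^{\Phi(v,t)}$ over $v \in E$ with multiplicity correction gives
\[
\sum_{\gamma \in \Per(t+T, \Reg)} e^{\Phi(\gamma)} \geq \frac{e^{-C}}{m(t)} e^{t(P(\varphi) - \varepsilon)},
\]
and taking $\limsup_{t\to\infty} \frac{1}{t}\log$ together with $\varepsilon \to 0$ gives $\Pper(\varphi) \geq P(\varphi)$, completing the proof.
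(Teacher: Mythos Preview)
Your approach is correct and matches the paper's: both reduce to showing $\Pper(\varphi) \geq P(\varphi)$ by using $P(\GGG^M, \varphi) = P(\varphi)$ from Lemma~\ref{lem:ctlems} and applying the closing lemma (Lemma~\ref{closing}) to the good middle segment of each $(v,t) \in \GGG^M$ to produce a shadowing regular periodic orbit of length at most $t + T$. The paper's proof is considerably more terse---it leaves both the multiplicity bound on $v \mapsto \gamma_v$ and the integral comparison $|\Phi(\gamma_v) - \Phi(v,t)| \leq C$ entirely implicit---so your version is a correct and more detailed rendering of the same argument (your use of the Bowen property on $\GGG$ for the integral comparison is legitimate here, though one could equally well use only uniform continuity of $\varphi$ and absorb the error by sending $\eps \to 0$ at the end).
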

\begin{proof}
Proposition \ref{prop:perasymp} shows that $\liminf_{T\to\infty} \frac 1T \log \Lambda^\ast_\Reg (\ph, T,\delta) = P(\ph)$ for any fixed $\delta>0$. It follows that $P^\ast_{\Reg} (\ph) = P(\ph)$, so the result follows from Proposition \ref{prop:equidist}.
\end{proof}


\section{The Bowen property}\label{sec:Bowen}

We show that H\"older continuous potentials on $T^1 M$ have the Bowen property on $\GGG(\eta)$. Then 
we show that the geometric potential has the Bowen property on $\GGG(\eta)$, despite the fact that it is not known whether this potential is H\"older continuous. It is immediate from these results that any potential of the form $p \ph + q \ph^u$, where $\ph$ is H\"older and $p,q\in \RR$, has the Bowen property. 

\subsection{H\"older continuous potentials}\label{sec:Holder-Bowen}

We start by working along stable and unstable leaves, then use the local product structure.

\begin{defn}\label{def:su-Holder}
A potential $\ph\colon T^1 M \to \RR$ is \emph{H\"older along stable leaves} if there are $C,\theta,\eps>0$ such that for any $v\in T^1 M$ and $w\in W_\eps^s(v)$,  we have
$|\ph(v) - \ph(w)| \leq C d^s(v,w)^\theta$.
Similarly, $\ph$ is \emph{H\"older along unstable leaves} if there are $C,\theta,\eps>0$ such that 
$|\ph(v) - \ph(w)| \leq C d^u(v,w)^\theta$
whenever $v\in T^1M$ and $w\in W_\eps^u(v)$.
\end{defn}

By \eqref{eqn:dtdcs} and \eqref{eqn:dKdu}, which bound $\dK$ in terms of $d^u$ and $d^{s}$,
 a H\"older continuous potential is H\"older along both stable and unstable leaves. 

\begin{defn}
A potential $\ph$ 
has the \emph{Bowen property along stable leaves} with respect to $\CCC \subset T^1M \times [0,\infty)$ if there are $\delta,K>0$ such that 
\[
\sup \{ |\Phi(v,t)-\Phi(w,t)| : (v,t) \in \CCC,\ w\in W_\delta^s(v) \} \leq K.
\]
A potential $\ph$ has the \emph{Bowen property along unstable leaves} with respect to $\CCC$ if there are $\delta,K>0$ such that
\[
\sup \{ |\Phi(v,t)-\Phi(w,t)| : (v,t) \in \CCC,\ w\in f_{-t}W_\delta^u(f_tv) \} \leq K.
\]
\end{defn}

\begin{lem}\label{lem:su-Bowen}
If $\ph$ is H\"older along stable leaves (respectively unstable leaves), then it has the Bowen property along stable leaves (respectively unstable leaves) with respect to $\GGG(\eta)$ for any $\eta>0$.
\end{lem}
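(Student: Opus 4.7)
The plan is to combine the H\"older hypothesis with the leafwise exponential contraction afforded by Lemma \ref{lem:Geta}, and note that H\"older$\,\times\,$exponential decay integrates to a uniform constant.

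Consider first the stable case. Let $C,\theta,\eps_0>0$ be the constants witnessing H\"older continuity along stable leaves, and let $\delta_0$ be the scale from \eqref{eqn:delta} associated to $\eta$. Set $\delta = \min(\eps_0,\delta_0)$. Fix $(v,t)\in\GGG(\eta)$ and $w\in W_\delta^s(v)$. For each $s\in[0,t]$ we have $f_sw\in W^s(f_sv)$, and by \eqref{eqn:Ws-contract},
\[
d^s(f_sv,f_sw) \leq d^s(v,w)\, e^{-\frac{\eta}{2}s} \leq \delta\, e^{-\frac{\eta}{2}s}.
\]
In particular $f_sw\in W_\delta^s(f_sv)\subset W_{\eps_0}^s(f_sv)$, so the H\"older property applies and gives
\[
|\ph(f_sv)-\ph(f_sw)| \leq C\,d^s(f_sv,f_sw)^\theta \leq C\delta^\theta e^{-\frac{\eta\theta}{2}s}.
\]
Integrating over $s\in[0,t]$ yields
\[
|\Phi(v,t)-\Phi(w,t)| \leq C\delta^\theta\int_0^t e^{-\frac{\eta\theta}{2}s}\,ds \leq \frac{2C\delta^\theta}{\eta\theta},
\]
which is the desired uniform bound $K$ independent of $(v,t)\in\GGG(\eta)$ and $w\in W_\delta^s(v)$.

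The unstable case is symmetric. With $\delta$ chosen as above (now relative to the unstable H\"older constants), fix $(v,t)\in\GGG(\eta)$ and $w\in f_{-t}W_\delta^u(f_tv)$. Applying \eqref{eqn:Wu-contract} with $w'=v$ gives
\[
d^u(f_sv,f_sw) \leq d^u(f_tv,f_tw)\, e^{-\frac{\eta}{2}(t-s)} \leq \delta\, e^{-\frac{\eta}{2}(t-s)}
\]
for every $0\leq s\leq t$, so by H\"older continuity along unstable leaves
\[
|\ph(f_sv)-\ph(f_sw)|\leq C\delta^\theta e^{-\frac{\eta\theta}{2}(t-s)},
\]
and integrating in $s$ (changing variables $u=t-s$) again gives a bound by $\tfrac{2C\delta^\theta}{\eta\theta}$.

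There is no real obstacle; the content of the lemma is that the product of a H\"older cocycle with the exponential rate of contraction guaranteed on orbits in $\GGG(\eta)$ is integrable, giving a uniform Bowen constant whose only dependence on $\eta$ is through the factor $(\eta\theta)^{-1}$.
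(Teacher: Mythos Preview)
Your proof is correct and follows essentially the same approach as the paper: combine the leafwise exponential contraction from Lemma~\ref{lem:Geta} with the H\"older estimate and integrate. Your version is slightly more careful in explicitly taking $\delta=\min(\eps_0,\delta_0)$ so that the H\"older bound is applicable at every time, a detail the paper leaves implicit.
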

\begin{proof}
We give the proof for stable leaves; the unstable case is similar.  Let $\delta>0$ be as in Lemma \ref{lem:Geta}.  Let $(v, T) \in \GGG(\eta)$ and $w \in W_\delta^s(v)$. By Lemma \ref{lem:Geta} and the H\"older property along stable leaves, we have 
$|\ph(f_t v) - \ph(f_t w)| \leq C e^{-\frac{\eta}{2}\theta t}$ for each $t\in [0,T]$. Thus, we have
\[
|\Phi(v,T) - \Phi(w,T)|
\leq C \int_0^{T} e^{-\frac{\eta}{2}\theta t}\,dt
\leq C \int_0^{\infty} e^{-\frac{\eta}{2}\theta t}\,dt.
\]
This bound is independent of $v$ and $T$, which proves the lemma.
\end{proof}

\begin{lem}\label{lem:lps-Bowen}
Given $\eta>0$, suppose that $\ph\colon T^1 M \to \RR$ has the Bowen property on $\GGG(\eta/2)$ with respect to both stable and unstable leaves.  Then $\ph$ has the Bowen property on $\GGG(\eta)$.
\end{lem}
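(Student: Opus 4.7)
The plan is to reduce the Bowen property for $\ph$ on $\GGG(\eta)$ to the two leafwise Bowen properties on $\GGG(\eta/2)$ via a single local product structure bracket at time $0$. Given $(v,t)\in\GGG(\eta)$ and $w\in B_t(v,\eps)$ with $\eps$ to be chosen small, I will form the bracket $z:=[v,w]\in W^u_{\kappa\eps}(v)\cap W^{cs}_{\kappa\eps}(w)$ and factor $|\Phi(v,t)-\Phi(w,t)|$ through $\Phi(z,t)$: the $v$-to-$z$ piece will be controlled by the unstable Bowen hypothesis applied to $(v,t)$, and the $z$-to-$w$ piece (after a small time shift $\tau$ with $|\tau|\leq\kappa\eps$ and $f_\tau z\in W^s_{\kappa\eps}(w)$) by the stable Bowen hypothesis applied to $(w,t)$. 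Since $(v,t)\in\GGG(\eta)$ forces $v,f_tv\in\Reg(\eta)$ (from the integral inequality as $\tau\to 0^+$ and $\tau\to t^-$), Lemma~\ref{l.angle} and the argument of Corollary~\ref{cor:denseleaf} supply uniform LPS constants $\delta_0,\kappa$ on $\Reg(\eta)$, and Lemma~\ref{lem:Geta} yields $(w,t)\in\GGG(\eta/2)$.

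The key estimate is that $z$ is trapped inside a Bowen ball of controlled size around $v$. Since $z\in W^{cs}(w)$ and $d^{cs}$ is non-increasing along the flow, $d^{cs}(f_sz,f_sw)\leq\kappa\eps$ for every $s\in[0,t]$; together with \eqref{eqn:dKdcs} and $w\in B_t(v,\eps)$ this gives
\[
\dK(f_sz,f_sv)\leq \dK(f_sz,f_sw)+\dK(f_sw,f_sv)\leq (\kappa+1)\eps, \qquad s\in[0,t].
\]
At the terminal time, $f_tz$ and $f_tv$ lie on a common unstable leaf and are $\dK$-close; by the local equivalence of $d^u$ with $\dK$ on small unstable plaques (noted at the end of \S\ref{sec:geometry}, with uniformity ensured by the angle bound of Lemma~\ref{l.angle} at $f_tv\in\Reg(\eta)$), this upgrades to $d^u(f_tz,f_tv)\leq C(\kappa+1)\eps$ for a constant $C=C(\eta)$ as soon as $\eps$ is small.

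Assembling the pieces, I shrink $\eps$ so that $C(\kappa+1)\eps$ lies below the scale of the unstable Bowen hypothesis and $\kappa\eps$ lies below the scale of the stable Bowen hypothesis. Then the unstable Bowen hypothesis applied on $(v,t)\in\GGG(\eta/2)$ with $z\in f_{-t}W^u_{C(\kappa+1)\eps}(f_tv)$ yields $|\Phi(v,t)-\Phi(z,t)|\leq K_0$, and the stable Bowen hypothesis applied on $(w,t)\in\GGG(\eta/2)$ with $f_\tau z\in W^s_{\kappa\eps}(w)$ yields $|\Phi(w,t)-\Phi(f_\tau z,t)|\leq K_0$. The residual time shift contributes $|\Phi(z,t)-\Phi(f_\tau z,t)|\leq 2\kappa\eps\|\ph\|_\infty$, so summing produces a uniform bound $|\Phi(v,t)-\Phi(w,t)|\leq 2K_0+2\kappa\eps\|\ph\|_\infty$, proving the Bowen property on $\GGG(\eta)$.

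The hardest point will be the key estimate. The unstable Bowen hypothesis demands $d^u$-closeness of $z$ and $v$ at time $t$, while a bracket at time $0$ only controls $d^u(z,v)$ at time $0$; that separation could in principle expand exponentially by time $t$, and no choice of $\eps$ independent of $t$ would compensate. The resolution is that the center-stable side of the bracket pins $z$ to shadow $w$, and hence $v$, in $\dK$ throughout $[0,t]$, so that local equivalence of the ambient and intrinsic metrics on the unstable leaf at the regular point $f_tv$ trades $\dK$-closeness at time $t$ directly for $d^u$-closeness, sidestepping the dynamical expansion entirely.
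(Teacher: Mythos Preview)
Your approach mirrors the paper's: bracket at time $0$, show the bracket point stays in a Bowen ball throughout $[0,t]$, then split $|\Phi(v,t)-\Phi(w,t)|$ into a stable-leaf piece, an unstable-leaf piece, and a bounded time shift. You take $z=[v,w]\in W^u(v)\cap W^{cs}(w)$; the paper takes the symmetric choice $v'\in W^{cs}(v)\cap W^u(w)$, which swaps which of $(v,t)$ and $(w,t)$ receives the stable versus unstable hypothesis, but the structure is identical.

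There is, however, a genuine gap in your key estimate. You correctly obtain $\dK(f_sz,f_sv)\leq(\kappa+1)\eps$ for all $s\in[0,t]$, and then assert that local equivalence of $d^u$ with $\dK$ ``trades $\dK$-closeness at time $t$ directly for $d^u$-closeness.'' But the inequality $d^u\leq C\,\dK$ (which, incidentally, the paper derives from uniform bounds on horosphere curvature rather than from the angle bound you invoke) is only valid under the a~priori hypothesis $d^u\leq\delta_0$; knowing only that $\dK$ is small, the global leaf $W^u(f_tv)$ could in principle recur near $f_tv$ at a point far away in $d^u$, and your inference would fail. What is missing is the intermediate-value step the paper makes explicit: the function $s\mapsto d^u(f_sz,f_sv)$ is continuous and starts below $\kappa\eps$; if it ever exceeded the Bowen scale $\delta_3$ there would be a time $s\in[0,t]$ with $\delta_3<d^u(f_sz,f_sv)\leq\delta_0$, and at that time the local equivalence combined with your $\dK$-bound gives $d^u(f_sz,f_sv)\leq C(\kappa+1)\eps<\delta_3$ (for $\eps$ chosen small enough), a contradiction. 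Insert that step and the proof is complete.
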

\begin{proof}
Since curvature of horospheres is uniformly bounded on $T^1 M$, 
there are $\delta_0,C>0$
such that for every $v\in T^1M$ and $w\in W^u(v)$ with $d^u(v,w) \leq \delta_0$, 
we have $d^u(v,w) \leq C\dK(v,w)$. 
Using Lemma \ref{lem:lps2}, let $\delta_1>0$ be such that for every $(v,T) \in \GGG(\eta)$, the foliations $W^u$, $W^{cs}$ have local product structure  with constant $\kappa$ in a $\delta_1$-neighborhood of both $v$ and $f_T v$.  By Lemma \ref{lem:Geta}, there exists $\delta_2>0$ so that  for  $(v,T)\in \GGG(\eta)$, every $w \in B_T(v, \delta_2)$ has $(w, T) \in \GGG(\eta/2)$.  Let $\delta_3,K>0$ be the constants associated to the Bowen property for $\phi$ with respect to $\GGG(\eta/2)$ along stable and unstable leaves, and assume without loss of generality that $\delta_3 < \delta_0$.

Now take $0 < \delta < \min(\delta_0, \delta_1,\delta_2,\delta_3/(2\kappa C))$.  Fix $(v,T)\in \GGG(\eta)$ and $w\in B_T(v,\delta)$.  By LPS, there is $v'\in W_{\delta\kappa}^{cs}(v) \cap W_{\delta\kappa}^u(w)$.  We claim that $f_T v' \in W_{\delta_3}^u(f_T w)$.  Suppose this fails; then there is $t\in [0,T]$ such that
\begin{equation}\label{eqn:d3d0}
\delta_3  < d^u(f_t v', f_t w) \leq \delta_0
\end{equation}
but since $v'\in W_{\delta\kappa}^{cs}(v)\subset B_T(v,\delta\kappa)$, we have
\[
\dK (f_t v', f_t w) \leq \dK(f_t v', f_t v) + \dK(f_t v, f_t w) \leq 2\delta\kappa,
\]
and so $d^u(f_tv',f_tw) \leq 2\delta\kappa C < \delta_3$, contradicting \eqref{eqn:d3d0}.  It follows that $v'\in f_{-T} W_{\delta_3}^u(f_Tw)$.  Let $\rho\in [-\kappa\delta,\kappa\delta]$ be such that $f_\rho(v') \in W_{\delta_3}^s(v)$; then
\begin{multline*}
|\Phi(v,T) - \Phi(w,T)| \leq |\Phi(v,T) - \Phi(f_\rho v',T)|
+ |\Phi(f_\rho v',T) - \Phi(v',T)| \\
\qquad
+ |\Phi(v',T) - \Phi(w,T)| 
\leq K + 2\kappa\delta\|\ph\| + K.\qedhere
\end{multline*}
\end{proof}

The following is an immediate consequence of Lemmas \ref{lem:su-Bowen} and \ref{lem:lps-Bowen}.

\begin{cor} \label{cor:h-b}
If $\ph$ is H\"older continuous, then it has the Bowen property with respect to $\GGG(\eta)$ for any $\eta>0$.  
\end{cor}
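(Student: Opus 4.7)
The plan is to simply concatenate the two preceding lemmas. The argument has three steps.

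First, I would verify that a H\"older continuous potential $\ph\colon T^1M\to\RR$ is automatically H\"older along both stable and unstable leaves in the sense of Definition \ref{def:su-Holder}. This was already remarked in the paragraph after that definition: for $w\in W_\eps^s(v)$, the bound \eqref{eqn:dKdcs} gives $\dK(v,w)\leq d^s(v,w)$, so any H\"older estimate in the Sasaki/Knieper distance transfers to a H\"older estimate in the intrinsic stable distance. For $w\in W_\eps^u(v)$, the bound \eqref{eqn:dKdu} gives $\dK(v,w)\leq e^{\Lambda}d^u(v,w)$, so the same is true along unstable leaves (with a slightly worse constant that is absorbed into the H\"older constant).

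Second, I would apply Lemma \ref{lem:su-Bowen} with the $\eta$ of the corollary replaced by $\eta/2$. Since $\ph$ is H\"older along both stable and unstable leaves, that lemma produces constants $\delta_s,K_s$ and $\delta_u,K_u$ such that $\ph$ satisfies the Bowen property along stable leaves and along unstable leaves, respectively, on the collection $\GGG(\eta/2)$.

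Third, I would invoke Lemma \ref{lem:lps-Bowen}, which is exactly designed to combine a stable-leaf and an unstable-leaf Bowen estimate on $\GGG(\eta/2)$ into a single Bowen estimate on $\GGG(\eta)$ via local product structure at the endpoints of orbit segments in $\GGG(\eta)$. This yields the Bowen property on $\GGG(\eta)$, completing the proof. There is no real obstacle here; all the work has already been absorbed into Lemmas \ref{lem:su-Bowen} and \ref{lem:lps-Bowen}, and the only thing to check is the routine translation of the hypothesis ``H\"older continuous'' into ``H\"older along stable and unstable leaves,'' which follows immediately from the comparison estimates between $\dK$ and the intrinsic metrics $d^s,d^u$.
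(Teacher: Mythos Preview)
Your proof is correct and is exactly the argument the paper has in mind: the corollary is stated as an immediate consequence of Lemmas \ref{lem:su-Bowen} and \ref{lem:lps-Bowen}, together with the remark after Definition \ref{def:su-Holder} that H\"older continuity implies H\"older along leaves via \eqref{eqn:dKdcs} and \eqref{eqn:dKdu}. You have simply unpacked the two-lemma concatenation, including the $\eta\mapsto\eta/2$ shift needed to feed Lemma \ref{lem:su-Bowen} into Lemma \ref{lem:lps-Bowen}.
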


\subsection{The geometric potential} 

The \emph{geometric potential} for the geodesic flow is given by
\[
\vg(v)=-\lim_{t\to 0} \frac{1}{t}\log \det(df_t|_{E^u_v})
= -\frac d{dt}\Big|_{t=0} \log \det(df_t|_{E^u_v}).
\]

When $M$ has dimension $2$, the function $\ph^u$ is H\"older along unstable leaves \cite[Proposition III]{GW99}, and so the problem of proving the Bowen property for $\ph^u$ on $\GGG(\eta)$ reduces to proving it along stable leaves, where it is not known whether $\ph^u$ is H\"older.  In higher dimensions, it is not known whether $\ph^u$ is H\"older continuous on either stable or unstable leaves; an advantage of our approach is that we sidestep the question of H\"older regularity by proving the Bowen property on $\GGG$ directly.

We will find it more convenient to work with the potential function
\begin{equation}\label{eqn:psi-u}
\psi^u(v)=-\lim_{t\to 0}\frac 1 t \log \det(J^u_{v,t})
= -\frac d{dt}\Big|_{t=0} \log \det(J^u_{v,t}),
\end{equation}
where $J^u_{v,t}\colon v^{\perp}\to (f_t v)^{\perp}$  is the linear map  that takes $w\in v^{\perp}$ to the value at $t$ of the unstable Jacobi field along $\gamma_v$ that has value $w$ at $0$.
Note that for all $t$,  $(f_t v)^\perp$ is a subspace of $T_{\pi(f_tv)} M$ with norm induced by the Riemannian metric, whereas $E^u_{f_tv}$ is a subspace of $T_{f_tv} T^1M$, so the Jacobian determinants computed in  $\ph^u$ and $\psi^u$ give different values.  However, we can use \eqref{eqn:unif-comp}, which tells us that these norms are uniformly comparable, to show that the rate at which $J_{v,t}^u$ and $df_t$ expand volumes are also uniformly comparable.

\begin{lem}\label{c.equiv_pressure} 
There exists $K$ so that $|\int_0^T \ph^u(f_t v)\,dt - \int_0^T \psi^u(f_t v)\,dt| \leq K$ for all $v\in T^1M$ and $T>0$.
\end{lem}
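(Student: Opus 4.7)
The plan is to reduce both integrals to logarithms of Jacobians via telescoping, conjugate those Jacobians through a uniformly bounded linear isomorphism, and observe that the difference collapses to a boundary term that is bounded independently of $v$ and $T$.

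First, I would use the cocycle identities
\[
df_{t+s}|_{E^u_v} = (df_t|_{E^u_{f_s v}}) \circ (df_s|_{E^u_v}), \qquad J^u_{v,t+s} = J^u_{f_s v,\,t} \circ J^u_{v,s},
\]
(the second being immediate from the definition of $J^u$ as evaluation of an unstable Jacobi field along $\gamma_v$) to integrate the definitions of $\vg$ and $\psi^u$ explicitly:
\[
\int_0^T \vg(f_s v)\,ds = -\log\det(df_T|_{E^u_v}), \qquad \int_0^T \psi^u(f_s v)\,ds = -\log\det J^u_{v,T}.
\]
Thus it suffices to bound $\bigl|\log\det(df_T|_{E^u_v}) - \log\det J^u_{v,T}\bigr|$ uniformly in $(v,T)$.

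Second, the natural isomorphism bridging these two maps is the evaluation map $L_v\colon E^u_v \to v^\perp$ defined by $\xi \mapsto J_\xi(0)$. By convexity of $\|J\|^2$ in nonpositive curvature, no nontrivial unstable Jacobi field can vanish at time $0$, so $L_v$ is a linear isomorphism between $(n-1)$-dimensional inner product spaces (the Sasaki metric on $E^u_v$ and the Riemannian metric on $v^\perp$). Flow invariance of unstable Jacobi fields yields the intertwining identity $L_{f_T v} \circ (df_T|_{E^u_v}) = J^u_{v,T} \circ L_v$, since for $\xi \in E^u_v$ the Jacobi field associated to $df_T\xi$ is $t \mapsto J_\xi(T+t)$. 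Taking determinants gives
\[
\log\det(df_T|_{E^u_v}) - \log\det J^u_{v,T} = \log|\det L_v| - \log|\det L_{f_T v}|,
\]
so it remains only to bound $|\log\det L_v|$ uniformly in $v \in T^1M$.

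Third, this uniform bound follows from \eqref{compare} and Lemma \ref{J'-is-U}: for $\xi \in E^u_v$, the associated Jacobi field is an $H^u(v)$-Jacobi field, so $J'_\xi(0) = \UUU^u(v) J_\xi(0)$, and since $\|\UUU^u(v)\| \leq \Lambda$ uniformly,
\[
\|J_\xi(0)\|^2 \;\leq\; \|\xi\|^2 \;=\; \|J_\xi(0)\|^2 + \|\UUU^u(v) J_\xi(0)\|^2 \;\leq\; (1+\Lambda^2)\|J_\xi(0)\|^2.
\]
All singular values of $L_v$ therefore lie in $[(1+\Lambda^2)^{-1/2},1]$, so $|\log|\det L_v|| \leq \tfrac{n-1}{2}\log(1+\Lambda^2)$, and one may take $K := (n-1)\log(1+\Lambda^2)$. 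No step presents a genuine obstacle once one recognizes that $\vg$ and $\psi^u$ differ only by a total derivative of the (uniformly bounded) log-Jacobian of $L_v$; the one point worth verifying with care is that $L_v$ really is a bijection, but this is a direct consequence of convexity of orthogonal Jacobi fields in nonpositive curvature.
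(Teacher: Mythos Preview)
Your proof is correct and follows essentially the same approach as the paper: both compare the two Jacobians via the projection $d\pi|_{E^u_v}\colon E^u_v \to v^\perp$ (your map $L_v$), and both invoke the uniform comparability \eqref{eqn:unif-comp} to bound the resulting boundary term. The paper phrases this in the language of volume forms---showing $(d\pi)_*\omega_v$ is uniformly comparable to $\omega'_{\pi v}$---rather than determinants and an explicit intertwining identity, but the content is identical.
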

\begin{proof}
Given $p\in M$ and $v\in T_p^1M$, the canonical projection $\pi\colon T^1 M \to M$ has derivative $d\pi_v\colon T_vT^1M \to T_pM$ that sends $E_v^u$ onto $v^\perp$. By \eqref{eqn:unif-comp}, the Sasaki norm on $E_v^u$ and the Riemannian norm on $v^\perp$ satisfy  
\[
(1+\Lambda^2)^{-1/2} \| \xi\| \leq \|d\pi_v\xi\| \leq \|\xi\| 
\text{ for all } \xi\in E_v^u.
\]
It follows that $(1+\Lambda^2)^{-n/2} \leq \det d\pi_v \leq 1$ for all $v\in T^1M$. Since $df_T = d\pi_{f_T v}^{-1} \circ J_{v,T}^u \circ d\pi_v$, we have $\det df_T = \det(d\pi_{f_T v})^{-1} \det (J_{v,T}^u) \det (d\pi_v)$, and thus
\begin{align*}
\left |\int_0^T \ph^u(f_t v)\,dt - \int_0^T \psi^u(f_t v)\,dt \right | & = \left |\log \det df_T - \log \det J_{v,T}^u \right | \\ &\leq |\log \det d\pi_{f_T v}| + |\log \det d\pi_v| \\
&\leq 2|\log (1+\Lambda^2)^{-n/2}| = n\log(1+\Lambda^2). \qedhere
\end{align*}

\end{proof}

It follows that $q\ph^u$ and $q\psi^u$ share the same equilibrium states for any $q \in \RR$,
and that $q\ph^u$ has the Bowen property on $\GGG(\eta)$ if and only if $\psi^u$ does. From now on, we work with $\psi^u$. 

Given $v\in T^1M$ and $t\in \RR$, we define $\UUU^u_v(t)$ to be the second fundamental form of the unstable horosphere $H^u(f_tv)$, as in \S\ref{sec:lambda-def}. Then  $\UUU^u_v(t)$ is a positive semidefinite symmetric linear operator on $(f_t v)^\perp$ such that if $J(t)$ is an unstable Jacobi field along $\gamma_v$, then $J'(t) = \UUU^u_v(t) J(t)$; see Lemma \ref{J'-is-U}.  

Now \eqref{eqn:psi-u} gives $\psi^u(v) = -\tr\UUU^u_v(0)$, so
\begin{equation}\label{eqn:psi-u-trace}
\int_0^T \psi^u(f_t v)\,dt = -\int_0^T \tr\UUU^u_v(t)\,dt.
\end{equation}

The rest of this section is devoted to proving the following.

\begin{prop}\label{prop:bowenforgeo}
For every $\eta>0$ there are $\delta,Q,\xi>0$ such that given any $(v,T)\in \GGG(\eta)$, $w\in W_\delta^s(v)$, and $w'\in f_{-T} W_\delta^u(f_{T} v)$, for every $0\leq t\leq T$ we have
\begin{align}\label{eqn:geom-bowen}
|\tr\UUU_v^u(t) - \tr\UUU_w^u(t)| &\leq Q e^{-\xi t}, \\
\label{eqn:geom-bowen-2}
|\tr\UUU_v^u(t) - \tr\UUU_{w'}^u(t)| &\leq Q \big(e^{-\xi t} + e^{-\xi(T - t)}\big).
\end{align}
\end{prop}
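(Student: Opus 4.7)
The plan is to use the matrix Riccati equation satisfied by $\UUU^u$ and derive a Gronwall-type estimate for the difference of solutions along two close geodesics. Fix parallel orthonormal frames along $\gamma_v$ and $\gamma_w$, identified at time $0$ via a canonical choice, so that $\UUU^u_v(t)$ and $\UUU^u_w(t)$ become symmetric $(n-1)\times(n-1)$ matrices satisfying
\begin{equation*}
(\UUU^u)'(t) + \UUU^u(t)^2 + R(t) = 0,
\end{equation*}
where $R(t)$ is the curvature operator $X\mapsto R(X,\dot\gamma)\dot\gamma$ along the respective geodesic. By Lemma \ref{lem:Geta}, for $w\in W^s_\delta(v)$ with $(v,T)\in \GGG(\eta)$ one has $d(f_tv,f_tw)\leq c\delta e^{-(\eta/2)t}$, and smoothness of the curvature tensor on $M$ yields the exponentially small forcing $|R_v(t)-R_w(t)|\leq C_1 e^{-(\eta/2)t}$ on $[0,T]$.

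Subtracting the two Riccati equations, the difference $D(t):=\UUU^u_v(t)-\UUU^u_w(t)$ satisfies the Lyapunov-type equation $D' + \UUU^u_v\,D + D\,\UUU^u_w = -(R_v-R_w)$. Both $\UUU^u_v$ and $\UUU^u_w$ are positive semidefinite, and Lemma \ref{lem:Geta} gives $(w,T)\in\GGG(\eta/2)$, so
\begin{equation*}
\int_0^\tau\bigl(\lambda^u(f_sv)+\lambda^u(f_sw)\bigr)\,ds\geq \tfrac{3\eta}{2}\tau,\qquad \tau\in[0,t].
\end{equation*}
Setting $V(t):=\|D(t)\|_F^2$ and using the trace inequality $\tr(AB)\geq \lambda_{\min}(A)\tr(B)$ for symmetric positive semidefinite $A,B$ gives $\tfrac{1}{2}V'\leq-(\lambda^u(f_\cdot v)+\lambda^u(f_\cdot w))V+\sqrt{V}\,|R_v-R_w|_F$. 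A variation-of-constants integration combining the averaged dissipation with the exponentially small forcing then produces $\|D(t)\|\leq Q e^{-\xi t}$ for some $\xi>0$. Since $|\tr D|\leq \sqrt{n-1}\,\|D\|_F$, this yields \eqref{eqn:geom-bowen}.

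For the unstable statement \eqref{eqn:geom-bowen-2} with $w'\in f_{-T}W^u_\delta(f_Tv)$, Lemma \ref{lem:Geta} gives $d^u(f_tw',f_tv)\leq\delta e^{-(\eta/2)(T-t)}$ on $[0,T]$, while monotonicity of $d^u$ in backward time keeps the orbits within $\delta e^{-(\eta/2)T}$ for all $\tau\leq 0$. Running the analogous Lyapunov estimate in reversed time from $T$ yields the $e^{-\xi(T-t)}$ contribution, while the forward comparison using the past data (which remains uniformly close) produces the complementary $e^{-\xi t}$ term.

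The main obstacle is that the lower bound on $\lambda^u_v+\lambda^u_w$ is only averaged along $[0,t]$, not pointwise, so the standard Gronwall integrating factor $\exp(-\int_s^t(\lambda^u_v+\lambda^u_w)\,dr)$ cannot be controlled uniformly on subintervals $[s,t]$. The Duhamel argument must be executed carefully: the averaged dissipation over $[0,t]$ has to be combined with the pointwise upper bound $\lambda^u\leq \Lambda$ from \S\ref{sec:lambda-def} and the exponential smallness of the forcing in such a way that a net positive rate $\xi>0$ emerges. This is the analytical heart of the argument and, together with the choice of a suitable identification of the two orthogonal complements, accounts for most of the work in the proof.
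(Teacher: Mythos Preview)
Your Lyapunov-equation approach is sound in principle---the differential inequality $\tfrac12 V'\leq-(\lambda^u_v+\lambda^u_w)V+\sqrt V\,|R_v-R_w|_F$ is correct---but the proof has a genuine gap at precisely the point you identify as ``the analytical heart.'' You write that the averaged dissipation must be combined with the pointwise upper bound $\lambda^u\leq\Lambda$ so that ``a net positive rate $\xi>0$ emerges,'' but you do not carry this out, and the suggested mechanism does not work as stated. Using $\int_s^t(\lambda^u_v+\lambda^u_w)\geq(3\eta/2)t-2\Lambda s$ in the Duhamel integral together with the forcing bound $Ce^{-(\eta/2)s}$ produces an integrand of order $e^{(2\Lambda-2\eta)t}$ near $s=t$, which diverges. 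One can salvage this by also invoking the trivial bound $e^{-\int_s^t}\leq 1$ and splitting the integral at $s\sim(\eta/\Lambda)t$, but this two-regime argument (which yields only $\xi\sim\eta^2/\Lambda$) is absent from your sketch. A much cleaner fix is to keep the forcing in its integrated form $|R_v(s)-R_w(s)|\leq C e^{-\int_0^s\tl(f_\tau v)\,d\tau}$ from Lemma~\ref{lem:integrate} and note that $\lambda^u_v+\lambda^u_w\geq\lambda_v\geq\tl_v$ \emph{pointwise}; then the exponents telescope, $e^{-\int_s^t\tl_v}\cdot e^{-\int_0^s\tl_v}=e^{-\int_0^t\tl_v}\leq e^{-(\eta/2)t}$, and the Duhamel integral is bounded by $Ct\,e^{-(\eta/2)t}$. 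For the unstable estimate, ``running the Lyapunov estimate in reversed time from $T$'' is misleading: the Riccati equation for $\UUU^u$ is not dissipative backward in time. The $e^{-\xi(T-t)}$ term should instead come from the \emph{forward} Duhamel integral using $|R_v-R_{w'}|\leq Ce^{-(\eta/2)(T-s)}$ together with the trivial $e^{-\int_s^t}\leq 1$, while the $e^{-\xi t}$ term comes from the homogeneous part and the uniform bound on $\|D(0)\|$.

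The paper organizes the argument differently in a way that makes the telescoping automatic and sidesteps the frame-identification issue. Rather than subtracting the two Riccati solutions directly, it inserts the intermediate quantity $\RRR^v_{0,t}\UUU^u_w(0)$ and bounds two pieces: (i) same Riccati flow, different initial data in the invariant domain $D$, handled by an order-theoretic contraction lemma (monotonicity of Riccati plus Weyl's inequality, not Gronwall); and (ii) same initial data, different flows, handled by a path $s\mapsto\RRR^v_{s,t}\RRR^w_{0,s}\UUU_0$ whose length in the trace pseudo-metric is bounded by an integral where the factor $e^{-\int_s^t\tl_v}$ pairs naturally with the curvature difference $\|\KKK(f_sv)-\KKK(f_sw)\|\leq Ce^{-\int_0^s\tl_v}$. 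Working with $\rho(A,B)=|\tr A-\tr B|$ throughout has the further advantage that the comparison is frame-independent, so the ``suitable identification of the two orthogonal complements'' never needs to be made precise; by contrast, your Frobenius-norm argument genuinely depends on that identification, and controlling parallel-frame drift along the two geodesics is an additional (though manageable) technical step you have not addressed.
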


Since $\int_0^T \psi^u(f_t v)\,dt = -\int_0^T \tr\UUU^u_v(t)\,dt$,  \eqref{eqn:geom-bowen} shows that $\psi^u$ has the Bowen property on $\GGG(\eta)$ along stable leaves, and \eqref{eqn:geom-bowen-2} 
gives it along unstable leaves. Thus, by Lemma \ref{lem:lps-Bowen}, $\psi^u$ has the Bowen property on $\GGG(2 \eta)$, so we obtain the desired result:

\begin{cor} \label{cor:bow}
For every $\eta>0$, the potential $\psi^u$, and thus the potential $\ph^u$, has the Bowen property on $\GGG(\eta)$.
\end{cor}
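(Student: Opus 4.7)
The plan is to obtain the Bowen property by integrating the pointwise estimates of Proposition~\ref{prop:bowenforgeo}, then passing from stable/unstable leaves to Bowen balls via Lemma~\ref{lem:lps-Bowen}, and finally transferring from $\psi^u$ to $\varphi^u$ using Lemma~\ref{c.equiv_pressure}.

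First I would fix $\eta>0$ and apply Proposition~\ref{prop:bowenforgeo} with parameter $\eta/2$ to obtain constants $\delta, Q, \xi > 0$ such that for every $(v,T)\in \GGG(\eta/2)$ and every $w\in W_\delta^s(v)$ the bound $|\tr\UUU_v^u(t) - \tr\UUU_w^u(t)| \leq Q e^{-\xi t}$ holds for $0\le t\le T$. Combining this with the identity \eqref{eqn:psi-u-trace}, I would integrate in $t$:
\[
|\Psi^u(v,T) - \Psi^u(w,T)| = \left|\int_0^T \bigl(\tr\UUU_v^u(t) - \tr\UUU_w^u(t)\bigr)\,dt\right| \leq \int_0^\infty Q e^{-\xi t}\,dt = \frac{Q}{\xi},
\]
which is a constant independent of $v,T,w$. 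This is exactly the Bowen property of $\psi^u$ along stable leaves with respect to $\GGG(\eta/2)$. The analogous integration of \eqref{eqn:geom-bowen-2} for $w'\in f_{-T}W_\delta^u(f_Tv)$ yields the bound $2Q/\xi$, giving the Bowen property along unstable leaves on $\GGG(\eta/2)$.

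Next I would invoke Lemma~\ref{lem:lps-Bowen}, applied with $\eta$ replaced by $\eta$ (so that its hypothesis holds on $\GGG(\eta/2)$, as just established). The lemma then delivers the Bowen property for $\psi^u$ on $\GGG(\eta)$. Since $\eta > 0$ was arbitrary, $\psi^u$ has the Bowen property on $\GGG(\eta)$ for every $\eta > 0$.

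Finally, to transfer from $\psi^u$ to $\varphi^u$, I would use Lemma~\ref{c.equiv_pressure}, which gives a uniform constant $K$ with $|\Phi^u(v,T) - \Psi^u(v,T)| \leq K$ for all $(v,T)$. A triangle inequality applied to $v$ and any $w\in B_T(v,\eps)$ (with $\eps$ the Bowen scale obtained for $\psi^u$) yields $|\Phi^u(v,T) - \Phi^u(w,T)| \leq 2K + Q/\xi + 2Q/\xi$ (or whatever the final constant from Lemma~\ref{lem:lps-Bowen} is), so $\varphi^u$ inherits the Bowen property on $\GGG(\eta)$ with a slightly larger constant. The main conceptual point, and the only real input beyond bookkeeping, is the reduction granted by Proposition~\ref{prop:bowenforgeo} and Lemma~\ref{lem:lps-Bowen}; everything else is an absolutely convergent integral and a triangle inequality. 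There is no serious obstacle, as both key ingredients are already established.
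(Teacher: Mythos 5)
Your proposal is correct and follows essentially the same route as the paper: integrate the pointwise estimates of Proposition~\ref{prop:bowenforgeo} to get the Bowen property along stable and unstable leaves, combine via Lemma~\ref{lem:lps-Bowen}, and transfer from $\psi^u$ to $\varphi^u$ via Lemma~\ref{c.equiv_pressure}. The only cosmetic difference is that you feed $\eta/2$ into Proposition~\ref{prop:bowenforgeo} so Lemma~\ref{lem:lps-Bowen} lands directly on $\GGG(\eta)$, whereas the paper starts from $\eta$ and lands on $\GGG(2\eta)$ (and then relies on $\eta$ being arbitrary) --- the same argument up to relabeling.
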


To prove Proposition \ref{prop:bowenforgeo}, we study $\UUU_v^u(t)$ by using the fact that its time evolution is governed by a Riccati equation, which we now describe.
For $v \in T^1M$, let $\KKK(v)\colon v^\perp \to v^\perp$ be the symmetric linear map such that $\langle \KKK(v)X,Y\rangle = \langle R(X,v)v,Y\rangle$
for $X,Y \in v^\perp$. 
The eigenvalues of $\KKK(v)$ are sectional curvatures of planes containing $v$. Consequently $\KKK(v)$ is negative semidefinite.
We recall from  \eqref{eqn:Jacobi} that Jacobi fields along $\gamma_v$ evolve according to $J''(t) + \KKK(f_t v) J(t)=0$.  Lemma \ref{J'-is-U} shows that if $J(t)$ arises from varying $\gamma=\gamma_v$ through unit speed geodesics orthogonal to a hypersurface $H$, then $J'(t) = \UUU(t) J(t)$, where $\UUU(t)$ is the second fundamental form of $f_t H$. 
Differentiating this, the second-order ODE above becomes 
\[
0 = J''(t) + \KKK(\dot\gamma(t)) J(t)
= (\UUU'(t) + \UUU^2(t) + \KKK(\dot\gamma(t))) J(t).
\]
This shows that $\UUU(t)$ is a solution of the Riccati equation along $\gamma$:
\begin{equation}\label{e.ricc}
\UUU'(t) + \UUU^2(t) + \KKK(\dot\gamma(t)) = 0.
\end{equation}
In particular, $\UUU_v^u(t)$, which we defined as the second fundamental form of the unstable horosphere $H^u(f_t v)$, is a solution of \eqref{e.ricc}.

Using parallel translation along $\gamma$ to identify the spaces $\dot\gamma(t)^\perp$, we can represent 
$\UUU$ and $\KKK$ by symmetric $(n-1)\times(n-1)$ matrices; this matrix Riccati equation 
was introduced by Green in \cite{lG58}.  
 When $M$ is a surface, 
 the Riccati equation \eqref{e.ricc} along $\gamma_v$ becomes
\begin{equation}\label{e.ricc-2}
U'(t) + U^2(t) + K(f_t v) = 0,
\end{equation}
where $K(f_t v)$ is the Gaussian curvature at $\gamma_v(t)$. A nice exposition of the Riccati equation for non-positive curvature surfaces is in \cite{aM81}. 

We now prove Proposition \ref{prop:bowenforgeo}. 
Let $V$ be the space of symmetric $(n-1)\times (n-1)$ matrices, equipped with the semi-metric
\[
\rho(\AAA,\BBB) = |\tr\AAA - \tr\BBB|.
\]
Given $v\in T^1M$ and $s \leq t \in \RR$, let $\RRR_{s,t}^v\colon V\to V$ denote the time-evolution map from time $s$ to time $t$ for the nonautonomous ODE 
\begin{equation} \label{e.ricc-3}
\UUU'(\tau) + \UUU^2(\tau) + \KKK(f_\tau v) = 0.
\end{equation}
That is, $\RRR_{s, t}^v(\AAA)= \UUU(t)$, where $\UUU$ is the solution of \eqref{e.ricc-3} with $\UUU(s)=\AAA$. Then given $v,w\in T^1M$, we have 
\begin{multline}\label{eqn:rhoUvUw}
\rho(\UUU_v^u(t), \UUU_w^u(t)) = \rho(\RRR_{0,t}^v \UUU_v^u(0), \RRR_{0,t}^w \UUU_w^u(0)) \\
\leq \rho(\RRR_{0,t}^v\UUU_v^u(0), \RRR_{0,t}^v\UUU_w^u(0))
+\rho(\RRR_{0,t}^v\UUU_w^u(0),\RRR_{0,t}^w\UUU_w^u(0)).
\end{multline}
To estimate the first term, we will establish contraction properties of $\RRR_{0,t}^v$ on a suitable subset of $V$.  
Given $\AAA,\BBB\in V$, write
$\AAA \succcurlyeq \BBB$ if $\AAA-\BBB$ is positive semi-definite and $\AAA \succ \BBB$ if $\AAA-\BBB$ is positive definite.  Similarly, write $\AAA \preccurlyeq \BBB$ if $\AAA-\BBB$ is negative semi-definite and $\AAA \prec \BBB$ if $\AAA-\BBB$ is negative definite.  Fix $b>0$ such that $-b^2$ is a strict lower bound for the sectional curvatures of $M$, and let $D = \{\UUU\in V : 0 \preccurlyeq \UUU \preccurlyeq b\III\}$. The following lemma, proved in \S \ref{s:invdom}, shows that $D$ is a forward-invariant domain for the maps $\RRR_{s,t}^v$.
\begin{lem}\label{lem:invdom}
For every $v\in T^1M$ and $s\leq t\in \RR$, we have $\RRR_{s,t}^v D \subset D$.
\end{lem}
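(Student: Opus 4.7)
The plan is to treat the two sides of the sandwich $0 \preccurlyeq \UUU \preccurlyeq b\III$ separately, showing that each inequality is preserved by the Riccati flow \eqref{e.ricc-3}. The argument for the lower bound will pass through the Jacobi-tensor representation of Riccati solutions and will simultaneously establish global existence of $\RRR_{s,\tau}^v(\UUU_0)$ on $[s,\infty)$; the upper-bound argument is then a one-dimensional Gronwall estimate in each direction, which requires no further existence input.

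For the lower bound, given $\UUU_0 \in D$ let $J(\tau)$ be the unique global solution of the linear matrix ODE $J''(\tau) + \KKK(f_\tau v) J(\tau) = 0$ with $J(s) = \III$ and $J'(s) = \UUU_0$; this exists on all of $\RR$ because $\KKK$ is bounded and continuous. For fixed $x \in \RR^{n-1}$, set $\phi_x(\tau) = \|J(\tau)x\|^2$. A direct computation gives
\[
\phi_x''(\tau) = 2\|J'(\tau)x\|^2 - 2\langle \KKK(f_\tau v) J(\tau)x, J(\tau)x\rangle \geq 0,
\]
using $\KKK \preccurlyeq 0$; hence $\phi_x$ is convex. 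Since $\phi_x'(s) = 2\langle \UUU_0 x, x\rangle \geq 0$, we obtain $\phi_x'(\tau) \geq 0$ and $\|J(\tau)x\| \geq 1$ for every unit $x$ and every $\tau \geq s$. So $J(\tau)$ is invertible on $[s,\infty)$ and $\langle J'(\tau)x, J(\tau)x\rangle \geq 0$ for all $x$. A standard check (using symmetry of $\UUU_0$ and $\KKK$ to show that $J^*J' = (J')^*J$ is preserved and equals $0$ at $\tau = s$) verifies that $\UUU(\tau) := J'(\tau) J(\tau)^{-1}$ is symmetric and solves \eqref{e.ricc-3} with $\UUU(s) = \UUU_0$; by uniqueness, $\UUU(\tau) = \RRR_{s,\tau}^v(\UUU_0)$. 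Substituting $y = J(\tau)x$ in the inequality above yields $\langle \UUU(\tau)y, y\rangle \geq 0$ for all $y$, so $\UUU(\tau) \succcurlyeq 0$.

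For the upper bound, set $\mathcal{W}(\tau) = b\III - \UUU(\tau)$. Substituting $\UUU = b\III - \mathcal{W}$ into \eqref{e.ricc-3} and expanding gives
\[
\mathcal{W}'(\tau) = \mathcal{W}(\tau)^2 - 2b\,\mathcal{W}(\tau) + (b^2\III + \KKK(f_\tau v)),
\]
where $b^2\III + \KKK(f_\tau v) \succcurlyeq 0$ since $-b^2$ is a lower bound for the sectional curvatures. Fix a unit $x \in \RR^{n-1}$ (identifying fibers along $\gamma_v$ by parallel translation) and let $h(\tau) = \langle \mathcal{W}(\tau)x, x\rangle$. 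Then
\[
h'(\tau) = \|\mathcal{W}(\tau)x\|^2 - 2b\,h(\tau) + \langle (b^2\III + \KKK(f_\tau v))x, x\rangle \geq -2b\,h(\tau),
\]
so $(e^{2b\tau} h(\tau))' \geq 0$ and $h(\tau) \geq h(s) e^{-2b(\tau-s)}$. The hypothesis $\UUU_0 \preccurlyeq b\III$ gives $h(s) \geq 0$, hence $h(\tau) \geq 0$ for all unit $x$ and all $\tau \geq s$; this is exactly $\UUU(\tau) \preccurlyeq b\III$.

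The only real subtlety is that \eqref{e.ricc-3} is a quadratic matrix ODE whose solutions can blow up in finite time, so one has to know a priori that the solution starting at $\UUU_0 \in D$ exists for all $\tau \geq s$ before invariance of $D$ even makes sense to assert. The Jacobi-tensor construction in the lower-bound step sidesteps this: it builds $\UUU(\tau)$ globally from the linear ODE for $J$, whose coefficients are bounded on $T^1M$, so no nonlinear blowup can occur. With global existence in hand, the two scalar comparison arguments above package together into $\RRR_{s,\tau}^v(D) \subset D$.
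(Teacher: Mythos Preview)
Your argument is correct and takes a genuinely different route from the paper's. The paper proves Lemma~\ref{lem:invdom} by first establishing a comparison principle for the matrix Riccati equation (Lemma~\ref{l.monotone}, cited from Coppel): if $\UUU_1(t_0)\succcurlyeq\UUU_2(t_0)$ then this order is preserved for all time. The lower bound $\UUU(t)\succcurlyeq 0$ is then obtained via an $\eps$-perturbation of the curvature term together with a boundary-touching argument (Lemma~\ref{l.lowerbound}), and the upper bound $\UUU(t)\preccurlyeq b\III$ by the analogous observation that $\UUU'\prec 0$ whenever $\UUU=b\III$ (Lemma~\ref{l.upperbound}). Your proof instead uses the Jacobi-tensor representation $\UUU=J'J^{-1}$ and convexity of $\|J(\tau)x\|^2$ for the lower bound, and a direct scalar Gronwall estimate on $\langle(b\III-\UUU)x,x\rangle$ for the upper bound. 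Your approach is more self-contained---it avoids the external comparison lemma and, as you note, simultaneously settles global forward existence of the Riccati solution, a point the paper leaves implicit. On the other hand, the paper's monotonicity lemma is not dead weight: it is used again in Lemmas~\ref{l.unifconv} and~\ref{l.local}, so the paper's investment pays off downstream, whereas your argument is tailored to this single invariance statement.
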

Henceforth, we use the letter $Q$ generically for a constant whose precise value will be different at different occurrences. Recall that the function $\tl \geq 0$ was defined in \S\ref{sec:Geta} as $\tl(v) = \max(0, \lambda(v) - \frac\eta2)$. 
The following lemma allows us to estimate the first term in \eqref{eqn:rhoUvUw}.

\begin{lem}\label{lem:Rcontract}
For every $\eta>0$, there is a constant $Q>0$
 such that for every $v\in T^1M$, $s\leq t\in \RR$, and $\UUU_0,\UUU_1\in D$, we have
\begin{equation}\label{eqn:Rcontract}
\rho(\RRR_{s,t}^v \UUU_0, \RRR_{s,t}^v \UUU_1) \leq Q e^{- \int_s^t \tl (f_\tau v) \,d\tau} 
\|\UUU_0 - \UUU_1\|.
\end{equation}
\end{lem}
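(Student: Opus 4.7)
The strategy is to express the difference of Riccati solutions via matrix Jacobi fields and then exploit unstable expansion in non-positive curvature.

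For $i=0,1$, let $Y_i(\tau)$ denote the matrix Jacobi solution along $\gamma_v$ with $Y_i(s)=\III$ and $Y_i'(s)=\UUU_i$, so that $\UUU_i(\tau):=\RRR^v_{s,\tau}(\UUU_i)=Y_i'(\tau)Y_i(\tau)^{-1}$. A direct computation shows the Wronskian $W(\tau):=Y_0(\tau)^T Y_1'(\tau)-Y_0'(\tau)^T Y_1(\tau)$ is constant (via $Y_i''=-\KKK Y_i$ and $\KKK^T=\KKK$), so $W(\tau)\equiv W(s)=\UUU_1-\UUU_0$. Multiplying by $(Y_0(\tau)^T)^{-1}$ on the left and $Y_1(\tau)^{-1}$ on the right, and using $\UUU_i(\tau)^T=\UUU_i(\tau)$, yields the explicit identity
\[
\UUU_1(\tau)-\UUU_0(\tau) = (Y_0(\tau)^T)^{-1}(\UUU_1-\UUU_0)\,Y_1(\tau)^{-1}.
\]
Taking traces and applying $|\tr M|\leq (n-1)\|M\|$ together with submultiplicativity reduces the lemma to the product estimate $\|Y_0(t)^{-1}\|\cdot\|Y_1(t)^{-1}\|\leq Q'\,e^{-\int_s^t\tl(f_\tau v)\,d\tau}$.

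To bound the inverses, invoke Lemma~\ref{lem:invdom}: $\UUU_i(\tau)\in D$ for all $\tau\geq s$, so for any unit vector $c$ the Jacobi field $J(\tau)=Y_i(\tau)c$ satisfies $(\log\|J\|)'(\tau)\geq\lambda_{\min}(\UUU_i(\tau))\geq 0$, whence $\|Y_i(t)^{-1}\|\leq \exp(-\int_s^t\lambda_{\min}(\UUU_i(\tau))\,d\tau)$. To relate $\lambda_{\min}(\UUU_i)$ to $\tl$, apply the identity above with $\UUU_0$ replaced by $\UUU^u(f_s v)$; the corresponding Jacobi matrix $Y^u$ then has each column an unstable Jacobi field, so Lemma~\ref{l.lambda_rate} yields $\|Y^u(\tau)^{-1}\|\leq e^{-\int_s^\tau\lambda^u(f_r v)\,dr}$, while $\UUU_i\in D$ gives $\|Y_i(\tau)^{-1}\|\leq 1$, and thus
\[
\|\UUU_i(\tau)-\UUU^u(f_\tau v)\| \leq 2b\, e^{-\int_s^\tau\lambda^u(f_r v)\,dr}.
\]
Weyl's inequality combined with $\UUU^u\succcurlyeq\lambda^u\III$ and $\lambda^u\geq\lambda\geq\tl$ gives $\lambda_{\min}(\UUU_i(\tau))\geq\tl(f_\tau v)-2b\,e^{-\int_s^\tau\lambda^u}$. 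Integrating and splitting $[s,t]$ according to whether $\lambda^u(f_\tau v)\geq\eta/4$ yields the uniform deficit bound $\int_s^t\lambda_{\min}(\UUU_i(\tau))\,d\tau\geq\int_s^t\tl(f_\tau v)\,d\tau-C_\eta$ with $C_\eta$ depending only on $\eta$ and $b$; substituting back and using $\tl\geq 0$ gives the lemma with $Q=(n-1)e^{2C_\eta}$.

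The main technical point is the uniform control of the corrective integral $\int_s^t e^{-\int_s^\tau\lambda^u(f_r v)\,dr}\,d\tau$, which a priori could diverge on orbits spending long stretches near $\Sing$ where $\lambda^u$ vanishes. The slack built into $\tl=\max(0,\lambda-\eta/2)$ precisely removes this obstruction: where $\lambda^u(f_\tau v)<\eta/4$ we have $\tl\equiv 0$ and the lemma asks for no contraction, while on the complementary ``fast'' set the change of variable $F(\tau)=\int_s^\tau\lambda^u(f_r v)\,dr$ has derivative $\geq\eta/4$, bounding the corrective integral on that set by $4/\eta$.
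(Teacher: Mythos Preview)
Your proof is correct and takes a genuinely different route from the paper's. The paper argues via monotonicity of the Riccati flow (if $\UUU_0 \preccurlyeq \UUU_1$ then this order persists under $\RRR^v_{s,\tau}$) together with a soft uniform-convergence statement (obtained via Dini's theorem) that every solution starting in $D$ satisfies $\UUU(\tau) \succcurlyeq \UUU^u_v(\tau) - \tfrac\eta2\III$ after a uniform time $\tau_0$; it then differentiates $\Delta(t)=\tr(\overline\UUU(t)-\underline\UUU(t))$ directly from the Riccati equation to obtain $\Delta'\leq -2\tl\,\Delta$ for $t\geq\tau_0$, and finally requires a chain argument---partitioning the segment from $\UUU_0$ to $\UUU_1$ into small pieces sandwiched between nearby upper and lower envelopes---to pass from this local estimate to the global one. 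Your Wronskian identity $\UUU_1(\tau)-\UUU_0(\tau)=(Y_0^T)^{-1}(\UUU_1-\UUU_0)Y_1^{-1}$ replaces both the monotonicity sandwich and the chain step with a single explicit formula, and your quantitative convergence bound $\|\UUU_i(\tau)-\UUU^u(f_\tau v)\|\leq 2b\,e^{-\int_s^\tau\lambda^u}$ is sharper than the paper's compactness-based $\tau_0(\eps)$. The cost is the slow/fast splitting you need to make the corrective integral $\int_s^t e^{-F(\tau)}\,d\tau$ uniform, whereas in the paper's approach this issue is absorbed into the constant $\tau_0$. As a bonus your argument actually delivers decay $e^{-2\int_s^t\tl}$ rather than $e^{-\int_s^t\tl}$, though this improvement is not needed downstream.
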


We prove Lemma \ref{lem:Rcontract} in \S \ref{s.Rcontract}. To estimate the second term in \eqref{eqn:rhoUvUw}, we fix $v,w\in T^1M$, $t\geq 0$, and $\UUU_0\in D$, and consider the function $R=R^{v,w,t}_{\UUU_0} \colon [0,t]\to D$ given by
\begin{equation}\label{eqn:rs}
R(s) = \RRR_{s,t}^v \RRR_{0,s}^w \UUU_0,
\end{equation}
so $R(s)$ evolves $\UUU_0$ by the Riccati equation for $w$ until time $s$, then evolves by the Riccati equation for $v$ from time $s$ to time $t$.  Our proof of Lemma \ref{eqn:Rcontract} shows that  $\UUU_w^u(0) \in D$, so we can set $\UUU_0=\UUU_w^u(0)$ to obtain a path in $D$ that connects $R(0) = \RRR_{0,t}^v\UUU_w^u(0)$ to $R(t) = \RRR_{0,t}^w\UUU_w^u(0)$. Thus we can estimate the second term in \eqref{eqn:rhoUvUw} by bounding the length of the path $R$ in the pseudo-metric $\rho$. 
\begin{lem}\label{lem:rho0st}
Given any $v,w\in T^1M$ and $t\geq 0$, the function $R=R^{v, w, t}_{\UUU_w^{u}(0)}$ 
satisfies the following bound for all $0 \leq s_1 \leq s_2 \leq t$:
\begin{equation}\label{eqn:abscts}
\rho(R(s_1),R(s_2)) \leq \int_{s_1}^{s_2}
Q e^{- \int_s^t \tl(f_\tau v) \,d\tau}\|\KKK(f_s v) - \KKK(f_s w)\| \,ds.
\end{equation}
\end{lem}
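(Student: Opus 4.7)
The plan is to differentiate $R$ with respect to $s$ using the semigroup property of $\RRR$, and then to apply Lemma \ref{lem:Rcontract} to obtain the desired bound on the $\rho$-length of the path $R$. First I would fix $0 \leq s < s+h \leq t$ and use the identity $\RRR^v_{s,t} = \RRR^v_{s+h,t} \circ \RRR^v_{s,s+h}$ to rewrite
\[
R(s+h) = \RRR^v_{s+h,t}\bigl(\RRR^w_{s,s+h}(\UUU_w^u(s))\bigr), \qquad
R(s) = \RRR^v_{s+h,t}\bigl(\RRR^v_{s,s+h}(\UUU_w^u(s))\bigr).
\]
Set $\AAA_h = \RRR^w_{s,s+h}(\UUU_w^u(s))$ and $\BBB_h = \RRR^v_{s,s+h}(\UUU_w^u(s))$. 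Both lie in the forward-invariant domain $D$ by Lemma \ref{lem:invdom} (applied to the ODEs along $w$ and $v$), since $\UUU_w^u(s)\in D$.

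Next, since $\AAA_h,\BBB_h\in D$, Lemma \ref{lem:Rcontract} applied to $\RRR^v_{s+h,t}$ gives
\[
\rho(R(s+h),R(s)) \leq Q\, e^{-\int_{s+h}^{t}\tl(f_\tau v)\,d\tau}\,\|\AAA_h - \BBB_h\|.
\]
The remaining task is the first-order expansion of $\|\AAA_h - \BBB_h\|$. Using the Riccati equation \eqref{e.ricc-3} with initial condition $\UUU_w^u(s)$ at time $s$, we have
\[
\AAA_h - \BBB_h = \int_{s}^{s+h} \bigl[-(\UUU_w^u(\tau))^2 - \KKK(f_\tau w)\bigr] - \bigl[-(\RRR^v_{s,\tau}\UUU_w^u(s))^2 - \KKK(f_\tau v)\bigr]\,d\tau.
\]
Because $D$ is bounded, both $\UUU_w^u(\tau)$ and $\RRR^v_{s,\tau}\UUU_w^u(s)$ agree with $\UUU_w^u(s)$ up to $O(\tau-s)$ in operator norm, so the difference of their squares is $O(h)$ uniformly in $s,v,w$; integrating over an interval of length $h$ contributes $O(h^2)$. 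Thus
\[
\|\AAA_h - \BBB_h\| = h\,\|\KKK(f_s v) - \KKK(f_s w)\| + O(h^2),
\]
with the $O(h^2)$ uniform. Since $\tl$ is bounded, $e^{-\int_{s+h}^{t}\tl(f_\tau v)\,d\tau} = e^{-\int_{s}^{t}\tl(f_\tau v)\,d\tau}\cdot e^{\int_{s}^{s+h}\tl(f_\tau v)\,d\tau}$ and the second factor is $1+O(h)$, absorbable into the error.

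Finally, I would partition $[s_1,s_2]$ by $s_1 = \sigma_0 < \sigma_1 < \cdots < \sigma_N = s_2$ and use the triangle inequality for $\rho$ to write
\[
\rho(R(s_1),R(s_2)) \leq \sum_{i=0}^{N-1} \rho(R(\sigma_i),R(\sigma_{i+1})).
\]
Applying the estimate above to each term and letting the mesh of the partition tend to zero yields \eqref{eqn:abscts}. The main subtlety is not the differentiation itself but the uniform control of the $O(h^2)$ error, which must not depend on $s$, $t$, $v$, or $w$; this is secured by the compactness of $D$ (guaranteeing a uniform bound on the nonlinear term $\UUU^2$ in the Riccati equation) together with the uniform boundedness of $\KKK$ and $\tl$ on $T^1M$.
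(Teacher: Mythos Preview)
Your argument is correct and follows essentially the same route as the paper: rewrite $R(s)$ and $R(s+h)$ so that the outer evolution is $\RRR^v_{s+h,t}$, apply Lemma~\ref{lem:Rcontract} to control the $\rho$-distance in terms of $\|\AAA_h-\BBB_h\|$, expand this difference using the Riccati equation to first order in $h$, then partition and sum. The paper packages the first-order expansion as a separate lemma (Lemma~\ref{lem:delta-s-small}) stated in $\eps$--$\delta$ form using only uniform continuity of $\KKK$, and consequently carries an auxiliary $\eps>0$ through the Riemann-sum argument, sending it to zero at the end; you instead invoke smoothness of $\KKK$ along the flow to get a uniform $O(h^2)$ remainder directly, which is a minor streamlining but not a different idea.
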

We prove Lemma \ref{lem:rho0st} in \S \ref{s.rho0st}. We now explain how  to prove Proposition \ref{prop:bowenforgeo} from  Lemmas \ref{lem:Rcontract}  and \ref{lem:rho0st}. 
Given $\eta>0$, let $\delta>0$ be as in \eqref{eqn:delta}.  Given $(v,T)\in \GGG$ and $w\in W_\delta^s(v)$, smoothness of $\KKK\colon T^1 M \to V$ together with  \eqref{eqn:Ws-contract-1} gives
\[
\|\KKK(f_s v) - \KKK(f_s w)\| \leq Q \dK(f_s v, f_s w)  
\leq Q d^s(f_s v, f_s w) \leq Q \delta 
e^{-\int_0^s \tl(f_\tau v)\,d\tau}
\]
for all $s\in [0,T]$. 
We conclude that for every $t\in [0,T]$, the integrand in \eqref{eqn:abscts} is bounded above by
\[
Q e^{-\int_0^t \tl (f_\tau v)\,dt} \leq Q e^{-\int_0^t \lambda(f_\tau v)\,dt + \frac\eta 2 t}  \leq Q e^{-\frac\eta2t},
\]
where the last inequality holds because $(v, T) \in \GGG(\eta)$. Thus, \eqref{eqn:abscts} gives  the estimate
$\rho(R(s_1),R(s_2)) \leq (s_2-s_1)Q e^{-\frac\eta2t}$.
 Fixing $\xi < \frac\eta 2$, and setting $s_1=0$, $s_2=t$, 
 we obtain
\[
\rho(\RRR_{0,t}^v \UUU_w^u(0), \RRR_{0,t}^w \UUU_w^u(0))
\leq Qt e^{-\frac\eta 2 t} < Qe^{-\xi t},
\]
which bounds the second term of \eqref{eqn:rhoUvUw}. By \eqref{eqn:Rcontract}, we have
\begin{equation}\label{eqn:Rcontract2}
\rho(\RRR_{0,t}^v\UUU_v^u(0), \RRR_{0,t}^v\UUU_w^u(0))
\leq Q e^{-\int_0^t \tilde\lambda(f_\tau v)\,d\tau }\leq Q e^{-\frac\eta 2 t},
\end{equation}
which bounds the first term of \eqref{eqn:rhoUvUw}.
Thus, both terms of \eqref{eqn:rhoUvUw} are bounded above by  $Qe^{-\xi t}$,which proves the first half of Proposition \ref{prop:bowenforgeo}.

To prove \eqref{eqn:geom-bowen-2}, first observe that when $(v,T)\in \GGG$ and $f_T w' \in W_\delta^u(f_T v)$, we can use \eqref{eqn:Wu-contract} to get
\[
\|\KKK(f_sv) - \KKK(f_s w')\| \leq Q e^{- \int_s^T \tl(f_t w')\,dt} \leq Q e^{-\frac\eta2 (T-s)}.
\]
Now letting $R=R^{v,w',t}_{\UUU^{u}_{w'}(0)}$ and $t\in [0,T]$, \eqref{eqn:abscts} gives the bound  
\begin{align*}
\rho(R(0), R(t)) & \leq Q \int_0^t  \|\KKK(f_sv) - \KKK(f_s w')\|\,ds \\
 &\leq Q \int_0^t  e^{-\frac\eta2 (T-s)}\,ds
 \leq Q e^{-\frac\eta 2(T-t)}.
\end{align*}
Thus, $\rho(\RRR_{0,t}^v \UUU_{w'}^u(0), \RRR_{0,t}^{w'} \UUU_{w'}^u(0)) \leq Q e^{-\frac\eta 2(T-t)}$.
Also, \eqref{eqn:Rcontract2} holds with $w'$ in place of $w$.  
Using these bounds in \eqref{eqn:rhoUvUw} gives \eqref{eqn:geom-bowen-2} with $\xi = \frac\eta2$. Modulo the proofs of Lemmas \ref{lem:invdom}, \ref{lem:Rcontract} and \ref{lem:rho0st}, which are given in the next sections, this completes the proof of Proposition \ref{prop:bowenforgeo}.

\subsection{Proof of Lemma \ref{lem:invdom}} \label{s:invdom}

The following three lemmas give forward invariance of the domain $D$ under the maps $\RRR_{s,t}^{v}$ for any $v \in T^1M$.

\begin{lem}\cite[p.\ 50]{Copp}\label{l.monotone}
Suppose $\UUU_1(t)$ and $\UUU_2(t)$ are symmetric solutions of \eqref{e.ricc} with $\UUU_1(t_0) \succcurlyeq \UUU_2(t_0)$. Then $\UUU_1(t) \succcurlyeq \UUU_2(t)$ for all $t$. Similarly,
if $\UUU_1(t_0) \succ \UUU_2(t_0)$, then $\UUU_1(t) \succ \UUU_2(t)$ for all $t$.
\end{lem}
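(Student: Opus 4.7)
The plan is to study the difference $W(t) := \UUU_1(t) - \UUU_2(t)$ and show that it satisfies a symmetric Lyapunov-type equation whose time evolution preserves positive (semi)definiteness.

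First, using \eqref{e.ricc} for each $\UUU_i$ and subtracting, I would obtain
\[
W'(t) = \UUU_2(t)^2 - \UUU_1(t)^2.
\]
Factoring this as $\UUU_1^2 - \UUU_2^2 = \UUU_1 W + W\UUU_2$ and setting $S(t) := \tfrac12(\UUU_1(t) + \UUU_2(t))$, so that $\UUU_1 = S + W/2$ and $\UUU_2 = S - W/2$, a short computation shows that the $W^2$ terms cancel and one is left with the symmetric Lyapunov equation
\[
W'(t) = -S(t)\,W(t) - W(t)\,S(t),
\]
in which $S(t)$ is symmetric for every $t$.

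Next, I would introduce the fundamental matrix $\Phi(t)$ defined by $\Phi'(t) = -S(t)\Phi(t)$ with $\Phi(t_0) = I$. Since $S$ is a continuous symmetric matrix-valued function along $\gamma$, $\Phi(t)$ is defined and invertible for every $t \in \RR$. A direct differentiation, using symmetry of $S(t)$, verifies that
\[
W(t) = \Phi(t)\,W(t_0)\,\Phi(t)^{T}
\]
solves the Lyapunov equation with the correct initial condition, and by uniqueness this representation is valid for all $t$.

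The monotonicity claim is then immediate: for any $v \in \RR^{n-1}$,
\[
\langle W(t)v, v\rangle \;=\; \bigl\langle W(t_0)\Phi(t)^{T}v,\, \Phi(t)^{T}v\bigr\rangle.
\]
If $W(t_0) \succcurlyeq 0$, the right-hand side is nonnegative for every $v$, so $W(t) \succcurlyeq 0$; if $W(t_0) \succ 0$, then since $\Phi(t)^T$ is invertible the vector $\Phi(t)^T v$ is nonzero whenever $v \ne 0$, giving $W(t) \succ 0$. The only delicate point in the argument is the algebraic cancellation that produces the clean form $W' = -SW - WS$ without any $W^2$ contribution; everything afterwards is a standard Lyapunov/fundamental-matrix argument, and this is why the result holds both forward and backward in time.
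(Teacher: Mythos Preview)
Your argument is correct and essentially identical to the paper's: both derive the Lyapunov equation $W' + SW + WS = 0$ for the difference $W = \UUU_1 - \UUU_2$ with $S = \tfrac12(\UUU_1+\UUU_2)$, and both conclude by exhibiting $W(t)$ as congruent to $W(t_0)$ via an invertible fundamental matrix. The only cosmetic difference is that the paper solves $\XXX' = S\XXX$ and shows $\XXX^{*}W\XXX$ is constant (invoking Sylvester's law of inertia), whereas you solve $\Phi' = -S\Phi$ and write the explicit formula $W(t) = \Phi(t)W(t_0)\Phi(t)^{T}$.
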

\begin{proof} Both $\DDD(t) = \UUU_1(t) - \UUU_2(t)$ and $\MMM(t) = \frac12(\UUU_1(t) + \UUU_2(t))$ are symmetric and by a straightforward computation, satisfy
\[
\DDD' + \DDD\MMM + \MMM\DDD = 0.
\]
Let $\XXX(t)$ be the solution of $\XXX'(t) = \MMM(t) \XXX(t)$ with $\XXX(t_0) = \III$. Then $\XXX(t)$ is non-singular for all $t$ and, since $\MMM$ is symmetric,
\[
(\XXX^*\DDD\XXX)' = \XXX^*(\DDD' + \DDD\MMM + \MMM\DDD)\XXX = 0.
\]
Thus $\XXX^*\DDD\XXX(t)$ is constant, so the signature of $\DDD(t)$ is constant.
\end{proof}

\begin{lem}\label{l.lowerbound}
Let $\UUU(t)$ be a symmetric solution of \eqref{e.ricc} with $\UUU(t_0) \succcurlyeq 0$. Then $\UUU(t) \succcurlyeq 0$ for all $t \geq t_0$.
\end{lem}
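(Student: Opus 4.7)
My plan is to reduce the matrix Riccati equation \eqref{e.ricc} to the linear Jacobi equation via the substitution $J' = \UUU J$, and then exploit the convexity of $t \mapsto |J(t)v|^2$ that comes from the nonpositivity of $\KKK$. This is the same mechanism underlying the convexity statements collected in \S\ref{sec:geometry}.

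First I would solve the linear matrix ODE $J'(t) = \UUU(t) J(t)$ with $J(t_0) = \III$ on the interval on which $\UUU$ is defined. Linearity guarantees that $J(t)$ exists throughout this interval, and it remains invertible because Liouville's formula gives $(\det J)' = (\tr \UUU)(\det J)$, hence $\det J(t) = \exp\int_{t_0}^t \tr\UUU(\tau)\,d\tau \neq 0$. Differentiating once more and substituting \eqref{e.ricc} yields
\[
J''(t) = \UUU'(t) J(t) + \UUU(t) J'(t) = (-\UUU^2 - \KKK) J(t) + \UUU(t)^2 J(t) = -\KKK(\dot\gamma(t)) J(t),
\]
so $J$ is a matrix-valued Jacobi field along $\gamma$ with $J(t_0) = \III$ and $J'(t_0) = \UUU(t_0)$.

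Next, for each fixed $v \in \mathbb{R}^{n-1}$ I would consider $\phi(t) := |J(t)v|^2 = \langle J(t)v, J(t)v\rangle$ and compute
\[
\phi'(t) = 2\langle J'(t)v, J(t)v\rangle = 2\langle \UUU(t) J(t)v, J(t)v\rangle,
\]
\[
\phi''(t) = 2|J'(t)v|^2 - 2\langle \KKK(\dot\gamma(t)) J(t)v, J(t)v\rangle \geq 0,
\]
the inequality holding because $\KKK \preccurlyeq 0$. Thus $\phi$ is convex on the interval in question. At $t_0$ the hypothesis gives $\phi'(t_0) = 2\langle \UUU(t_0)v, v\rangle \geq 0$, and convexity then forces $\phi'(t) \geq 0$ for every $t \geq t_0$. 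Unpacking, $\langle \UUU(t) J(t)v, J(t)v\rangle \geq 0$ for every $v$. Since $J(t)$ is invertible, as $v$ ranges over $\mathbb{R}^{n-1}$ so does $w := J(t)v$, and we conclude $\langle \UUU(t) w, w\rangle \geq 0$ for all $w$, i.e.\ $\UUU(t) \succcurlyeq 0$.

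I do not anticipate a serious obstacle here; the only point that warrants explicit mention is the invertibility of $J(t)$, needed so that $J(t)v$ sweeps out all of $\mathbb{R}^{n-1}$. That invertibility is an immediate consequence of $J$ solving a linear ODE with nonsingular initial condition, and the rest is a clean application of the nonpositive-curvature convexity already exploited elsewhere in the paper.
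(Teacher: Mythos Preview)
Your proof is correct and takes a genuinely different route from the paper. The paper perturbs the Riccati equation, replacing $\KKK$ by $\KKK - \eps^2\III$, then uses the monotonicity Lemma~\ref{l.monotone} together with a supremum-of-good-times argument to show the perturbed solution $\UUU_\eps$ stays positive semidefinite, and finally sends $\eps\to 0$. You instead linearize via $J' = \UUU J$, recover the Jacobi equation $J'' + \KKK J = 0$, and read off $\UUU(t)\succcurlyeq 0$ from the convexity of $t\mapsto |J(t)v|^2$ combined with the invertibility of $J(t)$.

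Your argument is shorter and entirely self-contained: it does not rely on Lemma~\ref{l.monotone} or on any limiting procedure, and it makes explicit that this lemma is the matrix analogue of the Jacobi-field convexity already used throughout \S\ref{sec:geometry}. The paper's perturbation-and-monotonicity framework, on the other hand, is organized so that the companion upper bound (Lemma~\ref{l.upperbound}) follows by an almost identical argument, keeping the analysis uniformly at the level of the Riccati equation; your linearization does not adapt as cleanly to that upper bound.
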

\begin{proof} 
Let $\UUU_\eps(t)$ be the (symmetric) solution of 
\begin{equation}\label{e.ricc-eps}
\UUU_\eps'(t) +\UUU_\eps^2(t) + \KKK(\dot\gamma(t)) -\epsilon^2\III= 0
\end{equation}
with $\UUU_\eps(t_0) = \UUU(t_0) \succcurlyeq 0$.  Then $\lim_{\eps\to 0} \UUU_\eps(t) = \UUU(t)$ for all $t$, so it suffices to prove that $\UUU_\eps(t) \succcurlyeq 0$ for all $t\geq t_0$ and $\eps>0$.  Let 
\[
S = \{t \geq t_0 \mid \UUU_\eps(t_1) \succcurlyeq 0 \text{ for all } t_1 \in [t_0,t] \}.
\]
Suppose $S$ is bounded above, and let $t_1 = \sup S$.  Let $\UUU^1_\eps$ be the solution of \eqref{e.ricc-eps} with $\UUU^1_\eps(t_1) = 0$.  By Lemma \ref{l.monotone}, we have $\UUU_\eps(t) \succcurlyeq \UUU_\eps^1(t)$ for all $t\in \RR$.  However, $(\UUU_\eps^1)'(t_1) = -\KKK(\dot\gamma(t)) + \eps^2\III$ is positive definite, so there is some $t_2>t_1$ with the property that $\UUU_\eps^1(t) \succ 0$ for all $t\in (t_1, t_2]$, and consequently $\UUU_\eps(t) \succcurlyeq 0$ for all $t\in (t_1,t_2]$.  This means that $t_2\in S$, contradicting maximality of $t_1$.  We conclude that $S = [t_0,\infty)$, which proves the lemma.
\end{proof}

Recall that $b>0$ was chosen so that $-b^2$ is a strict lower bound for the sectional curvatures of $M$.

\begin{lem}\label{l.upperbound} 
 Suppose $\UUU(t)$ is a solution of \eqref{e.ricc} with $b\III\succcurlyeq \UUU(t_0)$. Then $b\III \succcurlyeq \UUU(t)$ for $t \geq t_0$.
\end{lem}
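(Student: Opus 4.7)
The plan is to adapt the comparison argument from the proof of Lemma~\ref{l.monotone} by treating the constant matrix $b\III$ as a (strict) \emph{supersolution} of the Riccati equation~\eqref{e.ricc}. Since $-b^2$ is a \emph{strict} lower bound for the sectional curvatures and $M$ is compact, there is a fixed $\EEE_0 \succ 0$ such that $b^2\III + \KKK(\dot\gamma(t)) \succcurlyeq \EEE_0$ for every $t$. Consequently, plugging the constant $b\III$ into the left-hand side of \eqref{e.ricc} gives
\[
(b\III)' + (b\III)^2 + \KKK(\dot\gamma(t)) = b^2\III + \KKK(\dot\gamma(t)) \succcurlyeq \EEE_0 \succ 0,
\]
so $b\III$ overshoots the Riccati equation by a uniformly positive-definite amount.

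Next I would set $\DDD(t) = b\III - \UUU(t)$ and $\MMM(t) = \tfrac12(b\III + \UUU(t))$, both of which are symmetric. A direct computation, identical to the one in Lemma~\ref{l.monotone} except for the inhomogeneous term produced by the supersolution, yields
\[
\DDD'(t) + \DDD(t)\MMM(t) + \MMM(t)\DDD(t) = \EEE(t), \qquad \EEE(t) := b^2\III + \KKK(\dot\gamma(t)) \succcurlyeq \EEE_0.
\]
Now I would mimic the change of variables used in Lemma~\ref{l.monotone}: let $\XXX(t)$ be the (non-singular) solution of $\XXX'(t) = \MMM(t)\XXX(t)$ with $\XXX(t_0) = \III$. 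Symmetry of $\MMM$ gives
\[
(\XXX^*\DDD\XXX)'(t) = \XXX(t)^*\bigl(\DDD' + \DDD\MMM + \MMM\DDD\bigr)(t)\XXX(t) = \XXX(t)^*\EEE(t)\XXX(t) \succcurlyeq 0,
\]
so $t \mapsto \XXX(t)^*\DDD(t)\XXX(t)$ is monotone nondecreasing in the PSD order on $[t_0,\infty)$. Since $\XXX(t_0)^*\DDD(t_0)\XXX(t_0) = \DDD(t_0) \succcurlyeq 0$ by hypothesis, we conclude $\XXX(t)^*\DDD(t)\XXX(t) \succcurlyeq 0$ for all $t \geq t_0$, and then non-singularity of $\XXX(t)$ yields $\DDD(t) \succcurlyeq 0$, i.e., $b\III \succcurlyeq \UUU(t)$, as desired.

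The only subtle point — and the main place one must be careful — is the algebraic identity $\DDD'+\DDD\MMM+\MMM\DDD = \EEE$: one must track the asymmetry between the bona fide solution $\UUU$ and the supersolution $b\III$, noting that the extra contribution $\EEE$ arises precisely from the strictness of the curvature lower bound. Everything else is a direct transcription of the argument from Lemma~\ref{l.monotone}, and no additional perturbation (as was needed in Lemma~\ref{l.lowerbound}) is required, because the source term $\EEE$ already sits on the favorable side of the inequality.
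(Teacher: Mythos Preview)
Your argument is correct, and in fact slightly slicker than the paper's. The paper follows the template of Lemma~\ref{l.lowerbound}: one assumes the ``good'' set $S=\{t\ge t_0: b\III\succcurlyeq\UUU(t_1)\text{ for all }t_1\in[t_0,t]\}$ has a finite supremum $t_1$, compares with the solution $\UUU^1$ having $\UUU^1(t_1)=b\III$, notes that $(\UUU^1)'(t_1)=-b^2\III-\KKK(\dot\gamma(t_1))\prec 0$ (strictly, by the choice of $b$, so no $\eps$-perturbation is needed this time), and invokes Lemma~\ref{l.monotone} to reach a contradiction. You instead treat $b\III$ globally as a supersolution and run the $\XXX^*\DDD\XXX$ computation from Lemma~\ref{l.monotone} with an inhomogeneous positive-semidefinite forcing term; this avoids the $\sup$/contradiction step entirely. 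A small bonus of your route is that you only use $b^2\III+\KKK\succcurlyeq 0$ in the monotonicity step, so the strictness of the curvature bound is not actually needed (your uniform lower bound ``$\succcurlyeq$ a fixed positive-definite matrix'' is true but stronger than required). One minor remark: your argument, like the paper's, tacitly works on the maximal interval of existence of $\UUU$; the linear equation for $\XXX$ has bounded coefficients on any compact subinterval, so everything goes through there.
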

\begin{proof} 
Proceed as in Lemma \ref{l.lowerbound} by observing that
$\UUU'(t) = - \UUU^2(t) -  \KKK(\dot\gamma(t)) \prec 0$ if $\UUU(t) = b\III$ and applying Lemma~\ref{l.monotone}.
\end{proof}

We conclude that $D$ is an invariant domain for evolution under the Riccati equation \eqref{e.ricc}. Thus, for every $v\in T^1M$ and $s\leq t\in \RR$, we have $\RRR_{s,t}^v D \subset D$. 

\subsection{Proof of Lemma \ref{lem:Rcontract}} \label{s.Rcontract}
We begin by proving convergence results to $\UUU^u$ for Riccati solutions with positive semi-definite initial conditions.

\begin{lem}\label{l.unifconv}
Let $\UUU^u_{v,\tau}$ be the solution of the Riccati equation along $\gamma_v$ such that $\UUU^u_{v,\tau}(-\tau) = 0$.
Then $\UUU^u_{v,\tau}(0) \to \UUU^u_v(0)$ as $\tau \to \infty$. The convergence is uniform in $v$. 
\end{lem}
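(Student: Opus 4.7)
The plan is to use Loewner-order monotonicity to produce a pointwise limit, identify the limit with $\UUU^u_v(0)$ via the uniqueness of the unstable Riccati solution, and then upgrade pointwise convergence to uniform convergence using Dini's theorem on the compact space $T^1 M$.

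First I would show that $\{\UUU^u_{v,\tau}(0)\}_{\tau>0}$ is a monotonically increasing, bounded family in $D$. For $\tau_1 < \tau_2$, Lemma \ref{l.lowerbound} applied to $\UUU^u_{v,\tau_2}$ on $[-\tau_2,-\tau_1]$ gives $\UUU^u_{v,\tau_2}(-\tau_1) \succcurlyeq 0 = \UUU^u_{v,\tau_1}(-\tau_1)$, and Lemma \ref{l.monotone} then propagates this inequality forward along $\gamma_v$ to time $0$. The upper bound by $b\III$ is provided by Lemma \ref{lem:invdom}. Hence a pointwise limit $\UUU^\infty_v(0) \in D$ exists for every $v$.

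To identify $\UUU^\infty_v(0)$ with $\UUU^u_v(0)$, I would extend $\UUU^\infty_v$ to a Riccati solution on all of $\RR$ and note that continuous dependence of ODE solutions on initial data gives $\UUU^u_{v,\tau}(t) \to \UUU^\infty_v(t)$ for every fixed $t \in \RR$. Since $\UUU^u_{v,\tau}(t) \in D$ for all $t \geq -\tau$, the limit $\UUU^\infty_v(t)$ lies in $D$ for every $t \in \RR$, so $\UUU^\infty_v$ is positive semidefinite on $\RR$ and bounded on $(-\infty,0]$. By the uniqueness statement for the unstable Riccati solution noted after \eqref{e.ricc}, it follows that $\UUU^\infty_v = \UUU^u_v$. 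This step is the most delicate part of the argument: it requires propagating the inclusion $\UUU^u_{v,\tau}(t) \in D$ backward in time into the limit, which is what characterizes $\UUU^u_v$ among all Riccati solutions.

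For uniformity, I would apply Dini's theorem to the scalar functions $v \mapsto \tr \UUU^u_{v,\tau}(0)$. These are continuous on the compact space $T^1M$ by smoothness of $\KKK$ and standard continuous dependence of ODE solutions on parameters, they increase monotonically in $\tau$, and by the previous step they converge pointwise to the continuous function $v \mapsto \tr \UUU^u_v(0) = -\psi^u(v)$. Dini's theorem gives uniform convergence of the traces. Since $\UUU^u_v(0) - \UUU^u_{v,\tau}(0) \succcurlyeq 0$, its operator norm is bounded above by its trace, so uniform trace convergence upgrades to uniform convergence in operator norm, yielding the desired uniform-in-$v$ convergence $\UUU^u_{v,\tau}(0) \to \UUU^u_v(0)$.
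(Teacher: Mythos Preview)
Your proposal is correct and follows the same three-stage architecture as the paper: Loewner monotonicity to get a pointwise limit, identification of that limit as $\UUU^u_v(0)$, and Dini's theorem for uniformity. The monotonicity step is identical, and your Dini step is a minor variant of the paper's (you use the trace, the paper uses finitely many quadratic forms $\langle x,\cdot\, x\rangle$; both exploit that a positive semidefinite matrix has operator norm bounded by such scalar quantities).

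The identification step is where you genuinely diverge. The paper works at the level of Jacobi fields: it considers the fields $J_{v,w,\tau}$ with $J_{v,w,\tau}(0)=w$ and $J'_{v,w,\tau}(-\tau)=0$, uses convexity of $\|J\|$ to get $\|J_{v,w,\tau}(t)\|\leq\|w\|$ on $[-\tau,0]$, extracts a limiting Jacobi field bounded on $(-\infty,0]$, and recognizes it as the unstable one. You instead stay entirely on the Riccati side, propagate the inclusion $\UUU^u_{v,\tau}(t)\in D$ to the limit for every $t\in\RR$, and invoke the uniqueness statement recorded after \eqref{e.ricc}. Your route is cleaner but treats that uniqueness as a black box, whereas the paper's Jacobi-field argument is in effect a self-contained proof of the relevant special case. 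One small point to tighten in your write-up: ``continuous dependence on initial data'' for $t<0$ needs the observation that the approximating solutions remain in the compact set $D$ on $[t,0]$, which both prevents backward blow-up of the limit and justifies passing to the limit; this is implicit in what you wrote but worth making explicit, since $D$ is only forward-invariant.
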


\begin{proof} We have $\UUU^u_{v,\tau}(-\tau) = 0 \preccurlyeq \UUU^u_v(f_{-\tau}v) = \UUU^u_v(-\tau)$. It follows from Lemma~\ref{l.monotone} that $\UUU^u_{v,\tau}(t) \preccurlyeq \UUU^u_v(t)$ for all $t$, in particular when $t = 0$. On the other hand, Lemma~\ref{l.lowerbound} tells us that $\UUU^u_{v,\tau}(t) \succcurlyeq 0$ for  $t \geq -\tau$. It follows that if $0 \leq \tau_1 \leq \tau_2$, then $0 \preccurlyeq \UUU^u_{v,\tau_1}(0) \preccurlyeq \UUU^u_{v,\tau_2}(0)\preccurlyeq \UUU^u_v(0)$.  We would like to deduce that $\UUU^u_{v,\tau}(0)$ converges to $\UUU_v^u(0)$ as $\tau \to \infty$.

Observe that for every $x\in \RR^{n-1}$, the sequence $\langle x,\UUU_{v,\tau}^u(0)x\rangle$ is monotonic in $\tau$, and hence has a limit as $\tau\to\infty$.  Since this holds for every $x$, we conclude that $\lim_{\tau\to\infty} \UUU_{v,\tau}^u(0)$ exists and that it is $\preccurlyeq \UUU_v^u(0)$; it remains to show that the limit is in fact $ \UUU^u_v(0)$ for each $v$.

Let $J_{v,w,\tau}$ be a Jacobi field along $\gamma_v$ that satisfies $J_{v,w,\tau}(0) = w \in v^\perp$ and $J'_{v,w,\tau}(-\tau) = 0$.  Since the norm of a Jacobi field is a convex function, we have $\|J_{v,w,\tau}(t)\| \leq \|w\|$ for $-\tau \leq t \leq 0$. If  $\tau_k$ is a sequence such that $\tau_k \to \infty$ and $J_{v,w,\tau_k}$ is a sequence that converges to a Jacobi field $J$, then  $\|J(t)\| \leq \|J(0)\|$ for all $t \leq 0$, and hence $J$ is the unstable Jacobi field with initial value $w$. Since we have the same limit for any such subsequence, it  follows that  
$J_{v,w,\tau}$ converges as $\tau \to \infty$ to the unstable Jacobi field with initial value $w$. Thus $\UUU^u_{v,\tau}(0) \to \UUU^u_v(0)$ for each~$v$.

Now given any $x\in \RR^{n-1}$, Dini's theorem tells us that $\langle x,\UUU_{v,\tau}^u(0)x\rangle  \to \langle x ,\UUU_v^u(0)x\rangle$ uniformly in $v$.  Since a symmetric matrix $U$ is completely determined by $\langle x,Ux\rangle$ for a finite number of values of $x$, 
this shows that  $\UUU_{v,\tau}^u(0) \to \UUU_v^u(0)$ uniformly in $v$. \end{proof}
\begin{cor}
For any $v \in T^1M$, $\UUU^u_v(0) \in D$.
\end{cor}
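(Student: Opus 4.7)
The plan is to combine Lemma~\ref{l.unifconv} with the three monotonicity/invariance lemmas (Lemmas~\ref{l.monotone}, \ref{l.lowerbound}, \ref{l.upperbound}) of the previous subsection, and use that $D$ is closed under limits in $V$. The key observation is that the approximating solutions $\UUU^u_{v,\tau}$ satisfy initial conditions that lie in $D$, so their time evolution stays in $D$, and the limit is then automatically in~$D$.

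More precisely, I would argue as follows. Fix $v\in T^1M$ and $\tau>0$, and consider $\UUU^u_{v,\tau}$, the solution of the Riccati equation along $\gamma_v$ with $\UUU^u_{v,\tau}(-\tau)=0$. The initial condition satisfies $0 \preccurlyeq 0 \preccurlyeq b\III$, so it belongs to~$D$. Applying Lemma~\ref{l.lowerbound} with $t_0=-\tau$ yields $\UUU^u_{v,\tau}(0) \succcurlyeq 0$, and applying Lemma~\ref{l.upperbound} with $t_0=-\tau$ yields $\UUU^u_{v,\tau}(0) \preccurlyeq b\III$. Hence $\UUU^u_{v,\tau}(0)\in D$ for every $\tau>0$.

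By Lemma~\ref{l.unifconv}, $\UUU^u_{v,\tau}(0) \to \UUU^u_v(0)$ as $\tau\to\infty$. Since the relations $\UUU \succcurlyeq 0$ and $\UUU \preccurlyeq b\III$ are defined by non-strict inequalities of eigenvalues and are therefore preserved under taking limits in $V$ (equivalently, $D$ is a closed subset of $V$), we conclude $\UUU^u_v(0)\in D$.

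There is no real obstacle here; this corollary is a direct packaging of the invariance of~$D$ (Lemma~\ref{lem:invdom}) applied to the approximating solutions whose limit defines $\UUU^u_v(0)$. The only subtle point worth noting is that we must use the non-strict versions of both bounds, since these are the ones that pass to the limit --- but that is precisely how $D$ is defined.
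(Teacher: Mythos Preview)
Your proof is correct and essentially identical to the paper's: the paper invokes Lemma~\ref{lem:invdom} (the packaged forward-invariance of $D$, which is precisely Lemmas~\ref{l.lowerbound} and \ref{l.upperbound} combined) to get $\UUU^u_{v,\tau}(0)\in D$, and then uses compactness of $D$ together with Lemma~\ref{l.unifconv} to pass to the limit. The only cosmetic difference is that you unpack the invariance into its two constituent lemmas and phrase the passage to the limit via closedness rather than compactness.
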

\begin{proof}
Lemma \ref{lem:invdom} tells us that $\UUU^u_{v,\tau}(0) \in D$  for all $\tau$. Since $D$ is compact, it follows from Lemma \ref{l.unifconv} that $\UUU_v^u(0) \in D$. 
\end{proof}

\begin{prop}\label{p.unifconv}
For each  $\epsilon > 0$ there is $\tau_0(\epsilon)>0$ such that if $\UUU(t)$ is a solution of the Riccati equation along the geodesic $\gamma_v$ and $t_0\in \RR$ is such that $\UUU(t_0) \succcurlyeq 0$, then $\UUU(t) \succcurlyeq \UUU^u_v(t)  -\epsilon\III$ for every $t\geq t_0 + \tau_0(\eps)$.
\end{prop}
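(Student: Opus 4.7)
The plan is to reduce the statement to Lemma \ref{l.unifconv} (uniform convergence of the ``$\tau$-approximants'' to $\UUU^u$) by sandwiching $\UUU(t)$ between an auxiliary Riccati solution started at zero at time $t_0$ and $\UUU^u_v(t)$, using the monotonicity results from \S\ref{s:invdom}.

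First, let $\tilde\UUU(t)$ be the solution of the Riccati equation \eqref{e.ricc} along $\gamma_v$ with initial condition $\tilde\UUU(t_0) = 0$. Since $\UUU(t_0) \succcurlyeq 0 = \tilde\UUU(t_0)$, Lemma~\ref{l.monotone} gives $\UUU(t) \succcurlyeq \tilde\UUU(t)$ for all $t \geq t_0$. Thus it suffices to show that
\[
\tilde\UUU(t_0+\tau) \succcurlyeq \UUU^u_v(t_0+\tau) - \eps\III
\]
whenever $\tau \geq \tau_0(\eps)$, with $\tau_0(\eps)$ independent of $v$ and $t_0$.

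Next, I would exploit translation invariance of the Riccati equation along the orbit. Set $w = f_{t_0+\tau}v$ and consider the reparametrized solution $s \mapsto \tilde\UUU(t_0+\tau+s)$; this is a solution of the Riccati equation along $\gamma_w$ whose value at $s = -\tau$ is $0$. By the definition of $\UUU^u_{w,\tau}$ in Lemma~\ref{l.unifconv}, this means
\[
\tilde\UUU(t_0+\tau) \;=\; \UUU^u_{w,\tau}(0), \qquad \UUU^u_v(t_0+\tau) \;=\; \UUU^u_w(0).
\]
Lemma~\ref{l.unifconv} provides uniform convergence $\UUU^u_{w,\tau}(0) \to \UUU^u_w(0)$ as $\tau \to \infty$, uniformly in $w \in T^1M$. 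Hence, choosing $\tau_0(\eps)$ large enough that the operator norm of the difference is less than $\eps$ for every $w$ when $\tau \geq \tau_0(\eps)$, we obtain
\[
\UUU^u_{w,\tau}(0) - \UUU^u_w(0) \succcurlyeq -\eps\III
\]
(since both matrices are symmetric, a bound on operator norm bounds the spectrum). Combining this with $\UUU(t) \succcurlyeq \tilde\UUU(t)$ yields the desired inequality $\UUU(t) \succcurlyeq \UUU^u_v(t) - \eps\III$ for every $t \geq t_0 + \tau_0(\eps)$.

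The only nontrivial ingredient is that operator-norm closeness implies the Loewner-order inequality; this is routine for symmetric matrices. The substance is entirely packaged in Lemma~\ref{l.unifconv}; there is no additional obstacle beyond recognizing the correct time translation and invoking monotonicity from Lemma~\ref{l.monotone}.
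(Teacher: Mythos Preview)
Your proof is correct and follows essentially the same approach as the paper: reduce to the zero initial condition via Lemma~\ref{l.monotone}, then translate the base point to $w=f_{t_0+\tau}v$ so that the comparison becomes $\UUU^u_{w,\tau}(0)$ versus $\UUU^u_w(0)$, and invoke the uniform convergence from Lemma~\ref{l.unifconv}. The paper's proof is terser (it compresses the monotonicity step into the substitution), but the substance is identical; your version simply makes the role of Lemma~\ref{l.monotone} explicit.
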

\begin{proof}
Lemma \ref{l.unifconv} gives $\tau_0 = \tau_0(\eps)$ such that $\UUU_{w,\tau}^u(0) \succcurlyeq \UUU_w^u(0) - \eps\III$ for all $w\in T^1M$ and $\tau \geq \tau_0(\eps)$.  Let $w = f_\tau v$ and $\tau = t$.
\end{proof}

To prove Lemma \ref{lem:Rcontract}, it suffices to consider the case when $s=0$; to obtain the result when $s\neq 0$, replace $v\in T^1M$ by $f_s v$. By Proposition \ref{p.unifconv}, there is $\tau_0 = \tau_0(\frac\eta 2)$ such that for any $v\in T^1M$ and $\UUU_0 \in D$, we have $\RRR_{0,t}^v \UUU_0 \succcurlyeq \UUU_v^u(t) - \frac\eta2 \III$ for all $t\geq \tau_0$. We start by proving an estimate that is useful for controlling the pseudo-metric $\rho$ locally. 
For $\UUU \in D$, we write $\UUU(t)$ to denote $\RRR_{0,t}^v\UUU$.

\begin{lem} \label{l.local}
If $\underline\UUU, \overline\UUU \in D$ have 
$\underline\UUU \preccurlyeq \UUU_j \preccurlyeq \overline\UUU$ for $j=0,1$, then
\begin{equation}\label{eqn:rhoDelta}
\rho(\UUU_0(t),\UUU_1(t)) \leq e^{\tau_0\|\lambda\|} e^{-\int_0^t\tl(f_\tau v)\,d\tau}(\tr\overline\UUU - \tr\underline\UUU).
\end{equation}
\end{lem}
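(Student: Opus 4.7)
The plan is to bound $\rho(\UUU_0(t),\UUU_1(t))$ by the trace of the gap $\DDD(t) := \overline\UUU(t) - \underline\UUU(t)$, then control $\tr \DDD(t)$ via a Gronwall argument driven by the Riccati equation. First, by Lemma~\ref{l.monotone} (applied in both directions) the hypothesis $\underline\UUU \preccurlyeq \UUU_j \preccurlyeq \overline\UUU$ propagates to $\underline\UUU(t) \preccurlyeq \UUU_j(t) \preccurlyeq \overline\UUU(t)$ for every $t \geq 0$. Taking traces gives $|\tr\UUU_0(t) - \tr\UUU_1(t)| \leq \tr\DDD(t)$, so it suffices to bound $\tr\DDD(t)$.

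Next, as in the proof of Lemma~\ref{l.monotone}, the Riccati equation yields
\[
\DDD'(t) + \MMM(t)\DDD(t) + \DDD(t)\MMM(t) = 0, \qquad \MMM(t) := \tfrac12(\overline\UUU(t) + \underline\UUU(t)).
\]
Taking traces gives $(\tr\DDD)'(t) = -2\tr(\MMM(t)\DDD(t))$. A spectral decomposition of the positive semidefinite matrix $\DDD(t) = \sum_i \mu_i\, e_ie_i^*$ shows that whenever $\MMM(t) \succcurlyeq \alpha\III$ one has $\tr(\MMM\DDD) = \sum_i \mu_i \langle e_i, \MMM e_i\rangle \geq \alpha \tr\DDD$. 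So we only need a lower bound on $\MMM(t)$.

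Two such bounds are available. By Lemma~\ref{l.lowerbound}, both $\underline\UUU(t)$ and $\overline\UUU(t)$ remain $\succcurlyeq 0$ for $t \geq 0$, and hence $\MMM(t) \succcurlyeq 0$ everywhere. Moreover, applying Proposition~\ref{p.unifconv} with $\eps = \eta/2$ (and the constant $\tau_0 = \tau_0(\eta/2)$ fixed just before the lemma), for all $t \geq \tau_0$ we have $\underline\UUU(t), \overline\UUU(t) \succcurlyeq \UUU^u_v(t) - \tfrac{\eta}{2}\III$, and since $\UUU^u_v(t) \succcurlyeq \lambda^u(f_tv)\III \succcurlyeq \lambda(f_tv)\III$ by definition of $\lambda$, we conclude $\MMM(t) \succcurlyeq (\lambda(f_tv) - \tfrac{\eta}{2})\III$. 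Combined with $\MMM(t) \succcurlyeq 0$, this gives $\MMM(t) \succcurlyeq \tl(f_tv)\III$ on $[\tau_0,\infty)$. Hence
\[
(\tr\DDD)'(t) \leq 0 \text{ on } [0,\tau_0], \qquad (\tr\DDD)'(t) \leq -2\tl(f_tv)\tr\DDD(t) \text{ on } [\tau_0,t].
\]

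Integrating via Gronwall (or by differentiating $\log\tr\DDD$ where it is positive), we obtain $\tr\DDD(t) \leq \tr\DDD(0)$ for $t \leq \tau_0$ and $\tr\DDD(t) \leq (\tr\overline\UUU - \tr\underline\UUU)\exp(-2\int_{\tau_0}^t \tl(f_\tau v)\,d\tau)$ for $t \geq \tau_0$. To match the form of the claimed bound, we absorb the discrepancy between $-2\int_{\tau_0}^t \tl$ and $-\int_0^t \tl$ using the elementary estimate $\int_0^{\tau_0} \tl(f_\tau v)\,d\tau \leq \tau_0 \|\lambda\|$ (since $0 \leq \tl \leq \lambda$): in the range $t \leq \tau_0$ we use it to show $1 \leq e^{\tau_0\|\lambda\|}e^{-\int_0^t\tl}$, and in the range $t \geq \tau_0$ we use it to show $-2\int_{\tau_0}^t \tl \leq \tau_0\|\lambda\| - \int_0^t \tl$. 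Either case yields \eqref{eqn:rhoDelta}. The only real obstacle is the step requiring Proposition~\ref{p.unifconv} to convert the hypothesis $\underline\UUU, \overline\UUU \in D$ into a concrete lower bound on $\MMM(t)$ in terms of $\lambda$; everything else is standard Riccati comparison and Gronwall manipulation.
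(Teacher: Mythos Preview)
Your proof is correct and follows essentially the same approach as the paper: propagate the ordering via Lemma~\ref{l.monotone}, bound $\rho(\UUU_0(t),\UUU_1(t))$ by the trace gap $\Delta(t)=\tr\overline\UUU(t)-\tr\underline\UUU(t)$, show $\Delta'\leq 0$ on $[0,\tau_0]$ and $\Delta'\leq -2\tl(f_tv)\Delta$ on $[\tau_0,t]$ using Proposition~\ref{p.unifconv}, then integrate. The only cosmetic difference is that you compute $\Delta'$ via the difference equation $\DDD'+\MMM\DDD+\DDD\MMM=0$ and the inequality $\tr(\MMM\DDD)\geq\alpha\,\tr\DDD$ when $\MMM\succcurlyeq\alpha\III$, whereas the paper expands $\Delta'=-\sum_i(\overl_i^2-\underl_i^2)$ in eigenvalues and invokes Weyl's inequality; both routes yield the same differential inequality.
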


\begin{proof}
By Lemma \ref{l.monotone}, we have $\underline\UUU(t) \preccurlyeq \UUU_j(t) \preccurlyeq \overline\UUU(t)$ for all $t\in \RR$ and $j=0,1$.
Positive semi-definiteness of $\UUU_j(t) - \underline\UUU(t)$ gives $\tr\UUU_j(t) = \tr\underline\UUU(t) + \tr(\UUU_j(t) - \underline\UUU(t)) \geq \tr\underline\UUU(t)$.  Similarly, $\tr\UUU_j(t) \leq \tr\overline\UUU(t)$, and so
\[
\rho(\UUU_0(t),\UUU_1(t))
= |\tr\UUU_0(t) - \tr\UUU_1(t)|
\leq \tr\overline\UUU(t) - \tr\underline\UUU(t)
=: \Delta(t).
\]
Writing $\underl_1(t) \leq \underl_2(t) \leq \cdots \leq \underl_{n-1}(t)$ for the eigenvalues of $\underline\UUU(t)$, and similarly for the eigenvalues of $\overline\UUU(t)$, we have
\begin{align*}
\Delta'(t) &= \tr (\overline\UUU'(t) - \underline\UUU'(t))
= \tr (\underline\UUU(t)^2 - \overline\UUU(t)^2) \\
&= -\big(\tr(\overline\UUU(t)^2) - \tr(\underline\UUU(t)^2)\big) \\
&= -\sum_{i=1}^{n-1} (\overl_i(t)^2 - \underl_i(t)^2)
= -\sum_{i=1}^{n-1} (\overl_i(t) - \underl_i(t))(\overl_i(t)+\underl_i(t)).
\end{align*}
Weyl's Monotonicity Theorem \cite[Corollary III.2.3]{rB97} states that the positive semi-definiteness of $\overline\UUU(t) - \underline\UUU(t)$ gives $\overl_i(t) \geq \underl_i(t) \geq 0$, so $\Delta'(t) \leq 0$ for all $t\geq 0$.  Moreover, since $\underline\UUU(0)\in D$, Proposition \ref{p.unifconv} gives $\underl_i(t) \geq \lambda(f_t v) - \frac\eta2$ for all $t\geq \tau_0$, and thus $\underl_i(t) \geq  \tl(f_tv)$.  Thus, for $t\geq \tau_0$, we have
\[
\Delta'(t)
\leq - \sum_{i=1}^{n-1} 2\tl(f_tv)(\overl_i(t) - \underl_i(t))
= - 2\tl(f_tv) \Delta(t),
\]
and so 
\[
\begin{aligned}
\rho(\UUU_0(t),\UUU_1(t))
&\leq
\Delta(\tau_0) e^{-\int_{\tau_0}^t 2\tl(f_\tau v) \,d\tau} \\
&\leq\Delta(0) e^{\int_0^{\tau_0}  \tl(f_\tau v)\,d\tau} e^{-\int_0^t \tl(f_\tau v)\,d\tau} 
\\
&\leq (\tr\overline\UUU - \tr\underline\UUU)e^{\tau_0\|\lambda\|} e^{-\int_0^t \tl(f_\tau v)\,d\tau}.\qedhere
\end{aligned} 
\]
\end{proof}

We now apply the estimate \eqref{eqn:rhoDelta} locally on the interior of $D$, and show how to use this to obtain the global estimate \eqref{eqn:Rcontract}. 
First assume that $\UUU_0,\UUU_1$ are positive definite, and let $\eps>0$ be such that $\UUU_0,\UUU_1 \succcurlyeq \eps\III$ and $n = \|\UUU_0 - \UUU_1\|/\eps$ is an integer.
 Given $q\in (0,1)$, let $\UUU_q = (1-q)\UUU_0 + q\UUU_1$  and observe that $\UUU_q \succcurlyeq \eps\III$.  For every $0\leq k < n$, we have
$
\|\UUU_{(k+1)/n} - \UUU_{k/n}\| < \eps.
$

Now let $\underline\UUU_k = \UUU_{k/n} - \eps\III$ and $\overline\UUU_k = \UUU_{k/n} + \eps\III$. For $j=  k/n, (k+1)/n$, we have $\underline \UUU_k \preccurlyeq  \UUU_{j} \preccurlyeq \overline\UUU_k$ , so writing
$\UUU_q(t) = \RRR_{0,t}^v(\UUU_q)$ for $q\in [0,1]$, and applying Lemma \ref{l.local} 
gives
\[
\rho(\UUU_{k/n}(t), \UUU_{(k+1)/n}(t)) \leq
e^{\tau_0\|\lambda\|} e^{-\int_0^t \tl(f_\tau v)\,d\tau} 2\eps.
\]
Summing over all $k$, and using the fact that $n\eps=\|\UUU_0 - \UUU_1\|$, gives
\[
\rho(\UUU_0,\UUU_1) \leq 2e^{\tau_0\|\lambda\|}  e^{-\int_0^t\tl(f_\tau v)\,d\tau} \|\UUU_0 - \UUU_1\|.
\]
This proves \eqref{eqn:Rcontract} when $\UUU_0,\UUU_1$ are positive definite.  For the positive semidefinite case, replace $\UUU_j$ with $\UUU_j^\delta := \UUU_j + \delta\III$ for $\delta>0$, and observe that
$\lim_{\delta\to 0} \RRR_{0,t}^v \UUU_j^\delta = \RRR_{0,t}^v \UUU_j$. This completes the proof of Lemma \ref{lem:Rcontract}.

\subsection{Proof of Lemma \ref{lem:rho0st}} \label{s.rho0st}

Fixing $v,w\in T^1M$ and $t\geq 0$, let $R\colon [0,t]\to D$ be as in \eqref{eqn:rs}, and define $G\colon [0,t] \times [0,t] \to [0,\infty)$ by
\begin{equation}\label{eqn:Gs}
G(s',s'') = Q \|\KKK(f_{s'} v) - \KKK(f_{s'} w)\| \exp\left(-\int_{s''}^t \tl(f_\tau v)\,d\tau\right) .
\end{equation}
Note that $G(s,s)$ is the integrand on the right hand side of the estimate \eqref{eqn:abscts} that we wish to prove.  

We need a uniform continuity property of the map $(v,s,\UUU) \mapsto \|\RRR_{0,s}^v\UUU\|$.

\begin{lem}\label{lem:delta-s-small}
For every $\eps > 0$ there is $\delta > 0$ such that for any given $v,w \in T^1M$, $\UUU \in D$, and $s_0 \in [0,\delta]$, we have $\|\RRR^v_{0,s_0}\UUU - \RRR^w_{0,s_0}\UUU\| \leq (\|\KKK(v) - \KKK(w)\| + \epsilon)s_0$.
\end{lem}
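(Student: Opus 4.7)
The plan is to linearize the difference $\Delta(s) := \RRR_{0,s}^v\UUU - \RRR_{0,s}^w\UUU$ via the ODE it satisfies and close with Gronwall's inequality. By Lemma \ref{lem:invdom}, both curves $\UUU_v(s) := \RRR_{0,s}^v \UUU$ and $\UUU_w(s) := \RRR_{0,s}^w \UUU$ stay in the compact forward-invariant set $D$ for all $s \geq 0$, so their operator norms are uniformly bounded by $b$. Subtracting the Riccati equations \eqref{e.ricc-3} along $\gamma_v$ and $\gamma_w$ with common initial condition $\UUU_v(0)=\UUU_w(0)=\UUU$ yields $\Delta(0) = 0$ and
\[
\Delta'(s) = -\Delta(s)\UUU_v(s) - \UUU_w(s)\Delta(s) - \bigl(\KKK(f_sv) - \KKK(f_sw)\bigr),
\]
whence $\|\Delta'(s)\| \leq 2b\,\|\Delta(s)\| + \|\KKK(f_sv) - \KKK(f_sw)\|$.

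Next, I would exploit uniform continuity of the map $(s,v)\mapsto \KKK(f_sv)$ on the compact set $[-1,1]\times T^1M$: for any $\eps_1 > 0$ there is $\delta_1 > 0$ such that $\|\KKK(f_sv) - \KKK(v)\| \leq \eps_1/2$ for every $v \in T^1M$ and every $|s| \leq \delta_1$. The triangle inequality then gives $\|\KKK(f_sv) - \KKK(f_sw)\| \leq \|\KKK(v) - \KKK(w)\| + \eps_1$ for all $v,w$ and $s \in [0,\delta_1]$. Writing $\phi(s) := \|\Delta(s)\|$, integrating the differential inequality above produces
\[
\phi(s) \leq 2b\int_0^s \phi(u)\,du + (\|\KKK(v) - \KKK(w)\| + \eps_1)\,s
\]
for $s \in [0,\delta_1]$, so Gronwall's inequality delivers
\[
\phi(s) \leq (\|\KKK(v) - \KKK(w)\| + \eps_1)\,s\,e^{2bs}.
\]

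Finally, since $\|\KKK(v) - \KKK(w)\|$ is bounded uniformly by $2M_K := 2\sup_u\|\KKK(u)\| < \infty$ (by continuity of $\KKK$ on the compact $T^1M$), I would first pick $\eps_1$ small and then shrink $\delta \in (0,\delta_1]$ so that $e^{2b\delta}(\|\KKK(v) - \KKK(w)\| + \eps_1) \leq \|\KKK(v) - \KKK(w)\| + \eps$ holds uniformly in $v,w$ for $s \leq \delta$; this is possible because the excess $e^{2b\delta}\|\KKK(v) - \KKK(w)\| - \|\KKK(v) - \KKK(w)\| \leq 2M_K(e^{2b\delta}-1)$ tends to $0$ as $\delta \to 0$, and the remaining $\eps_1 e^{2b\delta}$ term is controlled by choosing $\eps_1$ small. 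The only real obstacle is this uniform absorption of the Gronwall factor $e^{2bs}$ and the auxiliary error $\eps_1$ into a single $\eps$; the uniform bound on $\|\KKK\|$ provided by compactness of $T^1M$ makes it routine.
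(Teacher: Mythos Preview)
Your proof is correct and follows essentially the same approach as the paper: both subtract the Riccati equations, use forward invariance of $D$ to bound the quadratic term by $2b\|\Delta(s)\|$, and use uniform continuity of $\KKK\circ f_s$ to replace $\|\KKK(f_sv)-\KKK(f_sw)\|$ by $\|\KKK(v)-\KKK(w)\|+\eps_1$ for small $s$. The only cosmetic difference is that you close with Gronwall and then absorb the factor $e^{2bs}$, whereas the paper first extracts a crude linear bound $\|\Delta(s)\|\leq Ls$ (from $\|\Delta'\|\leq 4b^2+2\|\KKK\|=:L$), feeds this back to get $\|\Delta'(s)\|\leq 2bLs+\|\KKK(f_sv)-\KKK(f_sw)\|$, and integrates directly---avoiding the exponential factor but arriving at the same estimate.
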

\begin{proof}
When $s = 0$ we have $\RRR^v_{0,0}\UUU = \UUU = \RRR^w_{0,0}\UUU$. Let $F(v,s,\UUU) = \RRR^v_{0,s}\UUU$. Then
  $$
  \RRR^v_{0,s_0}\UUU - \RRR^w_{0,s_0}\UUU = \int_0^{s_0} (  \partial F/\partial s(v,s,\UUU) - \partial F/\partial s(v,s,\UUU))\,ds.
 $$

The Riccati equation tells us that  $\partial F/ \partial s(v,0,\UUU) = -\KKK(v) - \UUU^2$ for any  $(v,\UUU) \in T^1M \times D$. Hence 
$$
\partial F/ \partial s(v,0,\UUU) - \partial F/ \partial s(w,0,\UUU) = \KKK(w) - \KKK(v).
$$
For any small enough $\Delta > 0$, the function $F$ is well defined and $C^\infty$ on the compact space $T^1M \times [0,\Delta] \times D$. Hence $\partial F/\partial s$ is uniformly continuous on this space. In particular, given $\epsilon > 0$, there is $\delta \in (0, \Delta)$ such that
 $$
\| \partial F/\partial s(v,0,\UUU) - \partial F/\partial s(v,s,\UUU) \| < \epsilon/2
  $$
for all $(v,\UUU) \in T^1M \times D$ and all $s \in [0,\delta]$. It follows that if $0 \leq s \leq \delta$, then
  \[
  \| \partial F/\partial s(v,s,\UUU) -  \partial F/\partial s(w,s,\UUU)\| \leq \|\KKK(v) - \KKK(w)\| + \epsilon. \qedhere
 \]
\end{proof}

Now fix $\eps>0$, let $Q$ be the constant from Lemma \ref{lem:Rcontract}, and choose $\delta > 0$ so that Lemma \ref{lem:delta-s-small} holds with $\eps/Q$ in place of $\eps$. Fix $v,w\in T^1M$ and $\UUU_0\in D$. Suppose  $0 \leq s' \leq s'' \leq s' + \delta$. Noting that $\RRR_{0,s''}^w\UUU_0=\RRR_{s',s''}^w\RRR_{0,s'}^w\UUU_0 $, Lemma~\ref{lem:delta-s-small} gives us
\begin{equation} \label{e:from7.19}
\|\RRR_{s',s''}^v\RRR_{0,s'}^w\UUU_0 - \RRR_{0,s''}^w\UUU_0\| \leq (\|\KKK(f_{s'} v) - \KKK(f_{s'} w)\| + \eps/Q)(s'' - s').
\end{equation}
If $s'' \leq t$, then letting $R=R^{v,w,t}_{\UUU_0}$, we have
\begin{align*}
\rho(R(s'),R(s''))& = \rho(\RRR^v_{s',t}\RRR^w_{0, s'} \UUU_0, \RRR^v_{s'',t}\RRR^w_{0, s'} \UUU_0)\\ 
&= \rho(\RRR^v_{s'',t}(\RRR_{s',s''}^v\RRR_{0,s'}^w\UUU_0), \RRR^v_{s'',t}(\RRR^w_{0, s''} \UUU_0)),
\end{align*}
and so applying Lemma \ref{lem:Rcontract} and the estimate \eqref{e:from7.19} gives
\begin{align*}
\rho(R(s'),R(s'')) &\leq Q  e^{-\int_{s''}^t \tl(f_\tau v)\,d\tau} \|\RRR_{s',s''}^v\RRR_{0,s'}^w\UUU_0 - \RRR_{0,s''}^w\UUU_0\| \\
& \leq (G(s',s'') + \eps)(s''-s'),
\end{align*}
where $G$ is as in \eqref{eqn:Gs}.

Now given $0\leq s_1\leq s_2\leq t$ as in the statement of Lemma \ref{lem:rho0st}, let $n$ be large enough that $(s_2-s_1)/n \leq \delta$, and put $s_i^* = s_1 + \frac in (s_2-s_1)$ for $0\leq i\leq n$; we obtain
\[
\rho(R(s_1),R(s_2)) \leq \sum_{i=1}^n \rho(s_{i-1}^*,s_i^*) \leq \sum_{i=1}^n (G(s_{i-1}^*,s_i^*) + \eps) \Big( \frac{s_2-s_1}n\Big).
\]
As $n\to\infty$, the sum on the right converges to $\int_{s_1}^{s_2} (G(s,s) + \eps)\,ds$ since $G$ is continuous.  Since $\eps>0$ was arbitrary, this proves \eqref{eqn:abscts}.


\section{Pressure gap} \label{s.entropygap}
In this section, we prove Theorem \ref{thm:pressure-gap} which states that $P(\Sing,\ph) < P(\ph)$ when $\ph$ is locally constant in a neighborhood of $\Sing$.  We give an outline of the argument in \S\ref{sec:gap-outline}, and the details in \S\S\ref{s:singreg}--\ref{s.pressureproduction}.

\subsection{Outline of proof}\label{sec:gap-outline}
The first step in the proof of Theorem \ref{thm:pressure-gap} is to approximate orbit segments $(v, t)$ in the singular set by regular orbit segments in $\CCC(\eta)$; this is \S\ref{s:singreg}.  The second step is to establish a partition sum estimate for the regular orbit segments in $\CCC(\eta)$ that approximate $\Sing$; this is \S\ref{sec:pressure estimates}.  The third and final step is to use the specification property for $\CCC(\eta)$ to construct a collection of orbits with greater topological pressure than the singular set; 
this is \S\ref{s.pressureproduction}.  Here we briefly outline each step.

\emph{Step one: Approximation.} In \S\ref{s:singreg}, the approximation map $\Pi_t \colon \Sing\to \Reg$ is defined. Theorem \ref{thm:sing-to-reg} gives its most important properties.  The idea is to move the beginning of the orbit segment from $v$ to an appropriate $v' \in W^s(v) \cap \Reg(\eta)$, and then move the end of the orbit segment from $f_t(v')$ to an appropriate $f_t(w) \in W^u(f_t v') \cap \Reg(\eta)$, obtaining $\Pi_t(v) := w$ with the property that $(w, t) \in \CCC(\eta)$.

Naively, one might hope that we could construct $w=\Pi_t(v)$ so that $d(f_sv, f_sw)$ is small for most $s\in [0,t]$.  One can see that this is too much to ask by considering the example of $v\in \Sing$ whose orbit stays in the middle of a flat strip. Instead, the best approximation property we can get is that $f_sw$ will be close to \emph{some} orbit in $\Sing$ along the middle of the orbit, i.e., for all $s \in [L, t-L]$, where $L$ is independent of $v$ and $t$.  For example, if $v$ stays in the middle of a flat strip on a surface, then $\Pi_t(v)$ will approach the \emph{edge} of the flat strip. This is illustrated in Figure \ref{fig:regularizing}. This is the reason we assume that $\ph$ is locally constant on a neighborhood of $\Sing$; this condition guarantees that $|\int_0^t \ph(f_s v)\,ds - \int_0^t \ph(f_s w)\,ds|$ is uniformly bounded even through $f_sv$ and $f_sw$ may be far apart.


\begin{figure}[htbp]
\includegraphics[width=\textwidth]{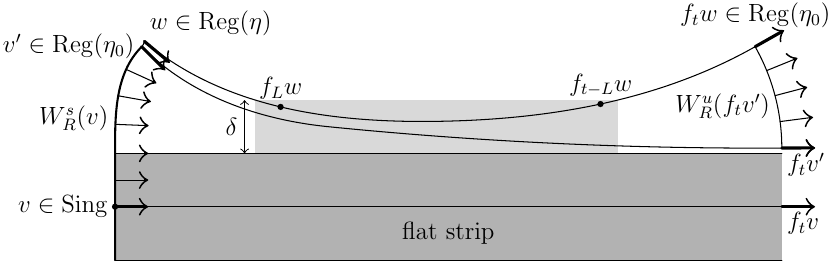}
\caption{The approximation map $\Pi_t\colon \Sing\to \Reg$.}
\label{fig:regularizing}
\end{figure}

\emph{Step two: Partition sum estimates.} In \S\ref{sec:pressure estimates}, we obtain a good lower bound for the partition sums associated to $\Pi_t(\Sing)  \subset \CCC(\eta)_t$. The crucial step is to control the multiplicity of the map $\Pi_t$, which is carried out in Proposition \ref{prop:multiplicity}.  This allow us to produce a $(t, \epsilon)$-separated set $E_t\subset \Sing$ such that $\Pi_t(E_t)$ contains a $(t,\delta)$-separated set $E_t''$ satisfying (Lemma \ref{lem:En''})
\begin{equation}\label{eqn:En''0}
\sum_{w\in E_t''} e^{\inf_{u\in B_n(w,\delta)} \int_0^t \ph(f_s u)\,ds} \geq \beta e^{tP(\Sing,\ph)}.
\end{equation}
Moreover, each $w\in E_t''$ has the property that $f_s w$ is close to $\Sing$ when $s\in [L,t-L]$, and not close to $\Sing$ when $s=0,t$.



\emph{Step three: Producing pressure.} In \S\ref{s.pressureproduction}, we 
construct a collection of orbit segments which guarantees the pressure gap 
by using the specification property for $\CCC(\eta)$ together with the partition sum estimate \eqref{eqn:En''0}.  Given a suitable $\tau>0$, $N\in \mathbb{N}$ large and $\alpha>0$ small such that $\alpha N\in \mathbb{N}$, we formally split the time interval $[0,\tau N]$ into $\alpha N$ subintervals by choosing a subset $J$ of $\alpha N -1$ elements from the set $\{\tau, 2 \tau \ldots, \tau(N-1)\}$ to determine the endpoints of the subintervals.  We call the elements of $J$ `gluing times' because we will use the specification property to glue together orbit segments which start at these times.  The constant $\tau$ is chosen so that any two gluing times are far enough apart for our construction. We write $n_1,\dots, n_{\alpha N}$ for the lengths of the subintervals selected by $J$, and we use the specification property with transition time $T$ to glue together orbit segments in $\CCC(\eta)$ with lengths $n_i-T$: if $(v_1,n_1-T),\dots, (v_k,n_{\alpha N}-T) \in \CCC(\eta)$, Theorem \ref{specgeoflow} gives $w\in T^1M$ so that $(w, N)$ shadows each of the $(v_i,n_i-T)$ in turn with transition time $T$. In this way, for each $v_1\in E''_{n_{1}-T},\dots, v_{n_{\alpha N}} \in E''_{n_{\alpha N}-T}$ we obtain an orbit segment $(w,\tau N)$ which shadows each $(v_i,n_i-T)$ in turn.  The set $J$ of  gluing times can be recovered from $(w, \tau N)$ as the set of times $s \in [0, \tau N]$ for which $f_sw$ is not close to $\Sing$. Thus, our construction guarantees different orbit segments $(w, \tau N)$ for different choices of $J$.
%


We sum over all $\binom {N-1}{\alpha N-1} \approx e^{(-\alpha \log \alpha) N}$ choices of gluing times and  using \eqref{eqn:En''0} we obtain an $(N,\delta/3)$-separated set $F_N$ for which 
\[
\sum_{w\in F_N} e^{\int_0^{\tau N} \ph(f_t w)\,dt} \gtrapprox e^{-(\alpha \log \alpha) N} e^{-Q \alpha N} e^{N \tau P(\Sing,\ph)},
\]
where the constant $Q$ is independent of $\alpha$.  This gives
\[
P(\ph) \geq \alpha \tau^{-1}  (-\log \alpha - Q) + P(\Sing,\ph).
\]
The first term is positive if $\alpha >0$ is small, which gives the pressure gap that we want and proves Theorem \ref{thm:pressure-gap}. 
Intuitively, $\alpha$ must be chosen small because otherwise the error in the Birkhoff integral introduced by the specification property at each gluing time dominates the growth in complexity from the different itineraries available.

\subsection{Replacing singular orbit segments with regular ones} \label{s:singreg}

Fix $\eta_0>0$ small enough that $\Reg(\eta_0)$ has nonempty interior.  By Lemma \ref{lem:unif-dense}, there exists $R>0$ such that for every $v\in T^1M$ we have both $W_R^s(v) \cap \Reg(\eta_0)\neq\emptyset$ and $W_R^u(v)\cap \Reg(\eta_0)\neq\emptyset$.  In particular,
we can define maps $\Pi^s, \Pi^u \colon T^1 M \to \Reg(\eta_0)$ such that  $\Pi^\sigma(v)\in W_R^\sigma(v)$ for every $v\in T^1 M$ and $\sigma = s,u$.  
Given $t>0$, we use these to define a map $\Pi_t \colon \Sing\to \Reg$ by
\begin{equation}\label{eqn:Pit}
\Pi_t = f_{-t} \circ \Pi^u \circ f_t \circ \Pi^s.
\end{equation}
That is, given $v\in \Sing$ we choose $v' = \Pi^s(v) \in W_R^s(v)$ with $\lambda(v') \geq \eta_0$, and $w = f_{-t}(\Pi^u(f_t v'))$ such that $f_t w \in W_R^u(f_t v')$ and $\lambda(f_t w) \geq \eta_0$, as shown in Figure \ref{fig:regularizing}.


\begin{thm}\label{thm:sing-to-reg}
For every $\delta>0$ and $\eta \in (0,\eta_0)$, there exists $L>0$ such that for every $v\in \Sing$ and $t\geq 2L$, the image $w = \Pi_t(v)$ has the following properties:
\begin{enumerate}[label=\textup{(\arabic{*})}]
\item\label{wfw}
 $w, f_t(w) \in \Reg(\eta)$;
\item\label{nearSing}
$\dK(f_s(w),\Sing) < \delta$ for all $s\in [L,t-L]$;
\item \label{component}
for every $s\in [L,t-L]$, $f_s(w)$ and $v$ lie in the same connected component of $B(\Sing,\delta) := \{w\in T^1M : \dK(w,\Sing)<\delta)\}$.
\end{enumerate}
\end{thm}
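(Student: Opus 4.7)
The map $\Pi_n$ chains a stable holonomy $v \mapsto v' := \Pi^s(v) \in W_R^s(v) \cap \Reg(\eta_0)$ with an unstable one $f_n v' \mapsto f_n w := \Pi^u(f_n v') \in W_R^u(f_n v') \cap \Reg(\eta_0)$, so that $w \in W^u(v')$ and $v \in W^s(v')$.  The orbit $\gamma_w$ is thus product-anchored to the singular orbit $\gamma_v$ through $v'$, while both endpoints $w$ and $f_n w$ sit in the uniformly hyperbolic set $\Reg(\eta_0)$.  The plan is to exploit the hyperbolicity at $v'$ and $f_n w$ (via Proposition \ref{prop:weak-expansivity} and Lemma \ref{hyplem}) to show that $\gamma_w$ is regular at the endpoints but closely shadows $\gamma_v$ in the interior of $[0,n]$.

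For \ref{wfw}, $f_n w \in \Reg(\eta_0) \subset \Reg(\eta)$ is immediate from the definition of $\Pi^u$.  To see $w \in \Reg(\eta)$, I apply Proposition \ref{prop:weak-expansivity} at $v' \in \Reg(\eta_0)$: since $f_n(w) \in W_R^u(f_n(v'))$, there is $T_0 = T_0(R,\epsilon,\eta_0)$ such that $n \geq T_0$ forces $w \in W_\epsilon^u(v')$, and uniform continuity of $\lambda$ then gives $\lambda(w) \geq \lambda(v') - o_\epsilon(1) > \eta$ for $\epsilon$ small.  For \ref{nearSing} and \ref{component} the key decomposition is
\[
\dK(f_t w, \Sing) \leq \dK(f_t w, f_t v') + \dK(f_t v', f_t v),
\]
valid since $f_t v \in \Sing$.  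The second term is controlled by the stable form of Proposition \ref{prop:weak-expansivity} applied at $v' \in \Reg(\eta_0)$ to its stable-leaf neighbor $v$: uniformly in $v \in \Sing$, there is $T_1 = T_1(\delta)$ such that $d^s(f_t v, f_t v') < \delta/4$ for $t \geq T_1$.  The first term requires a symmetric contraction estimate at the other endpoint: using $f_n w \in \Reg(\eta_0)$ and $f_n v' \in W_R^u(f_n w)$, I argue that for $t \leq n - T_2(\delta)$, $d^u(f_t w, f_t v') < \delta/(4 e^\Lambda)$, so that $\dK(f_t w, f_t v') \leq e^\Lambda d^u(f_t w, f_t v') < \delta/4$ by \eqref{eqn:dKdu}.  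Combining and setting $T = \max(T_0,T_1,T_2)$ yields \ref{nearSing}.  For \ref{component}, I build a piecewise path from $f_t w$ to $v$: first along $W^u(f_t v')$ to $f_t v'$, then along $W^s(f_t v)$ to $f_t v$, then backward along $\gamma_v$ to $v$; the bounds above show each segment lies inside $B(\Sing,\delta)$, while the final flow segment lies entirely in $\Sing$.

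The main obstacle is the symmetric bound on $d^u(f_t w, f_t v')$ for $t$ in the middle of the window.  Since Proposition \ref{prop:weak-expansivity} is stated in forward time and $\|J^u\|$ is non-decreasing under the flow, its naive application at $f_n w$ does not give $d^u$ small at interior times.  The remedy is to use that $\lambda$ is invariant under the involution $v \mapsto -v$ which exchanges $W^u$ with $W^s$ and conjugates $f_t$ to $f_{-t}$: applying Lemma \ref{hyplem} at $-f_n w \in \Reg(\eta_0)$ in the reversed flow produces backward contraction along $W^u$ in the original flow, quantified by the derivative lower bound $\|J^u\|'(n) \geq \eta_0 \|J^u(n)\|$ together with convexity of $\|J^u\|$ and the initial bound $\|J^u(0)\| \leq R/(1 + \eta_0 n)$ from \ref{wfw}.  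Uniformity over $v \in \Sing$ and over the (non-canonical) choices of $\Pi^s$ and $\Pi^u$ follows from the uniform constants in Proposition \ref{prop:weak-expansivity} and Lemma \ref{hyplem}.
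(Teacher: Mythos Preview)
Your argument for item \ref{wfw} is fine and agrees with the paper's (which packages the same step as Corollary~\ref{cor:lambda-close}).  The gap is in items \ref{nearSing} and \ref{component}: your plan is to show that $f_t w$ stays close to the \emph{specific} singular orbit $\gamma_v$ via the decomposition
\[
\dK(f_t w, f_t v) \leq \dK(f_t w, f_t v') + \dK(f_t v', f_t v),
\]
but neither summand can be made small for $t$ in the middle of $[0,n]$.  Proposition~\ref{prop:weak-expansivity} (stable form) requires $\Reg(\eta)$ at the \emph{later} time to produce contraction over the preceding interval; knowing only $v'\in\Reg(\eta_0)$ at time $0$ gives backward growth of $\|J^s\|$, not forward decay.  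Concretely, if $v$ lies in the interior of a flat strip and $v'\in W_R^s(v)\cap\Reg(\eta_0)$ sits just outside the strip, then $d^s(f_t v, f_t v')$ is non-increasing but tends to a positive limit (roughly the distance from $v$ to the strip's edge), so it never drops below $\delta/4$ for small $\delta$.  The same obstruction applies to $d^u(f_t w, f_t v')$: convexity of $\|J^u\|$ combined with $\|J^u(0)\|\leq R/(1+\eta n)$ and $\|J^u(n)\|\leq R$ only gives $\|J^u(t)\|\lesssim (t/n)R$, which is of order $R$ for $t$ near $n/2$; the endpoint derivative bound $\|J^u\|'(n)\geq \eta_0\|J^u(n)\|$ does not improve this.

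The paper avoids this obstruction by \emph{not} attempting to shadow $\gamma_v$.  Instead it tracks $\lambda^u$ along the chain $v\to v'\to w$: since $\lambda^u(f_t v)=0$, Corollary~\ref{cor:lambda-close} gives $\lambda^u(f_t v')<\eta_1/2$ for $t\geq T_2$, and a second application gives $\lambda^u(f_t w)<\eta_1$ for $t\in[T_2,n-T_2]$.  Then Proposition~\ref{prop:proximity} converts ``$\lambda^u$ small on a long window'' into ``close to $\Sing$'', yielding \ref{nearSing}.  The paper remarks explicitly just after the theorem that one \emph{cannot} conclude $f_t w$ is close to $f_t v$---only to some singular vector---which is exactly the flat-strip phenomenon above.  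For \ref{component} the same $\lambda^u$-argument applies to every point on the path $v\to v'\to w$, so the whole path (and its image under $f_t$) lies in $B(\Sing,\delta)$.
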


We emphasize that Theorem \ref{thm:sing-to-reg} does \emph{not} allow us to conclude that $f_s(w)$ is close to $f_s(v)$;
 all we know is that $f_s(w)$ is close to \emph{some} singular vector for $s\in [L,t-L]$. For example, if $f_s(v)$ is in the middle of a flat strip on a surface, then $f_s(w)$ will be close to the edge of the flat strip for $t\in [L,t-L]$.


\begin{proof}[Proof of Theorem \ref{thm:sing-to-reg}]
Let $\delta,\eta,\eta_0$ be as in the statement of the theorem. For property \ref{wfw}, it is immediate from the definition of $\Pi_t$ that $\lambda(f_tw) \geq \eta$.  By uniform continuity of $\lambda$, we can take $\eps_0$ sufficiently small such that if $v_2 \in W_{\eps_0}^u(v_1)$ and $\lambda(v_1) \geq \eta_0$, then $\lambda(v_2) \geq \eta$.  By Corollary \ref{cor:weak-expansivity}, there exists $T_0>0$ such that if $t \geq T_0$ and $f_{t}(w) \in W_R^u(f_{t}v')$, then $w\in W_{\eps_0}^u(v')$. Thus, if $\lambda(v')\geq \eta_0$, then $\lambda(w) \geq \eta$. Thus, item \ref{wfw} of the theorem holds for any $t\geq T_0$.

We turn our attention to item \ref{nearSing}.
 By Proposition \ref{prop:proximity}, there are $\eta',T_1>0$ such that
\begin{equation}\label{eqn:T1}
\text{if }\lambda^u(f_s v)\leq \eta' \text{ for all } |s|\leq T_1, \text{ then } \dK(v,\Sing) < \delta.
\end{equation}
Given $v\in \Sing$, we have $\Pi^s(v) = v' \in W_R^s(v)$, and $\lambda(f_s v) = 0$ for all $s$.

Again by uniform continuity of $\lambda^u$, we can take $\eps_1$ sufficiently small such that if $v_2 \in W_{\eps_1}^s(v_1)$ and $\lambda^u(v_1) <\eta'/3$, then $\lambda^u(v_2) \geq \eta'/2$. By Corollary \ref{cor:weak-expansivity}, there is $T_2>0$ such that for all $t\geq T_2$ and $v \in W_R^s(v')$ we have $f_t v \in W_{\epsilon_1}(f_t v')$. Thus if  $\lambda(f_tv) < \eta'/3$, then $\lambda(f_tv') < \eta'/2$. Thus, since $\lambda^u(v)=0$, the inclusion $v \in W_R^s(v')$ implies that $\lambda^u(f_t v') < \eta'/2$ for all $t\geq T_2$.

 

Using uniform continuity of $\lambda^u$ one more time, we can take $\eps_2$ sufficiently small such that if $v_2 \in W_{\eps_2}^u(v_1)$ and $\lambda^u(v_2) < \eta'/2$, then $\lambda^u(v_1) < \eta'$.  
By Corollary \ref{cor:weak-expansivity}, there is $T_3>0$ such that if $t\geq T_3$ and $f_t v_2 \in W_R^u(f_t v_1)$, then $v_2 \in W_{\eps_2}^u(v_1)$.  Thus, if $t\geq T_3$, $f_tv_2 \in W_R^u(f_t v_1)$ and $\lambda^u(v_2) < \eta'/2$, then $\lambda^u(v_1) < \eta'$. Thus, for $s\in [T_2, t - T_3]$, the inclusion $f_{t-s}(f_s w) = f_t w \in W_R^u(f_t v') = W_R^u(f_{t-s}(f_s v'))$ implies that $\lambda^u(f_s w) < \eta'$.

Applying \eqref{eqn:T1} gives $\dK(f_sw,\Sing) < \delta$ for all $s\in [T_2 + T_1, t-T_3 - T_1]$. Thus, taking $L = \max(T_0, T_1 + \max(T_2,T_3))$, assertions \ref{wfw} and \ref{nearSing} follow for $t\geq 2L$.

For item \ref{component} of the theorem, we observe that $v$ and $w$ can be connected by a path $u(r)$ that follows first $W_R^s(v)$, then $f_{-t}(W_R^u(f_tv'))$ (see Figure \ref{fig:regularizing}), and that the arguments giving $\dK(f_sw,\Sing)<\delta$ also give $\dK(f_su(r),\Sing)<\delta$ for every $s\in [L,t-L]$ and every $r$.  We conclude that $f_sv$ and $f_sw$ lie in the same connected component of $B(\Sing,\delta)$ for every such $s$.  
\end{proof}

\subsection{Multiplicity of $\Pi_t$ and pressure estimates} \label{sec:pressure estimates}
We now bound the number of $v\in \Sing$ whose images under $\Pi_t$ are close in the $d_t$ metric, recalling that 
\[
d_t(v,w) = \max_{s\in [0,t]} \dK(f_s v, f_s w) = \max_{s\in [0,t+1]} d(\gamma_v(s),\gamma_w(s)) .
\]
\begin{prop}\label{prop:multiplicity}
For every $\eps>0$, there exists $C>0$ 
such that if $E_t \subset \Sing$ is a $(t,2\eps)$-separated set for some $t>0$, then for every $w\in T^1M$, we have $\#\{v\in E_t \mid d_t(w,\Pi_t v) < \eps\} \leq C$.
\end{prop}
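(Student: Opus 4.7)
The plan is a two-step reduction: first show that $\Pi_n$ moves each point by a uniformly bounded amount in the $d_n$ metric, then exploit non-positive curvature to reduce the counting of $(n,2\eps)$-separated preimages to a covering argument on footprints in $M$.

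For the first step, I claim $d_n(v,\Pi_n v) \leq R' := R(1 + e^\Lambda)$ for every $v \in \Sing$ and every $n$. Writing $v' := \Pi^s(v)$, we have $d^s(v,v') \leq R$, and since $d^{cs}$ is non-increasing along the flow on a common center-stable leaf, $d^{cs}(f_t v, f_t v') \leq R$ for all $t \in [0,n]$; then \eqref{eqn:dKdcs} gives $\dK(f_t v, f_t v') \leq R$. By construction $f_n \Pi_n v \in W^u_R(f_n v')$, so $d^u(f_n v', f_n \Pi_n v) \leq R$, and since $d^u$ is non-decreasing under the forward flow, $d^u(f_t v', f_t \Pi_n v) \leq R$ for all $t \in [0,n]$, whence $\dK(f_t v', f_t \Pi_n v) \leq e^\Lambda R$ by \eqref{eqn:dKdu}. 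The triangle inequality completes the step. In particular, if $\Pi_n v \in B_n(w,\eps)$ then $d_n(v,w) \leq D := R' + \eps$.

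For the second step, non-positive curvature identifies $d_n$ with a supremum of footprint distances. Indeed, unpacking the definition of $\dK$ gives
\[
d_n(u_1,u_2) = \sup_{r \in [0,n+1]} d(\gamma_{u_1}(r), \gamma_{u_2}(r)),
\]
and convexity of $r \mapsto d(\gamma_{u_1}(r),\gamma_{u_2}(r))$ on $[0,n+1]$ in non-positive curvature forces its maximum on this interval to be attained at an endpoint:
\[
d_n(u_1,u_2) = \max\bigl\{d(\pi u_1, \pi u_2),\ d(\pi f_{n+1} u_1, \pi f_{n+1} u_2)\bigr\}.
\]
Thus two orbit segments are $d_n$-close precisely when their initial footprints \emph{and} their terminal footprints are close in $M$. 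By compactness of $M$, cover each of $B(\pi w, D)$ and $B(\pi f_{n+1} w, D)$ by $N = N(M,D,\eps)$ balls of radius $\eps/2$. For each $v$ in the set we are trying to bound, record a label $(\alpha,\beta)$ where $\alpha$ indexes a cover element containing $\pi v$ and $\beta$ indexes one containing $\pi f_{n+1}v$. If two vectors $v_1,v_2$ share the same label, then both $d(\pi v_1,\pi v_2) < \eps$ and $d(\pi f_{n+1} v_1, \pi f_{n+1} v_2) < \eps$, whence $d_n(v_1,v_2) < \eps < 2\eps$, contradicting $(n,2\eps)$-separation. So at most one $v$ per label, giving $C := N^2 = C(M,R,\eps)$.

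The only mildly delicate point is the first step, which requires tracking leafwise distance estimates carefully; once it is in hand, the convexity simplification of $d_n$ makes the counting immediate. Notably, the hypothesis $E_n \subset \Sing$ plays no essential role — the same argument would yield the bound for any $(n,2\eps)$-separated subset of $T^1M$.
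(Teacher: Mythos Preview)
Your Step~1 is fine, and the overall architecture---reduce to endpoint control via convexity---is exactly the idea behind the paper's proof. The gap is in Step~2: the convexity you invoke fails on $M$.

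The function $r\mapsto d(\gamma_{u_1}(r),\gamma_{u_2}(r))$ is convex on the \emph{universal cover} $\widetilde M$ (a CAT(0) space), but not on the compact quotient. On $M$ the distance is an infimum over deck translates, which destroys convexity: already on a flat torus $\RR^2/\ZZ^2$, with $\gamma_{u_1}(r)=(r,0)$ and $\gamma_{u_2}(r)=(r,\beta r)$, the function $r\mapsto d(\gamma_{u_1}(r),\gamma_{u_2}(r))=\min_{k\in\ZZ}|\beta r-k|$ is a sawtooth. Consequently your implication ``same initial and terminal footprint labels $\Rightarrow d_n(v_1,v_2)<\eps$'' is false: two geodesic segments of length $n+1$ can start and end in the same $\eps$-balls of $M$ while lying in different homotopy classes and being far apart in the $d_n$ metric. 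The number of such homotopy classes grows with $n$, so your label count $N^2$ does not bound the set.

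The paper repairs exactly this point by lifting to $\widetilde M$: one fixes a fundamental domain $B$, lifts each $v\in E_n$ to $\widetilde v_B\in T^1B$, and uses convexity of $\widetilde d$ along lifted geodesics to conclude that coincidence of endpoint labels forces $d_n(v_1,v_2)<2\eps$. The extra bookkeeping is that the lift $\widetilde w$ of the reference point $w$ satisfying $\widetilde d_n(\widetilde w,\widetilde\Pi_n\widetilde v_B)<\eps$ is not uniquely determined---but your Step~1 (or the paper's $2R$ bound) confines it to one of finitely many translates $gB$, $g\in\Lambda$, independent of $n$. This finite ambiguity is the factor $\#\Lambda$ in the paper's constant $C=\#\Lambda\cdot\#X\cdot\#Y$. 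Your argument becomes correct once you lift and insert this deck-transformation count.
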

\begin{proof}
Let $\widetilde M$ be the universal cover of $M$ and $B\subset \widetilde M$ a fundamental domain.  Define $\widetilde \Pi^{s,u}$ and $\widetilde \Pi_t$ in the obvious way, by lifting $\Pi^{s,u}$ and $\Pi_t$ to the universal cover. We write $\widetilde d$ for the lift of the Riemannian metric $d$ to $\widetilde M$, and $\widetilde d_t$ for the lift of the metric $d_t$. Every $\widetilde v\in T^1\widetilde M$ has
\[
\widetilde d(\pi\widetilde v, \pi\widetilde\Pi_t \widetilde v) \leq
\widetilde d(\pi\widetilde v, \pi \widetilde\Pi^s\widetilde v) + \widetilde d(\pi \widetilde\Pi^s \widetilde v, \pi\widetilde\Pi_t\widetilde v) 
\leq d^s(v, \Pi^s v) + d^u(\Pi^s v, \Pi_t v) \leq 2R,
\]
recalling that $\pi \widetilde v\in \widetilde M$ is the footprint of $\widetilde v$.
Given $v\in T^1M$, let $\widetilde v_B\in T^1 \widetilde M$ be the lift of $v$ with $\pi\widetilde v_B\in B$; then we have $\pi\widetilde\Pi_t\widetilde v_B \in A_{2R} := \bigcup_{x\in B} B_{\widetilde d}(x,2R)$.

 Fix $\eps>0$ and let $\Gamma = \Gamma_{(2R+ \eps), B} :=\{g\in \pi_1(M) \mid gB \cap A_{2R+\eps} \neq\emptyset\}$.  Note that $\#\Gamma<\infty$ because $\bar{B}$ is compact.
For $t>0$, let $E_t \subset \Sing$ be any $(t,2\eps)$-separated set, and fix an arbitrary $w\in T^1 M$.  We define 
\[
E_t^{w,\eps} := \{v\in E_t \mid d_t(w,\Pi_t v)< \eps \}.
\]
Let $X\subset B$ and $Y\subset A_{2R+\eps}$ 
be finite $\eps$-dense sets.  We will show that $\#E_t^{w,\eps}\leq (\#\Gamma)(\#X)(\#Y) =: C$.


Since $d_t(w,\Pi_t v) < \eps$, there exists a lift $\widetilde w$ of $w$ with $\widetilde d_t(\widetilde w, \widetilde \Pi_t \widetilde v_B)<\eps$. It follows that $\pi\widetilde w \in A_{2R+\eps}$, and thus $\pi\widetilde w \in gB$ for some $g\in \Gamma$. Thus, $E_t = \bigcup_{g \in \Gamma} E^g_t$, where
\[
E_t^g = \{v \in E_t^{w, \eps} \mid \widetilde d_t(\widetilde w, \widetilde \Pi_t \widetilde v)< \eps \text{ where $\widetilde w$ is the lift of $w$ to } gB\}. 
\]
For a fixed $g\in \Gamma$  and $v\in E_t^g$, we approximate $\widetilde v_B$ and $f_t \widetilde v_B$ using the sets $X$ and $Y$. Recall that $\pi\widetilde v_B \in B$ by definition, and we will show that  the location of $f_t \widetilde v_B$ in $T^1 \widetilde M$ is controlled by using $f_t\widetilde w$ as a reference point.  Given $v\in E_t^g$, let $x=x(v)\in X$ be such that $\widetilde d(x,\pi\widetilde v_B)<\eps$. Let $h$ be the unique element of $\pi_1(M)$ so that $\pi f_t \widetilde w \in hB$. Then $\widetilde d(hB, \pi f_t(\widetilde \Pi_t \widetilde v_B))<\epsilon$, and thus $\pi f_t \widetilde v_B \in h(A_{2R+\eps})$. Thus, there is some $y=y(v)\in Y$ such that $\widetilde d(\pi f_t\widetilde v_B, h(y))< \eps$.

Now we show that the map $x\times y \colon E_t^g \to X\times Y$ is injective.  Given $v_1,v_2\in E_t^g$ and $s\in [0,t]$, let $\rho(s) = \widetilde d(\gamma_{\widetilde v_1}(s),\gamma_{\widetilde v_2}(s))$ and note that $\rho$ is convex. In particular, it takes its maximum value at an endpoint.  If $x(v_1)=x(v_2)$, then $\rho(0) < 2\eps$, and if $y(v_1)=y(v_2)$, then $\rho(t)<2 \eps$.  Thus if $v_1,v_2$ have the same image under $x\times y$, we get $\widetilde d_t((\widetilde v_1)_B,(\widetilde v_2)_B) < 2\eps$, and thus $d_t(v_1, v_2)< 2 \eps$. Since $E_t^g$ is $(t,2\eps)$-separated, this gives $v_1=v_2$. Injectivity shows that $\#E_t^g \leq (\#X)(\#Y)$ for every $g\in \Gamma$, which proves Proposition  \ref{prop:multiplicity} with $C = \#\Gamma\#X\#Y$.
\end{proof}

We now obtain a lower bound on partition sums for the singular set, beginning with the following general lemma.
\begin{lem} \label{gen:partitionsum}
Let $(X, \FFF)$ be a continuous flow on a compact metric space,
let $\ph: X \to \RR$ be continuous and let $\eps>0$. Then for all $t>0$, 
\begin{equation}\label{eqn:fatpartsum}
\sup
\left\{ \sum_{x\in E} e^{\sup_{y\in B_t(x,\eps)} \Phi(y, t)} \mid E\subset X \text{ is $(t,\epsilon)$-separated} \right\} \geq e^{t P(X, 2 \eps, \ph)}.
\end{equation}
\end{lem}
\begin{proof}
The argument is given in the proof of \cite[Lemmas 4.1 and 4.2]{CT4}. 
\end{proof}
The expression on the left hand side of \eqref{eqn:fatpartsum} is a `fattened up' version of a partition sum, and is easily seen to be within a multiplicative constant of $\Lambda(X, \eps, t, \ph)$ whenever $\ph$ has the Bowen property on $X$ at scale $\epsilon$. In particular, under the hypotheses of Theorem \ref{thm:pressure-gap}, the potential $\ph$ is locally constant on a neighborhood of $\Sing$, and so for sufficiently small $\eps$, the left hand side of \eqref{eqn:fatpartsum} is equal to $\Lambda(\Sing,\eps,t, \ph)$.
Moreover, the geodesic flow is entropy-expansive \cite[Proposition 3.3]{knieper98}, so for sufficiently small $\eps>0$, we have $P(\Sing,\ph) = P(\Sing,\ph, 2\eps)$ and 
Lemma \ref{gen:partitionsum} gives
\begin{equation}\label{eqn:Lambda-n}
\Lambda(\Sing,\eps,t,\ph) \geq e^{tP(\Sing, \ph)}.
\end{equation}

Fix $\eta_0>0$ as at the beginning of \S\ref{s:singreg}.  Fix $\eta\in (0, \eta_0)$ and choose $\delta>0$ small enough such that
$ \Lambda(\Sing,2\delta,t,\ph) \geq e^{tP(\Sing, \ph)}$ for every $t$, $\ph$ is locally constant on $B(\Sing,2\delta)$, and $\lambda(v) < \eta$ for all $v\in B(\Sing,2\delta)$. 

Let $U_1,\dots, U_k$ be the components of $B(\Sing,2\delta)$, and let $\Phi_i \in \RR$ be the constant value that $\ph$ takes on $U_i$.
By \eqref{eqn:Lambda-n}, for every $t$,  there exists a $(t,2\delta)$-separated set $E_t \subset \Sing$ such that 
\begin{equation}\label{eqn:En}
\sum_{i=1}^k e^{t\Phi_i} \#(E_t \cap U_i) \geq e^{tP(\Sing,\ph)}.
\end{equation}
We consider the image of $E_t$ under the map $\Pi_t$.  
Let $L=L(\eta,\delta)$ be as in Theorem \ref{thm:sing-to-reg}.  Write $E_t' = \Pi_t(E_t)$; then
\begin{equation}\label{eqn:ends-in-eta}
w,f_t(w)\in \Reg(\eta) \text{ for every } w\in E_t' \text{ with } t> 2L.
\end{equation}
Given $v\in E_t \cap U_i$, the third item of Theorem \ref{thm:sing-to-reg} shows that for any $u\in B_t(\Pi_tv,\delta)$, we have
\begin{equation}\label{eqn:En'}
\int_0^t \ph(f_s u)\,ds \geq (t-2L)\Phi_i - 2L\|\ph\| \geq t\Phi_i - 4L \|\ph\|.
\end{equation}
By Proposition \ref{prop:multiplicity}, for each $w\in E_t'$ there are at most $C=C(\delta)$ elements $w'\in E_t'$ with $d_t(w,w') < \delta$; this leads to the following lemma.

\begin{lem}\label{lem:En''}
Let $E_t \subset \Sing$ be a $(t,2\delta)$-separated set satisfying \eqref{eqn:En}, and let $E_t' = \Pi_t(E_t)$.  There exists a $(t,\delta)$-separated set $E_t'' \subset E_t'$ such that, setting $\beta = C^{-1} e^{-4L\|\ph\|}$, we have
\begin{equation}\label{eqn:En''}
\sum_{w\in E_t''} e^{\inf_{u\in B_t(w,\delta)} \int_0^t \ph(f_s u)\,ds} \geq \beta e^{tP(\Sing,\ph)}.
\end{equation}
\end{lem}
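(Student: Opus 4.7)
The plan is to let $E_n''$ be a maximal $(n,\delta)$-separated subset of $E_n'=\Pi_n(E_n)$ and then to transfer the partition bound \eqref{eqn:En} from $E_n$ to $E_n''$ using Proposition~\ref{prop:multiplicity} together with the geometric information in Theorem~\ref{thm:sing-to-reg}.

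By maximality, for each $v\in E_n$ there is some $w(v)\in E_n''$ with $d_n(\Pi_n v,w(v))\leq \delta$ (set $w(v)=\Pi_n v$ if $\Pi_n v\in E_n''$). Because $E_n$ is $(n,2\delta)$-separated, Proposition~\ref{prop:multiplicity}, applied at each $w\in E_n''$ (after a harmless small inflation of the scale to bridge the gap between the closed $\leq\delta$ coming from maximality and the strict inequality in Proposition~\ref{prop:multiplicity}), shows that $v\mapsto w(v)$ is at most $C$-to-$1$.

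The core geometric step is the uniform lower bound
\[
\int_0^n \ph(f_tu)\,dt \;\geq\; n\Phi_{i(v)} - 4T\|\ph\| \quad \text{for every } v\in E_n\cap U_{i(v)} \text{ and } u\in B_n(w(v),\delta).
\]
For such $v$ and $u$, the triangle inequality gives $\dK(f_tu,f_t\Pi_n v)\leq 2\delta$ on $[0,n]$, while Theorem~\ref{thm:sing-to-reg}\ref{nearSing} gives $\dK(f_t\Pi_n v,\Sing)<\delta$ on $[T,n-T]$. I would shrink $\delta$ at the outset so that $\ph$ is locally constant on the larger neighborhood $B(\Sing,4\delta)$ and that the components of $B(\Sing,4\delta)$ are pairwise separated by more than $2\delta$; both are possible by compactness of $T^1M$ and the local-constancy hypothesis. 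With this choice, the orbit segment $\{f_tu:t\in[T,n-T]\}$ is trapped inside the component of $B(\Sing,4\delta)$ that contains $U_{i(v)}$, by continuity in $t$ and Theorem~\ref{thm:sing-to-reg}\ref{component} applied to $\Pi_n v$. Thus $\ph(f_tu)=\Phi_{i(v)}$ for $t\in[T,n-T]$, giving the displayed estimate via $(n-2T)\Phi_{i(v)}-2T\|\ph\|\geq n\Phi_{i(v)}-4T\|\ph\|$.

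Exponentiating, summing over $v\in E_n$, grouping the resulting sum by the value of $w(v)\in E_n''$ (with multiplicity at most $C$), and invoking \eqref{eqn:En} yields
\[
C\sum_{w\in E_n''} e^{\inf_{u\in B_n(w,\delta)} \int_0^n \ph(f_tu)\,dt} \;\geq\; e^{-4T\|\ph\|}\sum_i e^{n\Phi_i}\#(E_n\cap U_i) \;\geq\; e^{-4T\|\ph\|}e^{nP(\Sing,\ph)},
\]
which rearranges to \eqref{eqn:En''} with $\beta=C^{-1}e^{-4T\|\ph\|}$. The main obstacle is the trapping argument: a $2\delta$-perturbation of $\Pi_n v$ in the $d_n$-metric must not carry the orbit across components of the local-constancy neighborhood of $\ph$. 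Arranging this forces careful coordination of the scales $\delta$, $2\delta$, and the separation between components of the relevant neighborhoods of $\Sing$; once this is pinned down, the rest is a clean counting argument.
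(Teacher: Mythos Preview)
Your argument is correct and close in spirit to the paper's, but the paper organizes the counting more efficiently. Instead of taking a single maximal $(n,\delta)$-separated subset of $E_n'$ and then assigning each $v\in E_n$ to a nearby $w(v)\in E_n''$, the paper first partitions by component: it sets $E_{n,i}'=\Pi_n(E_n\cap U_i)$ and takes a maximal $(n,\delta)$-separated subset $E_{n,i}''\subset E_{n,i}'$ of each piece, then lets $E_n''=\bigcup_i E_{n,i}''$. The payoff is that every $w\in E_{n,i}''$ is itself of the form $\Pi_n v$ for some $v\in E_n\cap U_i$, so the estimate \eqref{eqn:En'}---already recorded just before the lemma for $u\in B_n(\Pi_n v,\delta)$---applies directly, with no doubling of scale and no need for the additional trapping argument at scale $2\delta$ (or the enlargement of the local-constancy neighborhood to $B(\Sing,4\delta)$) that you correctly flag as the main obstacle in your approach. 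Your route works once the scales are coordinated as you describe; the paper's route sidesteps that bookkeeping entirely by building the component index into the construction of $E_n''$.
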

\begin{proof}
Given $1\leq i\leq k$, let $E_{t,i}' = \Pi_n(E_t \cap U_i)$ and take a maximal $(t,\delta)$-separated subset $E_{t,i}'' \subset E_{t,i}'$.  Now $\# E_t' \cap B_t(w,\delta) \leq C$ for all $w\in E_{t,i}''$ and $E_{t,i}''$ is $(t,\delta)$-spanning for $E_{t,i}'$, so $\#E_{t,i}'' \geq C^{-1} \#E_{t,i}'$.  Sum over $i$ and use \eqref{eqn:En} and \eqref{eqn:En'} to get \eqref{eqn:En''} for $E_t'' = \bigcup_{i=1}^k E_{t,i}''$.
\end{proof}

\subsection{Proof of Theorem \ref{thm:pressure-gap}: creating topological pressure} \label{s.pressureproduction}
We now carry out the scheme for producing topological pressure outlined at the start of \S\ref{s.entropygap}, which completes our proof of Theorem \ref{thm:pressure-gap}. We work with the scales $\eta,\delta>0$ fixed in the previous subsection. In particular, we recall that
\begin{equation}\label{eqn:delta-eta}
d(v,\Sing) < 2\delta \Rightarrow \lambda(v) < \eta,
\end{equation}
and $L=L(\delta, \eta)$ is the constant from Theorem \ref{thm:sing-to-reg}.
By Theorem \ref{specgeoflow}, we can choose $T=T(\delta, \eta)$ large enough so that the following specification property holds on $\mathcal{C}(\eta) = \{(v,t) \in T^1 M \times (0,\infty) \mid v,f_tv\in \Reg(\eta)\}$: for every  $\{(v_j,t_j)\}_{j=1}^k \subset \mathcal{C}(\eta)$ and every $T_1,T_2,\dots, T_k$ with the property that $T_{j+1} -T_j \geq  t_j + T$, there is $w\in T^1 M$ such that
\begin{equation}\label{eqn:specd}
f_{T_j}(w) \in B_{t_j}(v_j,\delta/3) \text{ for all } 1\leq j\leq k.
\end{equation}
Let $\alpha>0$ be small and $N\in \mathbb{N}$. We assume that $\alpha N\in \mathbb{N}$ for notational convenience so that we do not need to consider $\lfloor \alpha N \rfloor$ throughout our arguments.  
Let $\tau=2L+T$ and consider the set
\[
\mathcal{A} = \{\tau, 2\tau, 3\tau, \dots, (N-1)\tau\} \subset [0,N\tau].
\]
We select  $\alpha N-1$ of the $N-1$ elements in $\mathcal{A}$ as `gluing times' for our construction. We write $\mathbb{J}_N^\alpha = \{J\subset \mathcal{A} \mid \#J = \alpha N-1\}$ for the collection of all such possibilities, and note that $\#\mathbb{J}_N^\alpha = \binom{N-1}{\alpha N-1}$.

We now obtain estimates for a fixed choice of gluing times.   Given $J\in \mathbb{J}_n^\alpha$, we write $J = \{N_1\tau,\dots, N_{\alpha N - 1}\tau\}$. We set $N_0=0$ and $N_{\alpha N} = N$. Let $n_1 \tau,\dots, n_{\alpha N} \tau$ be the gaps between successive elements of $J$. Then $n_i = N_i- N_{i-1} \in \mathbb N$ for all $i \in \{1, \ldots, \alpha N\}$, and $N_j = \sum_{i=1}^j n_i$. 
 
 Fix $N,\alpha>0$ such that $\alpha N\in \mathbb{N}$.  Fix $J\in J_N^\alpha$ and let $n_1,\dots, n_{\alpha n}$ be as above. We consider the $(t, \delta)$ separated sets $E_t''$, recalling that $(x, t) \in C(\eta)$ for all $x\in E_{t}''$. Thus, writing $t_j = n_j\tau - T$, we can apply the specification property \eqref{eqn:specd} to elements of $E_{t_j}''$ with $T_j = N_{j-1}\tau$  so that
$T_{j+1}-T_j = (N_{j}-N_{j-1})\tau = n_j \tau=t_j+T$. We conclude that for every choice of $\mathbf{v} = (v_1,\dots, v_{\alpha N}) \in \prod_{j=1}^{\alpha N} E_{n_j\tau-T}''$, there exists $G(\mathbf{v}) = w\in T^1 M$ such that
\begin{equation}\label{eqn:specdd}
f_{N_{j-1}\tau}(w) \in B_{n_j\tau-T}(v_j, \delta/3) \text{ for all } 1\leq j\leq \alpha N.
\end{equation}
Let $\mathcal{X}_J$ be the set of all $w$ produced by the above procedure. That is, $\mathcal{X}_J$ is the image of the map $G: \prod_{j=1}^{\alpha N} E''_{n_j \tau - T} \to T^1M$.
\begin{lem}\label{lem:XJk-sep}
The set 
$\mathcal{X}_J$ is $(N\tau,\delta/3)$-separated.  
\end{lem}
\begin{proof}
Given $w^1\neq w^2 \in \mathcal{X}_J^\mathbf{k}$, there are $\mathbf{v}^1\neq\mathbf{v}^2 \in 
\prod_{j=1}^{\alpha N} E_{n_j\tau-T}''
$ such that $G(\mathbf{v}^i) = w^i$.  Each of the sets $E_{n_j\tau-T}''$ is $(n_j\tau-T,\delta)$-separated, so there exists $j\in \{1,\dots, \alpha N\}$ and $t\in [0,n_j\tau-T]$ with $d(f_t v^1_j, f_t v^2_j) \geq \delta$.  It follows from \eqref{eqn:specd}, writing $s=N_{j-1} \tau + t$, that
\[
d(f_sw^1,f_sw^2) \geq d(f_t v^1_j, f_t v^2_j) - d(f_t v^1_j, f_s w^1_j) - d(f_t v^2_j, f_s w^2)  > \delta/3.  \qedhere
\]
\end{proof}
We have the following control on when the orbit of $w\in \mathcal{X}_J$ is close to $\Sing$.

\begin{lem}
Every $w\in \mathcal{X}_J$ satisfies
\begin{equation}\label{eqn:JAJ}
\begin{aligned}
d(f_t w, \Sing) > 5\delta/3
&\text{ for all } t\in J,\\
d(f_t w,\Sing) < 4\delta/3
&\text{ for all } t\in \mathcal{A}\setminus J.
\end{aligned}
\end{equation}
\end{lem}
\begin{proof}
Let $w\in \mathcal{X}_J$. If $t=n\tau \in J$, then \eqref{eqn:specdd} gives $d(f_tw,v_j) < \delta/3$.  Since $v_j \in \Reg(\eta)$, it follows from \eqref{eqn:delta-eta} that $d(v_j,\Sing) \geq 2\delta$ and we conclude that $d(f_tw,\Sing) > 5\delta/3$.   

If $t=n\tau \in \mathcal{A} \setminus J$, there exists $j$ such that $N_{j-1} < n < N_j$. Writing $s=n\tau - N_{j-1} \tau$, we  have $d(f_t w, f_s v_j) < \delta/3$  by \eqref{eqn:specdd}. We have $s\in[L, t_j-L]$ since
\[
L \leq \tau \leq  s \leq (N_j-1) \tau - N_{j-1}\tau =n_j\tau -\tau < t_j-L.
\]
By (2) of Theorem \ref{thm:sing-to-reg}, $d(f_s v_j,\Sing) < \delta$, and so $d(f_tw, \Sing)< 4\delta /3$. 
\end{proof}
We use these lemmas and the estimate \eqref{eqn:En''} to obtain the following partition sum estimate for $\mathcal{X}_J$.

\begin{prop}\label{prop:XJ}
Let $Q =   T \|\ph\| - \log \beta$, where $\beta$ is the constant appearing in Lemma \ref{lem:En''}. Then for every $N,\alpha>0$ with $\alpha N \in \mathbb N$ and every $J\in \mathbb{J}_N^\alpha$, the $(N\tau,\delta/3)$-separated set $\mathcal{X}_J$ satisfies
\begin{equation}\label{eqn:sum-XJ}
\sum_{w\in \mathcal{X}_J} e^{\int_0^{N\tau} \ph(f_tw)\,dt}
\geq e^{-\alpha N Q} e^{N\tau P(\Sing,\ph)}.
\end{equation}
\end{prop}
\begin{proof}
The set $\mathcal{X}_J$ is $(N\tau,\delta/3)$-separated by Lemma \ref{lem:XJk-sep}.  We estimate $\int_0^{N\tau} \ph(f_tw)\,dt$ by breaking the integral over $[0,N\tau]$ into pieces corresponding to the intervals $[N_{j-1}\tau, N_j\tau - T]$, on which the orbit of $u$ is within $\delta/3$ of the orbit of $v_j$; the remaining pieces have a total length of $\alpha N T$, and so for $w=G(\mathbf{v})$, we obtain
\[
\int_0^{N\tau} \ph(f_tw)\,dt \geq -\alpha N T \|\ph\| + 
\sum_{j=1}^{\alpha N} \inf_{w\in B_{t_j}(v_j,2\delta/3)} \int_0^{t_j} \ph(f_t v_j)\,dt,
\]
where $t_j = n_j\tau - T$.
Using this together with the estimate \eqref{eqn:En''} gives 
\begin{multline*}
\sum_{w\in \mathcal{X}_J} e^{\int_0^{N\tau} \ph(f_t w)\,dt}
\geq
\sum_{\mathbf{v}} 
e^{-\alpha NT\|\ph\|}
\prod_{j=1}^{\alpha N} 
e^{\inf_{w\in B_{t_j}(v_j,\delta)} \int_0^{t_j} \ph(f_t v_j)\,dt} \\
 \geq e^{-\alpha NT\|\ph\|}
\prod_{j=1}^{\alpha N}
\beta e^{t_j P(\Sing,\ph)}
\geq e^{\alpha N(\log \beta - T\|\ph\|)} e^{N\tau P(\Sing,\ph)},
\end{multline*}
where the sum indexed by $\mathbf{v}$ is over $\prod_{j=1}^{\alpha N} E''_{n_j\tau - T}$.
\end{proof}

We now show as a consequence of Proposition \ref{prop:XJ} that different choices of gluing time data from $\mathbb{J}_N^\alpha$  lead to the construction of separated points. 

\begin{lem}\label{lem:JJ'}
If $J\neq J'\in \mathbb{J}_N^\alpha$ and $v\in \mathcal{X}_J$, $w\in \mathcal{X}_{J'}$, then there is $t\in [0,N\tau ]$ such that $d(f_tv,f_tw) \geq \delta/3$.
\end{lem}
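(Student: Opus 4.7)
The plan is to use the two properties \eqref{eqn:JAJ} from Proposition \ref{prop:XJ} that distinguish orbits shadowing the ``marker pattern'' $J$ from those shadowing $J'$. First I would observe that since $J \neq J'$ and $\#J = \#J' = \alpha N - 1$, there exists some $s \in \mathcal{A}$ lying in the symmetric difference; without loss of generality (swapping the roles of $J$ and $J'$, $v$ and $w$ if necessary) we may assume $s \in J \setminus J'$, i.e.\ $s \in J$ and $s \in \mathcal{A} \setminus J'$.

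Then from \eqref{eqn:JAJ} applied to $v \in \mathcal{X}_J$ at the marker $s \in J$, there exists some $t \in [s, s+T]$ with
\[
d(f_t v, \Sing) > 9\delta/5.
\]
For that same $t$, since $t \in [s, s+T]$ and $s \in \mathcal{A} \setminus J'$, the second alternative of \eqref{eqn:JAJ} applied to $w \in \mathcal{X}_{J'}$ yields
\[
d(f_t w, \Sing) < 6\delta/5.
\]
The triangle inequality $d(f_t v, \Sing) \leq d(f_t v, f_t w) + d(f_t w, \Sing)$ then gives
\[
d(f_t v, f_t w) \geq d(f_t v, \Sing) - d(f_t w, \Sing) > 9\delta/5 - 6\delta/5 = 3\delta/5 \geq \delta/5,
\]
which is the desired separation. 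Note $t \in [s, s+T] \subset [0, 4NT]$ since $s \in \mathcal{A} \subset [0, 4(N-1)T]$ and $T \leq 4T$, so $t$ lies in the required window.

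There is no real obstacle here: the entire content of the lemma is engineered into the gap between the two thresholds $9\delta/5$ and $6\delta/5$ in \eqref{eqn:JAJ}, whose whole purpose is to give a $3\delta/5$ margin that dominates the $\delta/5$ separation scale used in Lemma \ref{lem:XJk-sep}. The only thing to check is that the time $t$ witnessing one condition is admissible for the other, which is immediate because both conditions are stated over the same interval $[s, s+T]$.
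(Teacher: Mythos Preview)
Your proof is correct and follows essentially the same approach as the paper's: pick $s\in J\setminus J'$, use the first line of \eqref{eqn:JAJ} for $v$ and the second for $w$ on the common interval $[s,s+T]$, and conclude via the triangle inequality that $d(f_tv,f_tw)\geq 3\delta/5$. You are slightly more explicit about the WLOG step and the containment $t\in[0,4NT]$, but the argument is the same.
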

\begin{proof}
Since $J\neq J'$, there exists $t\in \mathcal{A}$ with $t\in J$ and $t\notin J'$.  By \eqref{eqn:JAJ}, we have 
$d(f_t v, \Sing) > 5\delta/3$
and $d(f_tw,\Sing) < 4\delta/3$, so $d(f_t v, f_tw) \geq \delta/3$.
\end{proof}

It follows immediately that the set $F_N := \bigcup_{J\in \mathbb{J}_N^\alpha} \mathcal{X}_J$ is $(N\tau ,\delta/3)$-separated.  Moreover, \eqref{eqn:sum-XJ} gives
\begin{equation}\label{eqn:sum-FN}
\sum_{w\in F_N} e^{\int_0^{N\tau } \ph(f_t w)\,dt}
\geq
\binom{N-1}{\alpha N - 1} e^{-\alpha NQ} e^{N\tau P(\Sing,\ph)}.
\end{equation}
Using the fact that $\frac{N - k}{\alpha N-k} \geq \frac 1\alpha$ for all $1\leq k < \alpha N$, we obtain the estimate
$\binom{N-1}{\alpha N - 1} = \prod_{k=1}^{\alpha N-1} \frac{N-k}{\alpha N-k} \geq (\frac 1\alpha)^{\alpha N - 1} \geq \alpha e^{(-\alpha \log \alpha) N}$, and so
\eqref{eqn:sum-FN} gives
\[
\Lambda(T^1 M,\ph,\delta/3,N\tau )
\geq 
\alpha e^{(-\alpha\log\alpha)N} 
e^{-\alpha NQ} e^{N\tau P(\Sing,\ph)}.
\]
Taking a logarithm, dividing by $N\tau $, and sending $N\to\infty$ gives
\begin{equation}\label{eqn:P-Psing}
P(\ph) \geq -\tfrac \alpha{\tau } \log\alpha -\tfrac{\alpha Q}{\tau } + P(\Sing,\ph).
\end{equation}
Recall that $\alpha$ is chosen independently from the constants $\tau$ and $Q$, so we can choose $\alpha$ with $\alpha<e^{-Q}$ to ensure that the right-hand side of \eqref{eqn:P-Psing} is greater than $P(\Sing,\ph)$. This completes the proof of Theorem \ref{thm:pressure-gap}.


\section{Proof of Theorems \ref{t.multiples}, \ref{t.geometric} and \ref{t.highergeometric}}\label{s.mainresults}

Now we apply Theorem~\ref{t.geodgeneral} to obtain 
Theorems~\ref{t.multiples},  \ref{t.geometric} and \ref{t.highergeometric}.
\begin{proof}[Proof of Theorem \ref{t.multiples}]
By Corollaries \ref{cor:h-b} and \ref{cor:bow}, if $\ph$ is H\"older continuous or $\ph= q \vg$, then it has the Bowen property on $\GGG(\eta)$ for all $\eta>0$. Then Theorem ~\ref{t.geodgeneral} applies, yielding the statement of Theorem \ref{t.multiples}.
\end{proof}

\begin{proof}[Proof of Theorem~\ref{t.geometric}]  
For surfaces, we have $h_{\mathrm{top}}(\sing)=0$ and $\vg(v)=0$ for all $v\in \sing$, so $P(\sing, q\vg)=0$ for all $q \in \RR$.  
We show that $P(q\vg)>0$ for all $q\in (-\infty, 1)$.  
Let $\psi^u$ be as in \eqref{eqn:psi-u}. Then
$\psi^u \geq \lambda^u\geq \lambda$. By Corollary \ref{c.singular}, for any invariant measure $\mu$ we have $\mu(\{v : \lambda(v)>0\})=\mu(\Reg)$, so it follows that $\int\psi^u\,d\mu>0$ whenever $\mu(\Reg)>0$.  In particular, the Liouville measure $\mu_L$ has 
$0 > \int \psi^u\,d\mu_L = \int\vg\,d\mu_L = -\int \lambda^+(\mu_L) = -h_{\mu_L}(\FFF)$ by Lemma \ref{c.equiv_pressure} and the Pesin entropy formula, so for every $q\in (-\infty,1)$ we have
\[
P(q\psi^u) \geq h_{\mu_L}(\mathcal{F}) + \int q \psi^u\, d\mu_L
> h_{\mu_L} + \int \psi^u\,d\mu_L = 0 = P(\Sing,q\psi^u).
\]
Then Theorem \ref{t.multiples} gives uniqueness and the desired properties for $\mu_q$.


Since the flow is entropy expansive, the entropy map is upper semi-continuous, and so by work of Walters \cite{pW92}, the function $q\mapsto P(q\vg)$ is $C^1$ on any interval where each $q\vg$ has a unique equilibrium state.  In particular, it is $C^1$ on $(-\infty,1)$.
\end{proof}

To show that the equilibrium states $\mu$ obtained in Theorem~\ref{t.geometric} are Bernoulli, we apply a result by Ledrappier, Lima, and Sarig \cite{LLS} showing that if $M$ is any 2-dimensional manifold, $\ph\colon T^1M\to \RR$ is H\"older or a scalar multiple of $\vg$, and $\mu$ is a positive entropy ergodic equilibrium measure for the geodesic flow on $T^1M$, then $\mu$ is Bernoulli. Although their result is stated for positive entropy measures, this assumption is only used to guarantee that the measure has a positive Lyapunov exponent, see \cite[Theorem 1.3]{LS}. Since our measure $\mu$ is hyperbolic by Corollary \ref{c.hyperbolic}, it follows that 
 \cite{LLS} applies.


We now prove Theorem \ref{t.highergeometric}, and investigate the pressure gap for the potentials $q\ph^u$ for higher dimensional manifolds.

\begin{proof}[Proof of Theorem \ref{t.highergeometric}]
For the proof of Theorem \ref{t.highergeometric}, first observe that given any continuous $\ph$, the set $\{q\in \RR : P(\Sing,q\ph) < P(q\ph)\}$ is open since both sides of the inequality vary continuously in $q$.  Then Theorem \ref{t.highergeometric} is a direct consequence of Theorems \ref{t.multiples} and \ref{thm:pressure-gap}.
\end{proof}

As
remarked after the statement of Theorem \ref{t.highergeometric}, if $M$ is a rank 1 manifold 
such that $\htop(\Sing)=0$, then we have $P(\Sing,q\vg)\leq 0$ for all $q\geq 0$ since $\vg\leq 0$. Thus, the argument in the proof of Theorem \ref{t.geometric} gives the pressure gap on $[0,1)$.  Since the gap is an open condition, it holds on $(-q_0,1)$ for some $q_0>0$. Finally, we show that the pressure gap holds under a bounded range condition.

\begin{lem}\label{cor:sup-inf}
Let $M$ be a closed rank 1 manifold and $\varphi\colon T^1M\to \RR$ be continuous. 
If 
\begin{equation} \label{bddrange}
\sup_{v\in \Sing}\varphi(v)-\inf_{v\in T^1M}\varphi(v) < \htop (\FFF) - \htop(\Sing),
\end{equation}
then $P(\Sing, \ph)< P(\ph)$.
\end{lem}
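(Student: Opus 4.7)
The plan is to derive the inequality $P(\Sing,\ph) < P(\ph)$ from two straightforward bounds on each side, one using the trivial upper bound on pressure coming from the sup of the potential, and the other using the variational principle with a measure of maximal entropy.

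First, I would bound $P(\Sing,\ph)$ from above. For the restricted flow $\FFF|_\Sing$, the variational principle gives
\[
P(\Sing,\ph) = \sup_{\mu \in \MMM(\FFF|_\Sing)} \Big( h_\mu(\FFF) + \int \ph \,d\mu \Big) \leq \htop(\Sing) + \sup_{v \in \Sing} \ph(v),
\]
since $\int \ph\,d\mu \leq \sup_\Sing \ph$ for any measure supported on $\Sing$, and the entropy of any such measure is at most $\htop(\Sing)$.

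Next, I would bound $P(\ph)$ from below. The entropy map on $\MMM(\FFF)$ is upper semicontinuous (the flow is entropy expansive, as noted in the paper), so a measure of maximal entropy $\nu$ exists with $h_\nu(\FFF) = \htop(\FFF)$. Applying the variational principle on the whole space to $\nu$,
\[
P(\ph) \geq h_\nu(\FFF) + \int \ph\,d\nu \geq \htop(\FFF) + \inf_{v \in T^1M} \ph(v).
\]

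Subtracting these two estimates gives
\[
P(\ph) - P(\Sing,\ph) \geq \big(\htop(\FFF) - \htop(\Sing)\big) - \Big(\sup_{v\in \Sing} \ph(v) - \inf_{v \in T^1M} \ph(v)\Big),
\]
which is strictly positive by hypothesis \eqref{bddrange}. There is no real obstacle here; the entire argument is a two-line application of the variational principle, and both ingredients (existence of a measure of maximal entropy via entropy expansivity, and the trivial sup bound on the restricted pressure) are already available in the paper.
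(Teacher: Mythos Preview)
Your proof is correct and follows essentially the same approach as the paper: bound $P(\Sing,\ph)$ above via the variational principle on $\Sing$, bound $P(\ph)$ below by plugging in a measure of maximal entropy, and combine using the hypothesis. The only cosmetic difference is that the paper first rewrites \eqref{bddrange} as $\htop(\Sing) + \sup_\Sing \ph < \htop(\FFF) + \inf \ph$ before applying the two bounds, whereas you subtract at the end.
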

If $\dim(M)=2$, then $\htop(\Sing)=0$, so the right hand side of \eqref{bddrange} is just $\htop(\FFF)$. If $\ph = q \vg$ or is H\"older,  the bounded range hypotheses \eqref{bddrange} gives another criterion which ensures that Theorem \ref{t.multiples} applies. In particular, it follows that the value of $q_0$ in Theorem \ref{t.highergeometric} can be taken with $q_0 \geq (\htop(\FFF)- \htop(\Sing))/2 \| \vg \|$.

\begin{proof}[Proof of Lemma \ref{cor:sup-inf}]
First rewrite \eqref{bddrange} as
\begin{equation}\label{eqn:bddrange2}
\htop(\Sing) + \sup_{v\in\Sing} \varphi(v)
< \htop(\mathcal{F}) + \inf_{v\in T^1M} \varphi(v).
\end{equation}
The variational principle for $\FFF|_\Sing$ gives
\[
P(\Sing,\ph) = \sup_{\nu\in \MMM(\FFF|_\Sing)} \bigg\{ h_\nu(\FFF) + \int\ph\,d\nu \bigg\} \leq \htop(\Sing) + \sup_{v\in \Sing}\ph(v).
\]
Now let $m$ be the measure of maximal entropy for $\FFF$. Then
\[
\htop(\mathcal{F}) + \inf \varphi  = h_m(\FFF) + \inf \varphi \leq h_m(\FFF) + \int \varphi \, dm \leq P(\varphi).
\]
Together with \eqref{eqn:bddrange2}, these give $P(\Sing,\varphi)<P(\varphi)$.
\end{proof}
We note that Gromov's example \cite[\S6]{knieper98} can be modified to make $\htop(\FFF)- \htop(\Sing)$ arbitrarily small, so there is no hope that \eqref{bddrange} yields a universal lower bound on $q_0$. We do not know if a small entropy gap restricts the value of $q_0$. Understanding this issue for the Gromov example would give insight into the general case.

\section{Examples} \label{s.examples}

In this section, we investigate examples of the geodesic flow on rank 1 manifolds with $\Sing\neq \emptyset$.  First,  we give a  class of 
manifolds for which we establish the existence of unique equilibrium states for a $C^0$-generic set of potential functions. This class includes any rank 1 surface equipped with an analytic metric. The second example is a modification of an example due to Heintze in which we establish the uniqueness of an equilibrium state for $q\vg$ for all $q\in \mathbb{R}$.

\subsection{Examples where pressure gap holds generically}
\label{sec:genericity}

We show that when the singular set is a finite union of disjoint compact sets, on each of which the geodesic flow is uniquely ergodic, then the set of H\"older potentials for which there is a pressure gap is $C^0$-generic.  

\begin{prop}\label{prop:C0-dense}
Suppose that $\Sing$ is a union of disjoint compact sets $Z_1,\dots, Z_k$, on each of which the geodesic flow is uniquely ergodic.  Let $H_0 \subset C(T^1 M)$ be the set of all $\psi$ that are constant on a neighborhood of each $Z_i$, and let $H \subset C(T^1M)$ be the set of all $\ph$ that are cohomologous to some $\psi\in H_0$.
Then $H$ is $C^0$-dense in $C(T^1M)$.
\end{prop}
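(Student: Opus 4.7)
The plan is to approximate $\varphi$ by modifying it by a $C^0$-small coboundary that renders it constant on a neighborhood of each $Z_i$. The key tool is the flow Birkhoff average
\[
\psi_T(v) := \frac{1}{T}\int_0^T \varphi(f_s v)\, ds,
\]
which is cohomologous to $\varphi$ for every $T>0$. One verifies this directly: setting $u_T(v) := \frac{1}{T}\int_0^T \int_0^s \varphi(f_\tau v)\, d\tau\, ds$, a change-of-variable computation shows that $\int_0^t (\psi_T - \varphi)(f_\sigma v)\, d\sigma = u_T(f_t v) - u_T(v)$ for all $v,t$. Since the geodesic flow restricted to each $Z_i$ is uniquely ergodic with unique invariant measure $\mu_i$, a standard consequence of unique ergodicity for continuous flows on compact spaces is that $\psi_T$ converges uniformly on each $Z_i$ to the constant $c_i := \int \varphi\, d\mu_i$ as $T\to\infty$.

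Given $\varepsilon>0$, I would first fix $T$ large enough that $\sup_{v\in Z_i}|\psi_T(v)-c_i|<\varepsilon/2$ for every $i$. Continuity of $\psi_T$ together with compactness and pairwise disjointness of the $Z_i$ then produces pairwise disjoint open neighborhoods $V_i\subset\overline{V_i}\subset U_i$ of $Z_i$ on which $|\psi_T-c_i|<\varepsilon$ still holds. Pick continuous bump functions $\phi_i\colon T^1M\to[0,1]$ with $\phi_i\equiv 1$ on $V_i$ and $\phi_i\equiv 0$ outside $U_i$, and set
\[
\psi := \psi_T + \sum_i \phi_i\,(c_i - \psi_T).
\]
Then $\psi\equiv c_i$ on the neighborhood $V_i$ of $Z_i$, so $\psi\in H_0$, and $\|\psi-\psi_T\|_\infty < \varepsilon$ by construction.

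To conclude, define $\tilde\varphi := \varphi + (\psi - \psi_T)$. Then $\|\tilde\varphi-\varphi\|_\infty = \|\psi-\psi_T\|_\infty < \varepsilon$, and since $\varphi-\psi_T = -V(u_T)$ is a coboundary, so is $\tilde\varphi - \psi$. Hence $\tilde\varphi$ is cohomologous to $\psi\in H_0$, i.e.\ $\tilde\varphi\in H$. This exhibits an element of $H$ within $\varepsilon$ of $\varphi$.

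The only steps that require genuine care are the verification that the continuous coboundary $u_T$ indeed conjugates the flow-Birkhoff average to $\varphi$, and the invocation of uniform Birkhoff convergence from unique ergodicity of $\FFF|_{Z_i}$; both are classical but should be written out cleanly. Beyond these, no significant obstacle arises, because the hyperbolic or geometric structure of the geodesic flow is irrelevant once the unique ergodicity hypothesis on the pieces of $\Sing$ is granted.
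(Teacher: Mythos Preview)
Your proof is correct and follows essentially the same approach as the paper: both use the Birkhoff average $\psi_T$ (cohomologous to $\ph$), invoke unique ergodicity on each $Z_i$ to make $\psi_T$ nearly constant there, then correct by a $C^0$-small function supported near $\Sing$ to land in $H_0$. Your transfer function $u_T$ is in fact equal to the paper's $\zeta$ (apply Fubini to $\int_0^T\int_0^s \ph(f_\tau v)\,d\tau\,ds$), and your bump-function construction just makes explicit what the paper asserts in one sentence.
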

\begin{proof}
Given $\ph\in C(T^1M)$ and $T>0$, consider the ergodic average function  $\ph_T(v) := \frac 1T \int_0^T \ph(f_s v)\,ds$.  Then $\ph$ and $\ph_T$ are cohomologous; indeed, writing
$\zeta(v) := \frac 1T \int_0^T (T-s) \ph(f_s v)\,dv$,
an elementary computation shows that the derivative of $\zeta$ in the flow direction is $\ph_T - \ph$.

Let $\mu_i$ be the unique invariant measure on $Z_i$, and write $c_i = \int\ph\,d\mu_i$.  Given $\eps>0$, there are $T_1,\dots, T_k$ such that for every $T\geq T_i$, we have $|\ph_T(v) - c_i| < \eps$ for every $v\in Z_i$.  Let $T=\max_i T_i$, and let $\psi = \ph_T$.

There is a function $\widetilde\psi\in H_0$ taking the value $c_i$ on a neighborhood of $Z_i$ and having $\|\widetilde \psi - \psi\|_{C^0} < \eps$.  Let $\widetilde \ph = \widetilde \psi + \ph - \psi$, then $\widetilde\ph$ is cohomologous to $\widetilde \psi$, so $\widetilde\ph\in H$, and we have $\|\widetilde\ph - \ph\|_{C^0} = \|\widetilde\psi - \psi\|_{C^0} < \eps$.
\end{proof}

Under the hypotheses of Proposition \ref{prop:C0-dense}, every $\ph\in H$ is cohomologous  to some $\psi\in H_0$ to which Theorem \ref{thm:pressure-gap} applies, giving
$P(\Sing,\ph) = P(\Sing,\psi) < P(\psi) = P(\ph)$.
Since $P(\ph)$ and $P(\Sing,\ph)$ vary continuously as $\ph$ varies (w.r.t.\ $C^0$), the set of potentials with the pressure gap is $C^0$-open, and since it contains $H$, it is $C^0$-dense.  
Writing $C^h$ for the space of H\"older potentials on $T^1M$, observe that $C^h$ is $C^0$-dense in $C(T^1M)$, so the intersection $H \cap C^h$ is $C^0$-dense in $C^h$. 
This shows that the set of H\"older potentials for which the pressure gap holds, which is clearly $C^0$-open in the space of H\"older potentials, is $C^0$-dense.

\subsubsection*{Analytic metrics on surfaces} 

For a rank 1 surface with an analytic metric, it is a folklore result that  $\Sing$ is a finite union of periodic orbits; we sketch the idea of proof. 
If $\Sing$ is a not a finite union of periodic orbits, then the geodesic flow has a transversal whose intersection with $\Sing$ is not discrete.  In particular, there is a geodesic segment $T$ and vectors $v_n, v\in \Sing$, $v_n\neq v$, such that $T$ is orthogonal to $v$, transverse to $v_n$, and $v_n \to v$. 
Locally, the geodesics $\gamma_{v_n}$ intersect $\gamma_v$ at most once, and so for almost all $t$ close to $0$, a short geodesic segment orthogonal to $f_t(v)$ contains a sequence of points $x_n$ such that $x_n\to \pi g_t(v)$ and the Gaussian curvature of $M$ at $x_n$ is $0$.  Since curvature is real analytic, it must vanish along each of these geodesic segments and hence it is constant in a neighborhood of $\pi(v)$; since $M$ is connected, it must vanish everywhere, which is a contradiction. 
Thus, $\Sing$ is a finite union of periodic orbits and so Proposition \ref{prop:C0-dense} applies.

\subsubsection*{Non-generic pressure gap and questions}
The pressure gap does not hold generically if the manifold has a flat strip. One can take a potential $\ph$ supported near a periodic trajectory in the middle of the strip; if the support is small enough and the size of the potential large enough, one can guarantee that any regular trajectory has an ergodic average much smaller than the average along the periodic orbit, and conclude that $P(\Sing,\ph) > P(\Reg,\ph)$.  This inequality is stable under $C^0$-perturbations of $\ph$, so there is a $C^0$-open set of potentials $\psi$ for which $P(\Sing,\psi) = P(\psi)$.  It would be interesting to further investigate which classes of rank $1$ manifolds have the pressure gap for $C^0$-generic (or $C^\alpha$-generic) H\"older potentials.

\subsection{Heintze example}\label{sec:heintze}

The following example of a rank 1 manifold is attributed to Heintze and described by 
Ballmann, Brin, and Eberlein \cite[Example 4]{BBE}.  Consider an $n$-dimensional manifold $N$ of constant negative curvature and finite volume with only one cusp.  The cross section of the cusp is a flat $(n-1)$-dimensional torus $T$.
Next cut off the cusp and flatten the manifold near the cut so the resulting manifold is locally isometric to the direct product of $T$ and the unit interval.  Now consider another copy of the same manifold and identify the two copies along $T$ to obtain a manifold $M$ with nonpositive sectional curvature. 
The rank of any tangent vector to a geodesic in $T$ is $n$; however, any tangent vector to a geodesic transverse to $T$ has rank $1$.

\begin{figure}[htbp]
\includegraphics[width=.5\textwidth]{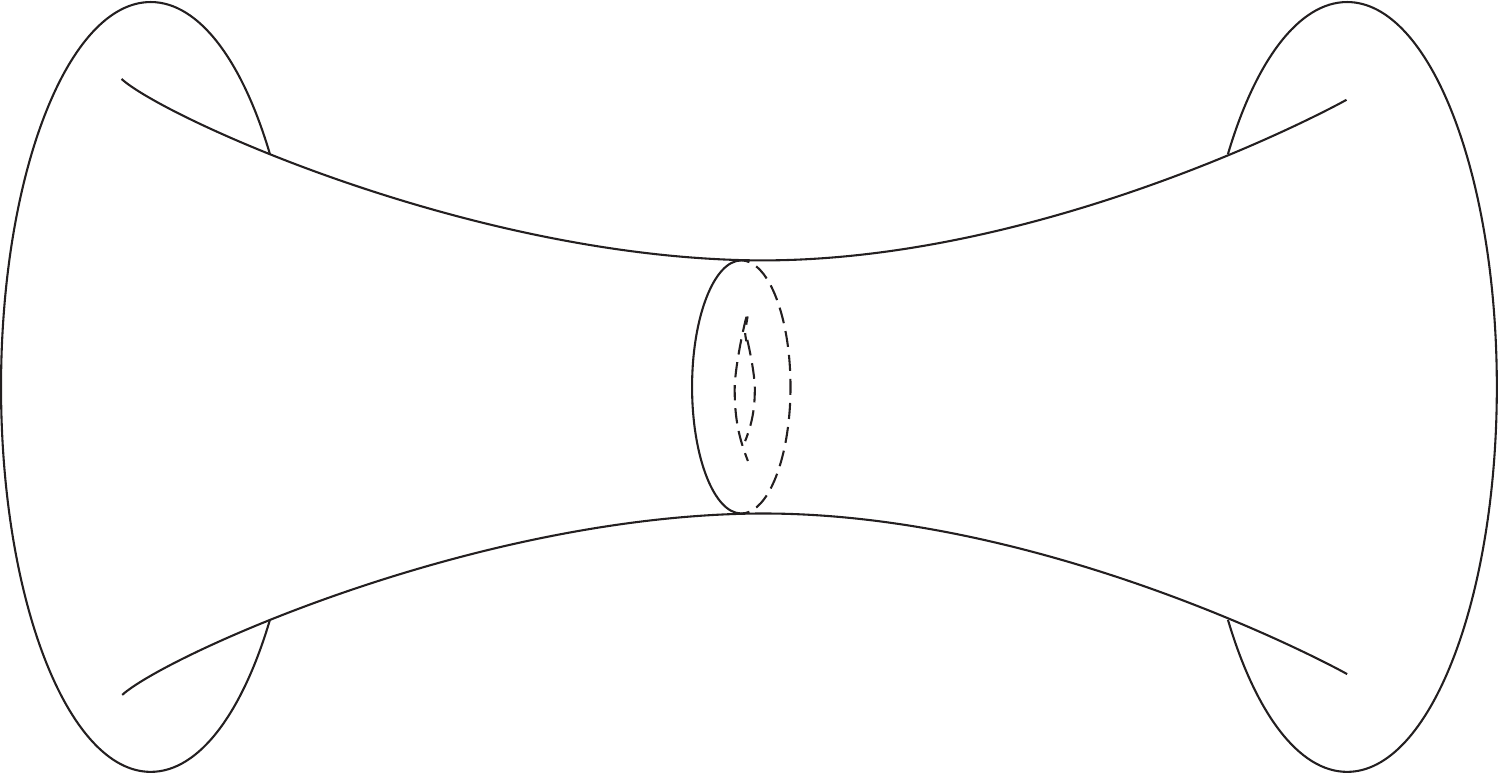}
\caption{Modified Heintze's example}\label{f.mheintze}
\end{figure}

We now modify this example.  We first assume for simplicity that $n=3$ and the cross-section is a 2-torus $T$.  
%
Start by perturbing the metric on a compact subset of $N$ so that there are two periodic orbits for which the corresponding invariant measures $\mu_1$ and $\mu_2$ have $\int\vg\,d\mu_1 < \int\vg\,d\mu_2$.  Then choose $T$ as above; by choosing $T$ to lie far enough out in the cusp we can guarantee that curvature is still constant in a neighborhood of $T$.  Instead of flattening the metric near the cut, replace the direct product metric on $T\times [0,1]$ with a warped product, see for instance \cite[p. 204]{Oneill} in which the tori are scaled by $\chi \cosh(d)$ where $d$ is the distance from the center 2-torus and $\chi>0$.
The fact that $\cosh(d)$ has a minimum at $d = 0$ means that the central 2-torus is totally geodesic; all of the vectors tangent to it are singular and have exponent zero in the direction tangent to the 2-torus. However now the sectional curvature in the direction orthogonal to the central 2-torus is negative and gives a nonzero Lyapunov exponent, see Figure \ref{f.mheintze}.

For this modified example, $\Sing$ consists of vectors tangent to the center 2-torus.  Every $v\in \Sing$ has zero Lyapunov exponent in the center direction, but the other Lyapunov exponents are nonzero since the sectional curvature corresponding to these directions is nonzero.  By the warped product construction every vector in $\Sing$ will have the same positive Lyapunov exponent $\lambda>0$, and since $h(\Sing)=0$, we know
$P(\Sing, q\ph^u) = -q \lambda$
for all $q\in \RR$.  
By varying the parameter $\chi$ in the construction, we can vary $\lambda$ so that $\int\vg\,d\mu_1 < -\lambda < \int\vg\,d\mu_2$.
Thus for all $q>0$ we have
\[
P(q\vg) \geq q\int\vg\,d\mu_2 > -q\lambda = P(\Sing,q\vg),
\]
and the corresponding inequality for $q<0$ follows by considering $\mu_1$.

Finally, since $\htop(\mathcal{F})=P(0)>0$ we see that $P(q\vg)> P(\Sing, q\vg)$ for all $q\in \RR$. 
Thus we have an example of a compact smooth 3-manifold $M$ that is rank $1$ of nonpositive curvature for which $\Sing \neq \emptyset$, and indeed $M$ does not support a metric of strictly negative curvature (since $\pi_1(M)$ contains $\ZZ^2$), but on the other hand $q\ph^u$ has a unique equilibrium state for every $q\in \RR$, which is fully supported. In particular, the Liouville measure is the unique equilibrium state for $\ph^u$.

\subsection*{Acknowledgments} We would like to thank the anonymous referees for their helpful comments which have benefited this article. Much of this work was carried out in a SQuaRE program at the American Institute of Mathematics. We thank AIM for their support and hospitality.

\bibliographystyle{amsplain}
\bibliography{bcft-references}

\end{document}